\theoremstyle{plain}
\newtheorem{theorem}{Theorem}
\newtheorem*{theorem-nn}{Theorem}
\newtheorem{conjecture}{Conjecture}
\newtheorem{corollary}{Corollary}[theorem]
\newtheorem{defi}{Definition}
\newtheorem{remark}{Remark}
\newtheorem{claim}{Claim}[theorem]
\newtheorem{lemma}[theorem]{Lemma}
\newtheorem{proposition}[theorem]{Proposition}
\newcommand{\N}{\mathbb{N}} 
\newcommand{\D}{{\mathcal{D}}}
\newcommand{\cv}{{\rm{cv}}}
\newcommand{\aux}{{\rm{aux}}}
\newcommand{\tr}{{\rm{tr}}}
\newcommand{\hang}{{\rm{hang}}}
\newcommand{\bcv}{{\mathbf\rm{cv}}}
\newcommand{\btr}{{\mathbf\rm{tr}}} 
\tikzset{emp/.style={double distance = 0.3ex}}
\tikzset{M edge/.style={line width=1.3pt,double distance=1.1pt}}
\tikzset{F1 edge/.style={line width=1.3,color=red,->}}
\tikzset{F2 edge/.style={line width=1.3,color=blue,->}}
\tikzset{E edge/.style={line width=1.3,color=black,-}}
\tikzset{squared black vertex/.style={draw,minimum size=2mm,inner sep=0pt,outer sep=3pt,fill=black, color=black}}
\tikzset{red vertex/.style={circle,draw,minimum size=2mm,inner sep=0pt,outer sep=4pt,fill=red, color=red}}
\tikzset{blue vertex/.style={circle,draw,minimum size=2mm,inner sep=0pt,outer sep=4pt,fill=blue, color=blue}}
\tikzset{black vertex/.style={circle,draw,minimum size=2mm,inner sep=0pt,outer sep=3pt,fill=black, color=black}}
\tikzset{white vertex/.style={circle,draw,minimum size=2mm,inner sep=0pt,outer sep=3pt, color=black}}
\tikzset{fatpath/.style={line width=9pt,rounded corners=.1mm}}
\tikzstyle{edge}=[line width=1.3]
\tikzstyle{color1}=[color=blue] 
\tikzstyle{color2}=[color=red]
\tikzstyle{color3}=[color=green] 
\tikzstyle{color4}=[fill=yellow]
\tikzstyle{color5}=[ dashed] 
\tikzstyle{backcolor1}=[color=gray!55!white] 
\tikzstyle{backcolor2}=[color=blue!35!white] 
\tikzstyle{fucking loosely dotted}=[dash pattern=on \pgflinewidth off 6pt]
\tikzstyle{nonedge}=[color=red,fucking loosely dotted]
\tikzstyle{possible edge}=[edge, dashed]
\tikzstyle{snake}=[decorate, decoration=snake, segment length=1cm]
\tikzstyle{short snake}=[decorate, decoration=snake, segment length=7mm]
\tikzstyle{long snake}=[decorate, decoration=snake, segment length=11mm]
\colorlet{setfilling}{green!5!white}
\colorlet{setborder}{gray}
\author{Fábio Botler \hspace{1cm} Luiz Hoffmann\\
	{\footnotesize Programa de Engenharia de Sistemas e Computação}\vspace{-.2cm}\\
	{\footnotesize Universidade Federal do Rio de Janeiro}
	\footnote{{E-mails: fbotler@cos.ufrj.br (F. Botler), hoffmann@cos.ufrj.br (L. Hoffmann).}}}
\title{Decomposition of $(2k+1)$-regular graphs containing special spanning \(2k\)-regular Cayley graphs into paths of length~$2k+1$}
\begin{document}
	\maketitle

\begin{abstract}
		A \(P_\ell\)-decomposition of a graph \(G\) is a set of paths with \(\ell\) edges in \(G\) 
		that cover the edge set of \(G\).
		Favaron, Genest, and Kouider (2010) 
		conjectured that every \((2k+1)\)-regular graph that contains a perfect matching
		admits a {\(P_{2k+1}\)-decomposition}.
		They also verified this conjecture for \(5\)-regular graphs without cycles of length \(4\).
		In 2015, Botler, Mota, and Wakabayashi verified this conjecture 
		for \(5\)-regular graphs without triangles.
		In this paper, we verify it for 
		{\((2k+1)\)-regular} graphs that contain the \(k\)th power of a spanning cycle;
		and for \(5\)-regular graphs that contain
		special spanning \(4\)-regular Cayley graphs.
		
		\smallskip
		\noindent\textbf{keywords:} Decomposition, Path, Regular graph, Cayley graph.
\end{abstract}

\section{Introduction}\label{sec:introduction}
	All graphs in this paper are simple, i.e., 
	have no loops nor multiple edges.
	A \emph{decomposition} of a graph \(G\) is a set \(\D\) 
	of edge-disjoint subgraphs of \(G\) that cover its edge set.
	If every element of \(\D\) is isomorphic to a fixed graph \(H\),
	then we say that~\(\D\) is an \emph{\(H\)-decomposition}.
	In this paper, we focus on the case \(H\) 
	is the simple path with \(2k+1\) edges, which we denote by \(P_{2k+1}\).
	Note that this notation is not standard.
	In 1957, Kotzig~\cite{Ko57} (see also~\cite{BoFo83}) 
	proved that a \(3\)-regular graph \(G\)
	admits a \(P_3\)-decomposition if and only if \(G\) contains a perfect matching.
	In 2010, Favaron, Genest, and Kouider~\cite{FaGeKo10} extended this result by proving 
	that every \(5\)-regular graph that
	contains a perfect matching and no cycles of length \(4\)
	admits a \(P_5\)-decomposition;
	and proposed the following conjecture to 
	extend Kotzig's result.
	\begin{conjecture}[Favaron--Genest--Kouider, 2010]\label{conj:favaron}
		If \(G\) is a \((2k+1)\)-regular graph that contains a perfect matching,
		then \(G\) admits a \(P_{2k+1}\)-decomposition.
	\end{conjecture}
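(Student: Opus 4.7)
The plan is to attack the general conjecture via an extremal partial decomposition together with exchange operations driven by the perfect matching. Let \(\mathcal{D}\) be a collection of edge-disjoint copies of \(P_{2k+1}\) in \(G\) chosen to maximize the number of covered edges, and, subject to that, to maximize a secondary weight rewarding paths whose two endpoints form an edge of the given perfect matching \(M\). Write \(F\) for the set of uncovered edges. If \(F=\emptyset\) we are done, so assume \(F\neq\emptyset\) and aim for a contradiction by producing a modification of \(\mathcal{D}\) that strictly increases the primary or secondary weight.

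First I would extract the parity information forced on \(F\). Every path in \(\mathcal{D}\) contributes \(2\) to the covered degree at each internal vertex and \(1\) at each endpoint, so at every vertex \(v\) the uncovered degree \(d_F(v)\) has the same parity as \((2k+1)-t(v)\), where \(t(v)\) is the number of paths of \(\mathcal{D}\) using \(v\) as an endpoint. Summing gives \(\sum_v t(v)=2|\mathcal{D}|\) and constrains how \(F\)-edges distribute; in particular, vertices with \(d_F(v)=0\) must be endpoints of many paths, yielding a rich supply of candidate exchange targets.

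The workhorse is the \emph{shift} operation: given \(P=v_0v_1\cdots v_{2k+1}\in\mathcal{D}\) and an edge \(e=v_0w\in F\) with \(w\notin V(P)\), replace \(P\) by \(P'=wv_0v_1\cdots v_{2k}\); this moves the leftover status from \(e\) to \(v_{2k}v_{2k+1}\). Iterating shifts from both endpoints defines a reachability relation on \(F\), and the crucial role of the hypothesis is that the \(n/2\) edges of \(M\) supply a global pool of closing edges: if the shift-orbit of some \(e_1\in F\) reaches an edge \(e_2\in F\) whose relative position with respect to \(M\) allows the two shift-generated paths to be concatenated through a matching edge, the resulting walk of length at least \(2(2k+1)\) can be rerouted as two paths that absorb one extra edge, contradicting the choice of \(\mathcal{D}\).

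The main obstacle, and the reason Conjecture~\ref{conj:favaron} is still open in full generality, is proving that such a closing move always exists. Shift-orbits are tree-like and can become trapped: a prospective shift may collide with an edge already used by another path of \(\mathcal{D}\) (the condition \(w\notin V(P)\) above), and it is non-trivial to exclude configurations in which every attempted closing collides. I would try to resolve this via a structural dichotomy: either the extremal \(\mathcal{D}\) admits a valid shift-and-close sequence (in which case maximality is violated), or the rigidity of the collisions forces a highly symmetric substructure spanning \(G\) — for example, a spanning \(k\)th power of a cycle or a special \(2k\)-regular Cayley subgraph — which is precisely the setting in which the paper's main theorems already verify the conjecture. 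Closing this dichotomy gap between "flexible, so exchange works" and "rigid, so handled directly" is the hard part and the main place where new ideas are needed.
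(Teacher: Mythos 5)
The statement you are asked about is a \emph{conjecture} (Favaron--Genest--Kouider, 2010): the paper does not prove it, and it remains open in general. The paper only verifies it for two special families --- \((2k+1)\)-regular graphs containing the \(k\)th power of a spanning cycle, and \(5\)-regular graphs containing a spanning simply commutative \(4\)-regular Cayley graph --- and in both cases the special structure is a \emph{hypothesis}, not something derived. Your proposal is therefore not comparable to "the paper's proof"; it is an attempt at the open problem, and it is not a proof: you yourself flag that the closing step of your dichotomy is missing, and that admission is accurate. There is no argument offered for why a failure of every shift-and-close sequence would force a spanning power of a cycle or a Cayley substructure; the rigidity half of your dichotomy is pure speculation, and nothing in the paper supports it (the paper's exchange arguments lean heavily on the group structure of the \emph{given} Cayley subgraph, e.g.\ identities like \(2g+2r\neq 0\), which have no analogue in an arbitrary graph).

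There is also a concrete arithmetic problem with your exchange step. If \(\mathcal{D}\) is an edge-disjoint family of copies of \(P_{2k+1}\) maximizing the number of covered edges, then the number of covered edges is a multiple of \(2k+1\), and since \(|E(G)|=n(2k+1)/2\) is itself a multiple of \(2k+1\), the uncovered set \(F\) has size divisible by \(2k+1\). Consequently "absorbing one extra edge" cannot contradict maximality: concatenating two paths of length \(2k+1\) through one matching edge yields \(4k+3\) edges, which cannot be repartitioned into paths of length \(2k+1\). To make progress you must either absorb \(2k+1\) new edges simultaneously, or switch to the framework the paper actually uses in its special cases: start from a decomposition of \emph{all} edges into trails of length \(2k+1\) centered on the matching (this always exists, via Petersen's theorem and an Euler-type orientation of the \(2k\)-factor) and minimize the number of trails that are not paths, preserving an invariant under local exchanges. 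That reformulation fixes the counting issue but leaves the genuinely hard part untouched: controlling the collisions so that some exchange always reduces the number of bad trails, which is exactly where the paper needs the cyclic or commutative group structure and where the general conjecture remains open.
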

	
	In 2015, Botler, Mota, and Wakabayashi~\cite{BoMoWa15}
	verified Conjecture~\ref{conj:favaron} for triangle-free \hbox{\(5\)-regular},
	and, more recently, Botler, Mota, Oshiro, and Wakabayashi~\cite{BoMoOsWa17}
	generalized this result
	for \((2k+1)\)-regular graphs with girth at least \(2k\).
	
	It is clear that a \(5\)-regular graph contains a perfect matching
	if and only if it contains a spanning \(4\)-regular graph.
	In fact, by using a theorem of Petersen~\cite{Pe1891},
	one can prove that a \((2k+1)\)-regular graph contains a perfect matching
	if and only if it contains a spanning \(2k'\)-regular graph for every \(k'\leq k\).
	\begin{theorem}[Petersen, 1891]\label{theorem:petersen}
		If \(G\) is a \(2k\)-regular graph,
		then \(G\) admits a decomposition into spanning \(2\)-regular graphs.
	\end{theorem}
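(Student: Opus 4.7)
The plan is to reduce Petersen's theorem to the well-known fact that every \(k\)-regular bipartite graph admits a decomposition into \(k\) perfect matchings, using an Eulerian orientation as a bridge. First I would reduce to the case in which \(G\) is connected, since a decomposition of each component yields a decomposition of \(G\) by taking unions component by component.

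Assuming \(G\) is connected, every vertex of \(G\) has even degree \(2k\), so \(G\) admits an Eulerian circuit \(C\). Traversing \(C\) induces an orientation \(\vec{G}\) in which every vertex \(v\) has in-degree and out-degree exactly \(k\) (each visit of \(C\) enters \(v\) once through some incident edge and leaves through another). From \(\vec{G}\) I would build an auxiliary bipartite graph \(H\) with parts \(V^+=\{v^+: v\in V(G)\}\) and \(V^-=\{v^-: v\in V(G)\}\), adding the edge \(u^+v^-\) to \(H\) for each arc \(u\to v\) of \(\vec{G}\). Since each \(v\) has out-degree \(k\) (resp.\ in-degree \(k\)) in \(\vec{G}\), the vertex \(v^+\) (resp.\ \(v^-\)) has degree \(k\) in \(H\), so \(H\) is a \(k\)-regular bipartite graph.

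The classical corollary of Hall's theorem (or equivalently König's edge-colouring theorem for bipartite graphs) says that every \(k\)-regular bipartite graph decomposes into \(k\) perfect matchings \(M_1,\dots,M_k\). I would pull each \(M_i\) back to \(G\) as follows: for each edge \(u^+v^-\in M_i\), include the underlying edge \(uv\) of \(G\) in the subgraph \(G_i\). Because \(M_i\) covers each \(v^+\) exactly once and each \(v^-\) exactly once, the vertex \(v\) has exactly one outgoing and one incoming arc in \(G_i\); forgetting the orientation, \(v\) has degree \(2\) in \(G_i\). Since the \(M_i\) partition \(E(H)\) and each edge of \(G\) contributes a unique edge of \(H\), the \(G_i\) partition \(E(G)\), giving the required decomposition into spanning \(2\)-regular subgraphs.

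Since this is a classical result, there is no deep obstacle: the only point requiring a moment of care is ensuring that the bipartite translation is bijective on edges (which follows because \(G\) is simple, so every arc of \(\vec{G}\) yields a distinct edge of \(H\)) and that each \(G_i\) is spanning, which is immediate from \(M_i\) being a perfect matching of \(H\).
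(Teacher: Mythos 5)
Your proposal is correct: the reduction to connected components, the Eulerian orientation giving in- and out-degree \(k\) at every vertex, the auxiliary \(k\)-regular bipartite graph, and the pull-back of the \(k\) perfect matchings (guaranteed by K\H{o}nig's theorem) to spanning \(2\)-regular subgraphs is the standard textbook proof of Petersen's \(2\)-factor theorem, and the points you flag (no loops or parallel arcs since \(G\) is simple, so each matching really yields a \(2\)-regular spanning subgraph) are exactly the right ones to check. The paper does not prove this statement at all -- it is quoted as a classical result with a citation to Petersen (1891) -- so there is no proof in the paper to compare against; your argument is a valid self-contained justification of the cited theorem.
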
	
	In this paper, we explore Conjecture~\ref{conj:favaron}
	for \((2k+1)\)-regular graphs that contain special spanning \(2k\)-regular graphs as follows.
	Throughout the text, \(\Gamma\) denotes a finite group of order \(n\);
	\(+\) denotes the group operation of~\(\Gamma\); and \(0\) denotes the identity of \(\Gamma\).
	As usual, for each \(x\in\Gamma\), we denote by \(-x\) the \emph{inverse} of $x$,
	i.e., the element \(y\in\Gamma\) for which \(x+y=0\),
	and the operation \(-\) denotes the default binary operation \((x,y)\mapsto x+(-y)\). 
	Let \(S\subseteq\Gamma\) be a set not containing \(0\), and such that \(-x\in S\) for every \(x\in S\) (i.e., \(S\) is closed under taking inverses).
	The \emph{Cayley graph} \(X(\Gamma,S)\) is the graph \(H\) with \(V(H)=\Gamma\),
	and \(E(H) = \big\{xy\colon y-x\in S\big\}\) (see~\cite{godsil13}).
	In this paper, we allow \(S\) to be a set not generating~\(\Gamma\),
	and hence \(X(\Gamma,S)\) is not necessarily connected.	
	We say that \(H\) is \emph{simply commutative}
	if (i) \(x + y = y + x\) for every \(x,y\in S\), 
	and (ii) \(-x\neq x\) for every \(x\in S\).
	Condition (ii) implies that \(H\) has no multiple edges, and, since $0\notin S$, $H$ is simple.
	It is not hard to check that, in such a graph, 
	the neighborhood of a vertex \(v\in\Gamma\)
	is \(N(v) = \{v+x\colon x\in S\}\).
	Although the definition of Cayley graphs can be extended to multigraphs 
	and directed graphs, 
	Conjecture~\ref{conj:favaron} considers only simple graphs.
	In fact, we explore some structure of the colored directed Cayley graph (see \cite{Ca78}) in which the edge set consists of the pairs $(x,x+s)$ with color $s\in S$.
	
	We present two results regarding Conjecture~\ref{conj:favaron}.
	We verify it for \((2k+1)\)-regular graphs
	that contain the \(k\)th power of a spanning cycle (see Section~\ref{sec:powers-of-cycles});
	and for {\(5\)-regular} graphs that contain 
	spanning simply commutative \(4\)-regular Cayley graphs 
	(see Section~\ref{sec:5-regular}).
	Since the graphs in these classes may contain cycles of lengths \(3\) and \(4\),
	these results extend the family of graphs for which 
	Conjecture~\ref{conj:favaron} is known to hold.
	
	We believe that, due to the underlying group structure, the techniques developed here can be extended 
	for dealing with \((2k+1)\)-regular graphs that contain 
	more general spanning Cayley graphs, 
	and also \((2k+1)\)-regular graphs that contain 
	special spanning Schreier graphs,
	which could give us significant insight with respect to the general case of Conjecture~\ref{conj:favaron} (see Section~\ref{sec:concluding}).
	
	\bigskip	\noindent
	\textbf{Notation.}
	A graph $T$ is a \emph{trail} if there is a sequence $x_0,\ldots, x_{\ell}$ of its vertices for which $E(T)=\{x_{i}x_{i+1}\colon 0\leq i \leq \ell-1\}$ and $x_{i}x_{i+1}\neq x_{j}x_{j+1}$, for every $i\neq j$. Moreover,
	if $x_i\neq x_j$ for every $i \neq j$, we say that $T$ is a \emph{path}.
	A subgraph $F$ of a graph $G$ is a \emph{factor} of $G$ or
	a \emph{spanning subgraph} of $G$, if $V(F)=V(G)$. 
	If, additionally, \(F\) is $r$-regular, then we say that $F$ is an \emph{$r$-factor}.
	In particular, a perfect matching is the edge set of a \(1\)-factor.
	Moreover, we say that $F$ is an \emph{$H$-factor} if $F$ is a factor that consists of vertex-disjoints copies of $H$.
	The reader may refer to \cite{bondy1976graph} for standard definitions of graph theory.

	\section{Regular graphs that contain powers of cycles}\label{sec:powers-of-cycles}

	Given a perfect matching $M$ in a graph $G$, we say that a $P_{\ell}$-decomposition $\D$ of a graph $G$ is
	$M$-\emph{centered} if for every  
	$P=a_0a_1\cdots a_{\ell-1}a_{\ell} \in \D$,
	we have $a_{i}a_{i+1} \in M$ for $i=(\ell-1)/2$,
	i.e., if the middle edges of the paths in \(\D\) are precisely the edges of \(M\).
	The next results are examples of $M$-centered decomposition
	that are used in the proof of Theorems~\ref{theorem:no-g2r2} and~\ref{theorem:2g2r!=0}.
	
	\begin{proposition}\label{proposition:K44}
		If \(G\) is a \(5\)-regular graph that contains a spanning copy \(K\) of $K_{4,4}$,
		and \({M = E(G)\setminus E(K)}\), then \(G\) admits an $M$-centered \(P_5\)-decomposition.
	\end{proposition}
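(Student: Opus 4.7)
The plan is to reduce the proposition to a single graph up to isomorphism, and then exhibit an explicit $M$-centered decomposition on it.

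Since $K\simeq K_{4,4}$ is spanning, $|V(G)|=8$ and $|E(K)|=16$, so $|M|=|E(G)|-|E(K)|=20-16=4$. Each vertex has degree $5$ in $G$ and degree $4$ in $K$, hence exactly one edge of $M$ is incident to it; so $M$ is a perfect matching. Writing $A\sqcup B$ for the bipartition of $K$, every $A$--$B$ edge already lies in $K$, so the edges of $M$ lie inside $A$ and inside $B$, with exactly two edges in each part. The automorphism group of $K_{4,4}$ acts transitively on such configurations, so it suffices to handle the single concrete instance $A=\{1,2,3,4\}$, $B=\{5,6,7,8\}$, $M=\{12,34,56,78\}$.

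For this $G$, the plan is to exhibit the four paths
\[
P_{12}=3{-}5{-}1{-}2{-}6{-}4,\ \ P_{34}=1{-}7{-}3{-}4{-}8{-}2,\ \ P_{56}=7{-}2{-}5{-}6{-}3{-}8,\ \ P_{78}=5{-}4{-}7{-}8{-}1{-}6,
\]
and then to verify that (a) each $P_e$ is a simple path whose middle edge equals the element $e\in M$ named in its subscript, and (b) the $16$ non-middle edges across the four paths are pairwise distinct and so partition $E(K)$. The bipartite pattern of the construction is not a free choice: a $P_5$ whose middle edge lies inside $A$ must have both of its outer vertices in $A$ (the two vertices at distance $2$ from the middle edge), and symmetrically for $B$; this constraint essentially forces the shape of each path, after which only a finite search remains.

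The only real obstacle is the routine bookkeeping for (b); no deeper structural idea is needed once the reduction to a single graph has been made. In particular, the construction can be organized so that each vertex $v\in A$ appears as an internal vertex of the path centered at the $M$-edge through $v$ and of exactly one of $P_{56},P_{78}$, and as an endpoint of exactly one further path, which accounts for all $5$ edges at $v$; the analogous statement for $v\in B$ accounts for the remaining edges.
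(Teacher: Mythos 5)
Your proposal is correct and takes essentially the same route as the paper: observe that $M$ is a perfect matching with two edges inside each side of the bipartition, reduce to a single configuration without loss of generality, and then exhibit an explicit $M$-centered $P_5$-decomposition. Your four paths do check out (each is a path of length $5$ whose middle edge is the named edge of $M$, and the sixteen outer edges are exactly the edges of $K_{4,4}$), so the only difference from the paper is the particular labeling of the four paths.
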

	
	\begin{proof}
		Let \(G\), \(K\), and \(M\) be as in the statement.
		Let \((R,L)\) be the bipartition of \(K\), 
		where $R=\{r_1,r_2,r_3,r_4\}$ and $L=\{l_1,l_2,l_3,l_4\}$.
		Since \(K\) is a complete bipartite graph,
		if $xy\in M$, then either $x,y \in R$ or $x,y \in L$.
		Thus, we may assume, without loss of generality, that $M=\{r_1r_2,r_3r_4,l_1l_2,l_3l_4\}$,
		and hence, ${\D=\{l_1r_1l_3l_4r_2l_2, 
		~l_3r_3l_1l_2r_4l_4,~r_1l_2r_3r_4l_1r_2,
		~r_3l_4r_1r_2l_3r_4\}}$ is an $M$-centered decomposition of $G$ as desired (see Figure~\ref{fig:K44}).	
	\end{proof}
	
	\begin{figure}[H]
		\centering
		\scalebox{.8}{\begin{tikzpicture}[scale=1.5]
				\node (r1) [black vertex] at (-1,0) {};
				\node (r2) [black vertex] at (0,0) {};
				\node (r3) [black vertex] at (1,0) {};
				\node (r4) [black vertex] at (2,0) {};
				
				\node (l1) [black vertex] at (-1,-1.5) {};
				\node (l2) [black vertex] at (0,-1.5) {};
				\node (l3) [black vertex] at (1,-1.5) {};
				\node (l4) [black vertex] at (2,-1.5) {};

				\node () [] at ($(r1)+(90:.25)$) {$r_1$};
				\node () [] at ($(r2)+(90:.25)$) {$r_2$};
				\node () [] at ($(r3)+(90:.25)$) {$r_3$};
				\node () [] at ($(r4)+(90:.25)$) {$r_4$};

				\node () [] at ($(l1)+(270:.3)$) {$l_1$};
				\node () [] at ($(l2)+(270:.3)$) {$l_2$};
				\node () [] at ($(l3)+(270:.3)$) {$l_3$};
				\node () [] at ($(l4)+(270:.3)$) {$l_4$};

				\draw[line width=1.3pt,M edge,color=red] (r1) -- (r2);
				\draw[line width=1.3pt,M edge,color=blue] (l1) -- (l2);
				\draw[line width=1.3pt,M edge,color=green] (r3) -- (r4);
				\draw[line width=1.3pt,M edge,color=black] (l3) -- (l4);
				
				\draw[line width=1.3pt,color=green] (r1) -- (l2);
				\draw[line width=1.3pt,color=green] (l2) -- (r3);
				\draw[line width=1.3pt,color=green] (r4) -- (l1);
				\draw[line width=1.3pt,color=green] (l1) -- (r2);
				\draw[line width=1.3pt,color=blue] (l3) -- (r3);
				\draw[line width=1.3pt,color=blue] (r3) -- (l1);
				\draw[line width=1.3pt,color=blue] (l2) -- (r4);
				\draw[line width=1.3pt,color=blue] (r4) -- (l4);
				\draw[line width=1.3pt,color=black] (l1) -- (r1);
				\draw[line width=1.3pt,color=black] (r1) -- (l3);
				\draw[line width=1.3pt,color=black] (l4) -- (r2);
				\draw[line width=1.3pt,color=black] (r2) -- (l2);
				\draw[line width=1.3pt,color=red] (r3) -- (l4);
				\draw[line width=1.3pt,color=red] (l4) -- (r1);
				\draw[line width=1.3pt,color=red] (r2) -- (l3);
				\draw[line width=1.3pt,color=red] (l3) -- (r4);
		\end{tikzpicture}}
		\caption{$P_5$-decomposition of a \(5\)-regular graph that contains a spanning copy of a \(K_{4,4}\).}
		\label{fig:K44}
	\end{figure}
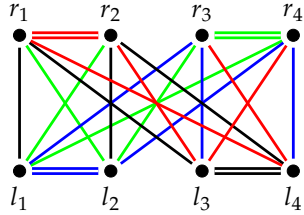

	Given positive integers \(k\) and \(n\),
	the \(k\)th power of the cycle on \(n\) vertices,
	denoted by \(C_n^k\),
	is the graph on the vertex set \(\{0,\ldots,n-1\}\)
	and such that, for every vertex \(v\), 
	we have \(x\in N(v)\) if and only if \(x = v+r\pmod{n}\), 
	where \(r\in\{-k,\ldots,-1\}\cup\{1,\ldots,k\}\).	
	
	\begin{proposition}\label{proposition:powers-of-cycles}
		Let \(n\) and \(k\) be positive integers for which \(k < n/2\).
		If \(G\) is a simple \((2k+1)\)-regular graph on \(n\) vertices
		that contains a copy \(C\) of \(C_n^k\),
		and \(M = E(G) \setminus E(C)\),
		then \(G\) admits an \(M\)-centered \(P_{2k+1}\)-decomposition.
	\end{proposition}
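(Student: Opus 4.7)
The plan is to identify $V(G)$ with $\mathbb{Z}_n$ via the given copy $C$ of $C_n^k$, so that $E(C)=\{\{u,v\}:v-u\pmod n\in\{1,\dots,k\}\}$. Since $C_n^k$ is $2k$-regular and $G$ is $(2k+1)$-regular, $M$ is a perfect matching, and every edge $uv\in M$ satisfies $v-u\pmod n\in\{k+1,\dots,n-k-1\}$. My strategy is to attach, to every vertex $x\in\mathbb{Z}_n$, a length-$k$ ``tail'' $T_x$ in $C$ starting at $x$, and then, for each $uv\in M$, to form $P_{uv}$ by concatenating the reverse of $T_u$, the edge $uv$, and $T_v$. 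A counting check ($n$ tails, $k$ edges each, $|E(C)|=nk$) suggests that each $T_x$ should use exactly one edge of every length $r\in\{1,\dots,k\}$, and that the tails should be translates of a single pattern.

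To realize this I use that paths are graceful: there is a permutation $\pi=(\pi_0,\dots,\pi_k)$ of $\{0,\dots,k\}$ with $\pi_0=0$ and $|\pi_i-\pi_{i-1}|=k-i+1$ for $1\le i\le k$. An explicit choice is the zig-zag permutation $\pi=(0,k,1,k-1,2,k-2,\dots)$. Setting $T_x:=(x+\pi_0,\,x+\pi_1,\,\dots,\,x+\pi_k)$ computed in $\mathbb{Z}_n$, one has $V(T_x)=\{x,x+1,\dots,x+k\}\pmod n$, a set of $k+1$ distinct elements since $k<n/2$, so $T_x$ is a genuine path in $C$ whose consecutive edges have the distinct lengths $k,k-1,\dots,1$. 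Translation invariance then shows that for each fixed $r$ the family $\{T_x\}_{x\in\mathbb{Z}_n}$ contains every length-$r$ edge of $C_n^k$ exactly once; hence $\{T_x:x\in\mathbb{Z}_n\}$ partitions $E(C)$.

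To finish, observe that the middle edge of each $P_{uv}$ is $uv\in M$, while its $2k$ remaining edges come from the two tails. It only remains to verify that $P_{uv}$ is a simple path, i.e., that $V(T_u)\cap V(T_v)=\emptyset$. But these vertex sets are blocks of $k+1$ consecutive elements of $\mathbb{Z}_n$ starting at $u$ and $v$, and they are disjoint precisely when $v-u\pmod n\in\{k+1,\dots,n-k-1\}$, which is exactly what being an $M$-edge guarantees. Hence $\{P_{uv}:uv\in M\}$ is the desired $M$-centered $P_{2k+1}$-decomposition of $G$. The main challenge is recognizing the right shape for the tail: $T_x$ must use every length $1,\dots,k$ so that translates tile $E(C)$, yet it must stay within $k+1$ consecutive integers so that disjointness between $T_u$ and $T_v$ follows automatically from the length gap built into every $M$-edge; once the graceful zig-zag is in hand, the rest is routine arithmetic modulo $n$.
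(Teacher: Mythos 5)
Your proposal is correct and follows essentially the same route as the paper: a length-$k$ zig-zag tail at each vertex using one edge of each length $1,\dots,k$ and contained in a window of $k+1$ consecutive residues, whose translates tile $E(C)$, joined in pairs over the matching edges, with disjointness of the two tails following because every $M$-edge has circular distance greater than $k$. The only (immaterial) difference is the choice of zig-zag: you take decreasing steps $k,k-1,\dots,1$ with window $\{x,\dots,x+k\}$, while the paper takes increasing steps $1,2,\dots,k$ with a window roughly centered at the start vertex.
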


	\begin{proof}
		Let \(G\), \(C\), and \(M\) be as in the statement,
		and let \(V(C) = \{0,\ldots, n-1\}\) as above.
		The operations on the vertices of $C$ are taken modulo $n$.
		Since \(C\) is a \(2k\)-regular graph,
		\(M\) is a perfect matching of \(G\).
		Given \(i\in V(C)\),
		let \(Q_i\) be the path \(v_0v_1\cdots v_k\) 
		in which \(v_0=i\); and, for \(j=1,\ldots,k\),
		we have
		\(v_j = v_{j-1}+j\) if \(j\) is odd; and
		\(v_j = v_{j-1}-j\) if \(j\) is even (see Figure~\ref{fig:PC}).
		Note that for every \(j=1,\ldots,k\),
		the path \(Q_i\) contains an edge \(xy\) such that \(|x-y|=j\).
		Also, we have
		\(V(Q_i) = \big\{i+r\colon r\in\{-\lfloor k/2\rfloor,
		-\lfloor k/2\rfloor+1,\ldots, \lceil k/2\rceil\}\big\}\).	
		It is not hard to check that 
		the set \(\mathcal{Q}=\big\{Q_i\colon i\in V(C)\big\}\) 
		is a \(P_k\)-decomposition of \(C\).
		
		Given an edge \(e=ij\in M\), let \(P_e = Q_i \cup \{ij\} \cup Q_j\).
		Since \(Q_i\) and \(Q_j\) have, 
		respectively, \(i\) and \(j\) as end vertices, 
		and \(E(Q_i)\cap E(Q_j)=\emptyset\),
		the graph \(P_e\) is a trail of length \(2k+1\).
		Thus, since \(\mathcal{Q}\) is a \(P_k\)-decomposition of \(C\),
		and \(M\) is a perfect matching of \(G\),
		the set \(\D=\{P_e\colon e\in M\}\) is a decomposition of \(G\) 
		into trails of length \(2k+1\).
		We claim that, in fact, \(\D\) 
		is a \(P_{2k+1}\)-decomposition of \(G\).
		For that, we prove that if \(ij\in M\),
		then \(V(Q_i)\cap V(Q_j)=\emptyset\).
		Indeed, note that for every \(e=ij\in M\),
		we have \(|i-j|>k\), otherwise we have \(ij\in E(C)\).
		Now, suppose that there is a vertex \(v\) in \(V(Q_i)\cap V(Q_j)\).
		Then, there are \(r_1,r_2\) with
		\(-\lfloor k/2\rfloor \leq r_1,r_2\leq \lceil k/2\rceil\),
		and such that \(i+r_1 = v = j+r_2\).
		Suppose, without loss of generality, that \(i>j\).
		Then, we have \(r_2-r_1 = i-j > k\),
		but \(r_2-r_1 \leq \lceil k/2\rceil+\lfloor k/2\rfloor = k\),
		a contradiction.
	\end{proof}

		\begin{figure}
		\centering
		\scalebox{1.}{\begin{tikzpicture}[scale=1.5]
				
				\node (PC0) [black vertex] at (0,0) {};
				\node (PC1) [black vertex] at (1,0) {};
				\node (PC2) [black vertex] at (1.81,0.59) {};
				\node (PC3) [black vertex] at (2.12,1.54) {};
				\node (PC4) [black vertex] at (1.81,2.49) {};
				\node (PC5) [black vertex] at (1,3.08) {};
				\node (PC6) [black vertex] at (0,3.08) {};
				\node (PC7) [black vertex] at (-0.81,2.49) {};
				\node (PC8) [black vertex] at (-1.12,1.54) {};
				\node (PC9) [black vertex] at (-0.81,0.59) {};
				
				\node () [] at ($(PC0)+(270:.25)$) {$0$};
				\node () [] at ($(PC1)+(270:.25)$) {$1$};
				\node () [] at ($(PC2)+(0:.2)$) {$2$};
				\node () [] at ($(PC3)+(0:.2)$) {$3$};
				\node () [] at ($(PC4)+(0:.2)$) {$4$};
				\node () [] at ($(PC5)+(90:.25)$) {$5$};
				\node () [] at ($(PC6)+(90:.25)$) {$6$};
				\node () [] at ($(PC7)+(180:.2)$) {$7$};
				\node () [] at ($(PC8)+(180:.2)$) {$8$};
				\node () [] at ($(PC9)+(180:.2)$) {$9$};
				
				\draw[line width=1.5pt,color=black] (PC0) -- (PC1);
				\draw[line width=1.3pt,color=black] (PC1) -- (PC2);
				\draw[line width=1.3pt,color=black] (PC4) -- (PC3);
				\draw[line width=1.5pt,color=red] (PC2) -- (PC3);
				\draw[line width=1.5pt,color=red] (PC8) -- (PC9);	
				\draw[line width=1.3pt,color=black] (PC4) -- (PC5);
				\draw[line width=1.5pt,color=black] (PC5) -- (PC6);
				\draw[line width=1.3pt,color=black] (PC6) -- (PC7);
				\draw[line width=1.3pt,color=black] (PC7) -- (PC8);
				\draw[line width=1.3pt,color=black] (PC9) -- (PC0);
				
				\draw[line width=1.3pt,color=black] (PC0) -- (PC2);
				\draw[line width=1.3pt,color=black] (PC4) -- (PC2);
				\draw[line width=1.3pt,color=black] (PC3) -- (PC5);
				\draw[line width=1.5pt,color=black] (PC4) -- (PC6);
				\draw[line width=1.3pt,color=black] (PC5) -- (PC7);
				\draw[line width=1.3pt,color=black] (PC6) -- (PC8);
				\draw[line width=1.5pt,color=red] (PC9) -- (PC7);
				\draw[line width=1.5pt,color=red] (PC3) -- (PC2);	
				\draw[line width=1.5pt,color=red] (PC3) -- (PC1);	
				\draw[line width=1.3pt,color=black] (PC8) -- (PC0);
				\draw[line width=1.5pt,color=black] (PC9) -- (PC1);
				
				\draw[line width=1.3pt,color=black] (PC0) -- (PC3);
				\draw[line width=1.3pt,color=black] (PC2) -- (PC5);
				\draw[line width=1.5pt,color=red] (PC1) -- (PC4);
				\draw[line width=1.5pt,color=red] (PC7) -- (PC0);
				\draw[line width=1.3pt,color=black] (PC3) -- (PC6);
				\draw[line width=1.5pt,color=black] (PC7) -- (PC4);
				\draw[line width=1.3pt,color=black] (PC5) -- (PC8);
				\draw[line width=1.3pt,color=black] (PC6) -- (PC9);
				\draw[line width=1.3pt,color=black] (PC8) -- (PC1);
				\draw[line width=1.5pt,color=black] (PC9) -- (PC2);
				
				\draw[line width=1.3pt,M edge,color=red] (PC2) -- (PC8);
				\draw[line width=1.3pt,M edge,color=black] (PC0) -- (PC5);
				\draw[line width=1.3pt,M edge,color=black] (PC1) -- (PC6);
				\draw[line width=1.3pt,M edge,color=black] (PC3) -- (PC7);
				\draw[line width=1.3pt,M edge] (PC4) -- (PC9);
				
		\end{tikzpicture}}
		\caption{The path $P_{uv}$, with $u=2$ and $v=8,$ in the proof of Proposition~\ref{proposition:powers-of-cycles} for a $7$-regular graph that contains a spanning copy of a $C^3_{10}$.}
		\label{fig:PC}
	\end{figure}
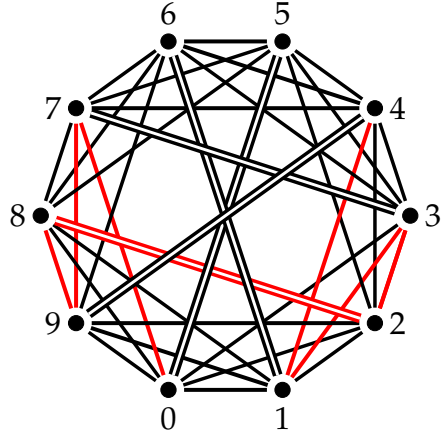
	
	Note that, from the proof of Proposition~\ref{proposition:powers-of-cycles}
	we also obtain a construction for the Hamilton path decomposition 
	of complete graphs of even order.
	
	\begin{corollary}
		Let \(\ell\) be odd.
		The complete graph \(K_{\ell+1}\) admits a \(P_\ell\)-decomposition.
	\end{corollary}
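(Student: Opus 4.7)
The plan is to invoke Proposition~\ref{proposition:powers-of-cycles} directly. Writing \(\ell=2k+1\), the graph \(K_{\ell+1}\) has exactly \(n=2k+2\) vertices and is \((2k+1)\)-regular, and the hypothesis \(k<n/2=k+1\) is trivially satisfied. What I need is a spanning copy of \(C_{2k+2}^{k}\) inside \(K_{\ell+1}\) whose complementary edges form a perfect matching, so that the conclusion of the proposition can be applied verbatim.

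I would label \(V(K_{\ell+1})=\{0,1,\ldots,2k+1\}\) cyclically and let \(C\) be the spanning subgraph consisting of those edges \(ij\) with cyclic distance \(\min(|i-j|,\,2k+2-|i-j|)\) at most \(k\). By definition, \(C\) is a copy of \(C_{2k+2}^{k}\). The remaining edges of \(K_{\ell+1}\) are precisely the pairs \(\{i,\,i+k+1\}\) with \(0\le i\le k\), i.e., the antipodal pairs modulo \(2k+2\); since every vertex has a unique antipode, these edges form a perfect matching \(M\) of \(K_{\ell+1}\).

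Applying Proposition~\ref{proposition:powers-of-cycles} to \(G=K_{\ell+1}\), \(C\), and \(M=E(G)\setminus E(C)\) then yields an \(M\)-centered \(P_{2k+1}\)-decomposition \(\D\). Since each path in \(\D\) has \(\ell\) edges and therefore \(\ell+1=|V(K_{\ell+1})|\) vertices, every such path is automatically a Hamilton path of \(K_{\ell+1}\); this simultaneously confirms the Hamilton path decomposition claim made in the paragraph preceding the corollary. The only point requiring care is verifying that the antipodal matching is exactly the edge complement of \(C_{2k+2}^{k}\) inside \(K_{2k+2}\), which follows immediately from the degree count \(\binom{2k+2}{2}-(2k+2)k=k+1\); beyond this bookkeeping I do not foresee any genuine obstacle, as all nontrivial work has already been absorbed into Proposition~\ref{proposition:powers-of-cycles}.
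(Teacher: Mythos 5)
Your proposal is correct and is essentially the paper's own route: the paper obtains the corollary precisely by viewing \(K_{2k+2}\) as a \((2k+1)\)-regular graph containing a spanning \(C_{2k+2}^{k}\) whose complementary edges are the antipodal perfect matching, and invoking Proposition~\ref{proposition:powers-of-cycles} (the hypothesis \(k<n/2=k+1\) holding trivially). Your verification that the antipodal pairs are exactly \(E(K_{\ell+1})\setminus E(C)\) and that the resulting \(P_\ell\)'s are Hamilton paths matches the paper's intent.
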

	
	A slight variation of the proof of Proposition~\ref{proposition:powers-of-cycles} also provides the following result.
	
	\begin{proposition}\label{lemma:cycle-factorization}
		Let \(\ell\) be odd, and
		let \(G\) be an \(\ell\)-regular graph with a perfect matching \(M\).
		If each component of \(G\setminus M\)  is the \(\frac{\ell-1}{2}\)-th power of a cycle,
		then \(G\) admits an \(M\)-centered \(P_\ell\)-decomposition.
	\end{proposition}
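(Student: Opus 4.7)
The plan is to reuse the construction from the proof of Proposition~\ref{proposition:powers-of-cycles} componentwise. Set \(k=(\ell-1)/2\), and let \(C^{(1)},\ldots,C^{(s)}\) be the components of \(G\setminus M\); by hypothesis each \(C^{(t)}\) is a copy of \(C^{k}_{n_t}\) for some \(n_t\), and necessarily \(n_t\geq 2k+1\) since \(C^{(t)}\) must be \(2k\)-regular. I would label \(V(C^{(t)})\) cyclically as \(\{0,1,\ldots,n_t-1\}\) and, inside each \(C^{(t)}\), build the family \(\{Q_i^{(t)}\colon i\in V(C^{(t)})\}\) of length-\(k\) paths exactly as in the proof of Proposition~\ref{proposition:powers-of-cycles}. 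The same reasoning as there shows that each family is a \(P_k\)-decomposition of \(C^{(t)}\).

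Next, for each matching edge \(e=ij\in M\), let \(t(i)\) and \(t(j)\) denote the indices of the components of \(G\setminus M\) containing \(i\) and \(j\), and set \(P_e=Q_i^{(t(i))}\cup\{ij\}\cup Q_j^{(t(j))}\). Since \(Q_i^{(t(i))}\) and \(Q_j^{(t(j))}\) have \(i\) and \(j\) as endpoints and share no edges with each other or with \(\{ij\}\), \(P_e\) is a trail of length \(2k+1=\ell\) whose middle edge is \(ij\). The step that requires care is verifying that \(P_e\) is actually a path, that is, \(V(Q_i^{(t(i))})\cap V(Q_j^{(t(j))})=\emptyset\). When \(t(i)\neq t(j)\) this is immediate, because the two subpaths live in different components of \(G\setminus M\). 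When \(t(i)=t(j)=t\), the key observation is that \(ij\notin E(C^{(t)})\), since \(M\) is edge-disjoint from \(G\setminus M\); hence \(i\) and \(j\) are at cyclic distance greater than \(k\) in \(C^{k}_{n_t}\), and the same modular arithmetic computation used in the proof of Proposition~\ref{proposition:powers-of-cycles} delivers the required disjointness.

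Finally, I would conclude that \(\D=\{P_e\colon e\in M\}\) is an \(M\)-centered \(P_\ell\)-decomposition of \(G\). Each edge of any \(C^{(t)}\) is covered exactly once because \(\{Q_i^{(t)}\}_i\) is a \(P_k\)-decomposition of \(C^{(t)}\), and each vertex \(i\) contributes its path \(Q_i^{(t)}\) to precisely the \(P_e\) indexed by the unique matching edge incident to \(i\); and each edge of \(M\) is covered exactly once as the middle edge of its own \(P_e\). The main obstacle, a mild one, is the ``same component'' case of the vertex-disjointness step, but since the relevant cyclic-distance inequality depends only on the intrinsic structure of \(C^{k}_{n_t}\) and not on how it sits inside \(G\), this is essentially a transcription of the argument already carried out in Proposition~\ref{proposition:powers-of-cycles}, so no new idea is required beyond handling the components in parallel.
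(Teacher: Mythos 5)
Your proposal is correct and is precisely the ``slight variation'' of the proof of Proposition~\ref{proposition:powers-of-cycles} that the paper alludes to without spelling out: apply the same \(Q_i\)-construction in each component, note \(n_t\geq 2k+1\), and split the vertex-disjointness check into the different-component case (trivial) and the same-component case (the original cyclic-distance argument, since \(ij\notin E(C^{(t)})\)). No gaps; nothing further is needed.
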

	
	Let \(G_1\) and \(G_2\) be disjoint graphs with perfect matchings \(M_1\) and \(M_2\), respectively.
	Let \(a_1b_1,\ldots,a_kb_k \in M_1\) and \(x_1y_1,\ldots, x_ky_k\in M_2\) be distinct edges,
	and let \(G\) be the graph obtained from the disjoint union \(G_1\cup G_2\) by removing \(a_1b_1,\ldots,a_kb_k,x_1y_1,\ldots, x_ky_k\)
	and adding the edges \(a_1x_1,b_1y_1,\ldots,a_kx_k,b_ky_k\).
	We say that \(G\) is a \emph{collage} of \(G_1\) and \(G_2\) \emph{over edges} of \(M_1\) and \(M_2\),
	and denote by \(M_G\) the perfect matching \({\big(M_1\cup M_2 \cup \{a_1x_1,b_1y_1,\ldots,a_kx_k,b_ky_k\}\big) \setminus \{a_1b_1,\ldots,a_kb_k,x_1y_1,\ldots, x_ky_k\}}\). 
	When \(M_1\) and \(M_2\) are clear from the context, we say simply that \(G\) is a \emph{collage} of \(G_1\) and \(G_2\).
	Note that  \(G\) is \(\ell\)-regular if and only if \(G_1\) and \(G_2\) are \(\ell\)-regular.
	
	Let \(G\) be an \(\ell\)-regular graph, where \(\ell\) is an odd positive integer, and let \(M\) be a perfect matching of \(G\).
	We say that \(G\) is \emph{\(M\)-constructable} if either \(G\) admits an \(M\)-centered \(P_\ell\)-decomposition,
	or \(G\) is the collage of an \(M_1\)-constructable graph and an \(M_2\)-constructable graph over edges of \(M_1\) and \(M_2\) and \(M=M_G\).
	The next straightforward result is a useful tool in the proof of Theorem~\ref{theorem:2g2r!=0}.
	
	\begin{lemma}\label{lemma:collage}
		Let \(\ell\) be odd, and \(G\) be an \(\ell\)-regular graph.
		If \(G\) is \(M\)-constructable, then~\(G\) admits an \(M\)-centered \(P_\ell\)-decomposition.
	\end{lemma}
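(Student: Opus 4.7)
The plan is to proceed by induction on the number of collage operations in a construction witnessing that $G$ is $M$-constructable (equivalently, strong induction on $|V(G)|$, since each collage strictly increases the vertex count). The base case, in which $G$ already admits an $M$-centered $P_\ell$-decomposition, is immediate from the definition. So suppose $G$ is a collage of an $M_1$-constructable graph $G_1$ and an $M_2$-constructable graph $G_2$ over edges $a_1b_1,\ldots,a_kb_k\in M_1$ and $x_1y_1,\ldots,x_ky_k\in M_2$, with new edges $a_ix_i,b_iy_i$, where $M=M_G$. By the inductive hypothesis applied to the two strictly smaller graphs, there exist an $M_1$-centered $P_\ell$-decomposition $\D_1$ of $G_1$ and an $M_2$-centered $P_\ell$-decomposition $\D_2$ of $G_2$.

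Write $\ell=2m+1$, so that the middle edge of every path in $\D_1\cup\D_2$ occupies positions $(m,m+1)$. Since $\D_1$ is $M_1$-centered, the assignment sending an edge $e\in M_1$ to the unique path in $\D_1$ with middle edge $e$ is a bijection, and similarly for $M_2$ and $\D_2$. For each $i\in\{1,\ldots,k\}$ let $P_i=u_0u_1\cdots u_\ell\in\D_1$ be the path with $u_m=a_i$ and $u_{m+1}=b_i$, and let $Q_i=v_0v_1\cdots v_\ell\in\D_2$ be the path with $v_m=x_i$ and $v_{m+1}=y_i$. The central splicing step replaces the pair $(P_i,Q_i)$ by the two walks
\[
P_i' = u_0u_1\cdots u_m v_m v_{m-1}\cdots v_0 \qquad\text{and}\qquad P_i'' = u_\ell u_{\ell-1}\cdots u_{m+1} v_{m+1} v_{m+2}\cdots v_\ell,
\]
each of length $2m+1=\ell$, whose middle edges are respectively the new collage edges $u_mv_m=a_ix_i$ and $u_{m+1}v_{m+1}=b_iy_i$, both of which lie in $M_G=M$. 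Because $V(G_1)\cap V(G_2)=\emptyset$, the two halves of each walk involve disjoint vertex sets, so $P_i'$ and $P_i''$ are genuine paths rather than merely trails.

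It then remains to verify that
\[
\D = \bigl(\D_1\setminus\{P_1,\ldots,P_k\}\bigr)\cup\bigl(\D_2\setminus\{Q_1,\ldots,Q_k\}\bigr)\cup\{P_i',P_i''\colon 1\le i\le k\}
\]
is an $M$-centered $P_\ell$-decomposition of $G$. For edge coverage and disjointness, the transition from $\D_1\cup\D_2$ to $\D$ discards exactly the edges $\{a_ib_i,x_iy_i\colon 1\le i\le k\}$ and introduces exactly $\{a_ix_i,b_iy_i\colon 1\le i\le k\}$, matching the symmetric difference between $E(G_1)\cup E(G_2)$ and $E(G)$; the untouched paths remain pairwise edge-disjoint, and the new paths reuse only edges of $P_i$ and $Q_i$ that were removed. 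For the centering condition, the set of middle edges of $\D$ equals $\bigl((M_1\cup M_2)\setminus\{a_ib_i,x_iy_i\}\bigr)\cup\{a_ix_i,b_iy_i\}=M_G=M$. The only subtle point in the whole argument is the \emph{path} (as opposed to trail) status of $P_i'$ and $P_i''$, which, as noted, is immediate from $V(G_1)\cap V(G_2)=\emptyset$; consequently no serious obstacle arises.
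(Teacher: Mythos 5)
Your proposal is correct and follows essentially the same argument as the paper: induct on the size of the construction, take the paths $P_i$ and $Q_i$ of $\D_1$ and $\D_2$ whose middle edges are $a_ib_i$ and $x_iy_i$, and splice their halves across the new edges $a_ix_i$ and $b_iy_i$, using the disjointness of $V(G_1)$ and $V(G_2)$ to see the spliced trails are paths and that the middle edges are exactly $M_G$. No substantive difference from the paper's proof.
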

	
	\begin{proof}
		The proof follows by induction on \(|V(G)|\).
		By the definition of \(M\)-constructable, 
		we may assume that \(G\) is the collage of an \(M_1\)-constructable graph \(G_1\)
		and an \(M_2\)-constructable graph \(G_2\) over edges of \(M_1\) and \(M_2\).
		By the induction hypothesis, \(G_i\) admits an \(M_i\)-centered \(P_\ell\)-decomposition \(\D_i\), for \(i=1,2\).
		Let \(a_i,b_i,x_i,y_i\), for \(i=1,\ldots,k\) be such that
		\(G\) is the graph obtained from \(G_1\cup G_2\) by removing \(a_1b_1,\ldots,a_kb_k,x_1y_1,\ldots, x_ky_k\)
		and adding \(a_1x_1,b_1y_1,\ldots,a_kx_k,b_ky_k\) as above.
		For  \({i=1,\ldots,k}\), let \(P_i\in\D_1\) and \(Q_i\in\D_2\) be the paths containing the edges \(a_ib_i\) and \(x_iy_i\), respectively.
		By the def\-i\-ni\-tion of \(M_1\)- and \(M_2\)-centered \(P_\ell\)-decomposition, for \(i=1,\ldots,k\),
		we may write \(P_i = P_{i,1}a_ib_i P_{i,2}\) and \(Q_i = Q_{i,1}x_iy_iQ_{i,2}\),
		where \(P_{i,1},P_{i,2},Q_{i,1}\) and \(Q_{i,2}\) are paths of length \((\ell-1)/2\).
		Since \(G_1\) and \(G_2\) are disjoint, \(V(P_{i,j})\cap V(Q_{i,j}) = \emptyset\) for \(i=1,\ldots,k\) and \(j=1,2\).
		Let \(R_{i,1} = P_{i,1}\cup \{a_ix_i\}\cup Q_{i,1}\) and \(R_{i,2} = P_{i,2}\cup \{b_iy_i\} \cup Q_{i,2}\),
		and note that \(\D = \big(\D_1\setminus\{P_1,\ldots,P_k\}\big)\cup\big(\D_2\setminus\{Q_1,\ldots,Q_k\}\big)\cup\{R_{i,j}\colon i=1,\ldots,k\text{ and }j=1,2\}\)
		is an \(M_G\)-centered \(P_\ell\)-decomposition of \(G\) as desired.
	\end{proof}
	
	By Proposition~\ref{proposition:K44}, if $G$ contains a spanning copy $K$ of $K_{4,4}$ and $M=E(G)\setminus E(K)$, then $G$ is $M$-constructable. Therefore, Lemma~\ref{lemma:collage} yields the following result.
	
	\begin{corollary}\label{corollary:K44}
		If $G$ is a 5-regular graph that contains a $K_{4,4}$-factor $K$ and ${M=E(G)\setminus E(K)}$, then $G$ admits an $M$-centered $P_5$-decomposition.
	\end{corollary}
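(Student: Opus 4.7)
The plan is to establish that $G$ is $M$-constructable; once this is done, Lemma~\ref{lemma:collage} immediately delivers the desired $M$-centered $P_5$-decomposition. I argue by induction on the number $t$ of connected components $K^1,\ldots,K^t$ of the $K_{4,4}$-factor $K$.

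For the base case $t=1$, the graph $G$ already contains a spanning copy of $K_{4,4}$, so Proposition~\ref{proposition:K44} produces an $M$-centered $P_5$-decomposition, making $G$ $M$-constructable directly from the first clause of the definition.

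For the inductive step, I would construct a $5$-regular graph $G_1$ on $V(K^1)$ by taking the subgraph of $G$ induced on $V(K^1)$ and augmenting it with virtual matching edges pairing up the \emph{loose} vertices of $V(K^1)$, i.e., those whose $M$-partner lies outside $V(K^1)$; by Proposition~\ref{proposition:K44}, $G_1$ is $M_1$-constructable. Analogously, I would build a $5$-regular graph $G_2$ on $V(G)\setminus V(K^1)$ from the remaining $t-1$ components; by the inductive hypothesis, $G_2$ is $M_2$-constructable. The original graph $G$ is then realized as the collage of $G_1$ and $G_2$: removing the virtual matching edges and adding back the actual crossing $M$-edges of $G$ recovers $G$, certifying $M$-constructability through the second clause.

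The main obstacle is selecting compatible pairings of loose vertices on both sides of the split. On the $V(K^1)$ side, loose vertices must be paired within a single bipartition class $R^1$ or $L^1$ of $K^1$ so that the virtual matching edges do not coincide with $E(K^1)$; this is always possible because a parity argument using $|R^1|=|L^1|=4$, together with the fact that every internal $M$-edge of $V(K^1)$ has both ends in one class, forces each class to contribute an even number of loose vertices. A symmetric constraint applies on the $V(G)\setminus V(K^1)$ side, where the virtual $M_2$-edges must avoid $E(K^2)\cup\cdots\cup E(K^t)$. Verifying that a simultaneous, compatible pairing can always be chosen is the combinatorial crux of the argument, and is likely to require a careful selection of which component $K^i$ to peel off first, or an auxiliary rerouting argument for configurations in which the natural pairing is blocked.
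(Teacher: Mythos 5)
Your overall strategy is exactly the one the paper relies on: establish \(M\)-constructability by peeling off components of the factor, with Proposition~\ref{proposition:K44} as the base case and Lemma~\ref{lemma:collage} doing the gluing. Your parity observation (every internal \(M\)-edge of \(V(K^1)\) has both ends in one class, so each class contributes an even number of loose vertices) is correct. But the step you yourself flag as ``the combinatorial crux'' is a genuine gap, not a routine verification, and neither of the remedies you suggest closes it. Note first that the two pairings are not independent: in a collage, removing \(a_ib_i\in M_1\) and \(x_iy_i\in M_2\) adds precisely the crossing edges \(a_ix_i\) and \(b_iy_i\), so pairing \(a_i\) with \(b_i\) on one side \emph{forces} the pairing of their \(M\)-partners on the other side. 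Equivalently, you must partition the crossing \(M\)-edges into pairs so that, on each side of the split, the two endpoints of a pair lie in a single class of a single component (or in distinct components of that side) — otherwise a virtual edge is parallel to a factor edge and the resulting piece is not simple.

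This can be impossible. Take \(t=2\) with \(R^1=\{a_1,\dots,a_4\}\), \(L^1=\{b_1,\dots,b_4\}\), \(R^2=\{c_1,\dots,c_4\}\), \(L^2=\{d_1,\dots,d_4\}\) and \(M=\{a_1c_1,\,a_2d_1,\,b_1c_2,\,b_2d_2,\,a_3a_4,\,b_3b_4,\,c_3c_4,\,d_3d_4\}\). This is a simple \(5\)-regular graph with a \(K_{4,4}\)-factor, every class has exactly two loose vertices, yet each of the four types \((R^1,R^2),(R^1,L^2),(L^1,R^2),(L^1,L^2)\) is realized by exactly one crossing edge; hence every pairing of the four crossing edges produces, on at least one side, a virtual edge joining the two classes of one \(K_{4,4}\). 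Since \(t=2\) there is no choice of which component to peel first and no coarser split is available (any collage split must respect the components of the factor, as all cross edges of a collage lie in \(M_G\)). So your induction cannot start on this instance: one needs either a direct \(M\)-centered \(P_5\)-decomposition of such blocked configurations (an additional base case beyond Proposition~\ref{proposition:K44}) or a different reduction. For what it is worth, the paper's own justification of this corollary is a one-sentence assertion that \(G\) is \(M\)-constructable and supplies none of these details either; your analysis correctly locates where the real work lies, but it does not do that work.
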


	\section{$5$-regular graphs that contain Cayley graphs}\label{sec:5-regular}

	In this section, we explore \(5\)-regular graphs that contain spanning simply 
	commutative \(4\)-regular Cayley graphs.
	Botler, Mota, and Wakabayashi~\cite{BoMoWa15} showed that every triangle-free 5-regular graph $G$ that has a perfect matching admits a $P_5$-decomposition. For that, they applied the following strategy: i) to find an initial decomposition of $G$ into paths and trails; 
	and ii) to perform exchanges of edges between the elements of $\D$, preserving a special invariant, while minimizing the number of trails that are not paths.
	
	The proof of our main result (Theorem~\ref{thm:main}) 
	consists of four steps.
	First, we deal with a somehow degenerate case (Theorem~\ref{theorem:no-g2r2}).
	After that, we follow the framework used by Botler, Mota, Wakabayashi~\cite{BoMoWa15},
	i.e, from the structure of Cayley graphs, we find an initial decomposition into trails,
	not necessarily paths (Proposition~\ref{proposition:initial-decomposition}), and then we exchange edges
	between the elements of the decomposition in order to reduce the number of \emph{bad elements}
	(the trails that are not paths). 
	For that, we first show that the bad elements are distributed in a circular fashion (Lemma~\ref{lemma:P5-decomposition}),
	and then we show how to deal with these ``cycles of bad elements'' 
	(Theorem~\ref{theorem:2g2r!=0}). 
	The invariants preserved by the operations in the proofs of Lemma~\ref{lemma:P5-decomposition} and Theorem~\ref{theorem:2g2r!=0} are presented, respectively, in Definitions~\ref{def:complete-commutative}
	 and~\ref{def:semi-complete-commutative}.
	
	The following lemma is used often throughout the text.

	\begin{lemma}\label{lemma:no-cycles}
		Let \(\ell\) be odd, and \(G\) be an \(\ell\)-regular graph.
		If \(\D\) is a decomposition of~\(G\) into trails of length \(\ell\),
		then each vertex of \(G\) is the end vertex of precisely one element of~\(\D\).
	\end{lemma}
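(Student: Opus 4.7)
The plan is to prove this lemma by a simple double-counting argument that combines edge counting with a parity observation at each vertex. First I would observe that, since each trail in $\D$ has $\ell$ edges and $|E(G)|=\ell|V(G)|/2$, the decomposition satisfies $|\D|=|V(G)|/2$. Because each (open) trail has exactly two end vertices, the total number of (vertex, trail) incidences at end vertices is at most $2|\D|=|V(G)|$, with equality if and only if no trail in $\D$ is closed.

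For the lower bound I would use the parity of $\ell$. Fix a vertex $v\in V(G)$. For each trail $T\in\D$, the degree of $v$ in $T$ is even unless $v$ is an end vertex of $T$ appearing with multiplicity one in the sequence $x_0,\ldots,x_\ell$ defining $T$; in that case the degree is odd. Since the trails decompose $E(G)$, the sum of the degrees of $v$ over the trails of $\D$ equals $\deg_G(v)=\ell$, which is odd. Consequently $v$ must be an end vertex of an odd number of trails in~$\D$, and in particular of at least one. Summing over all vertices yields at least $|V(G)|$ (vertex, trail) incidences at end vertices.

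Combining the two bounds forces equality, so every vertex is an end vertex of exactly one trail (and, incidentally, no element of $\D$ is a closed trail). The main conceptual point—and the only thing one has to be mildly careful about—is to handle the possibility of closed trails in the definition given in the paper, which the counting rules out automatically. I would spell out the parity step in one line, because the rest is a routine edge count.
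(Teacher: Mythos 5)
Your proposal is correct and follows essentially the same double-counting argument as the paper: bound the number of (vertex, trail) odd-degree incidences above by $2|\D| = |V(G)|$ via the edge count, below by $|V(G)|$ via the parity of $\ell$ at each vertex, and conclude equality. The only cosmetic difference is that the paper phrases the count in terms of vertices of odd degree in each trail rather than ``end vertices appearing with multiplicity one,'' which sidesteps the slight imprecision in your phrasing (an end vertex of an open trail may also recur as an interior vertex and still has odd degree), but the substance is identical.
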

	
	\begin{proof}
		Let \(k\), \(G\) and \(\D\) be as in the statement.
		Let \(n=|V(G)|\).
		Given an element \(T\in\D\), we denote by \(o(T)\) the number of vertices $v$ in $T$ for which $d_T(v)$ is odd,
		and given a vertex \(v\in V(G)\),
		we denote by \(\D(v)\) the number of trails in \(\D\) for which $d_T(v)$ is odd.
		Clearly, \(\sum_{T\in \D} o(T) = \sum_{v\in V(G)}\D(v)\).
		Moreover, for every trail $T$, we have $o(T)\leq 2$.
		Also, since every element of \(\D\) has \(\ell\) edges,
		we have \(|\D| = \frac{1}{\ell}|E(G)| 
		= \frac{1}{\ell}\frac{1}{2}\ell n = \frac{1}{2}n\).
		Thus, we have  \(\sum_{T\in\D}o(T) \leq 2|\D| = n\).
		Now, since \(v\in V(G)\) has odd degree (in $G$), 
		\(v\) must have odd degree in at least one element of \(\D\),
		and hence \(\D(v) \geq 1\).
		Thus, we have \(\sum_{v\in V(G)}\D(v)\geq n\),
		and hence \(n \leq \sum_{v\in V(G)}D(v) = \sum_{T\in\D} o(T) \leq n\).
		This implies that \(\D(v) = 1\) for every \(v\in V(G)\), as desired.	
	\end{proof}
	
	Recall that \(\Gamma\) is a finite group of order \(n\) and operation \(+\).
	Fix two elements \(g,r\) of~\(\Gamma\),
	we say that \(\{g,r\}\) is a \emph{simple commutative generator} (SCG)
	if (a) \(0\notin \{g, r, 2g, 2r\}\)\label{remark:no-multiple-edges};
	(b) \(g\notin\{r,-r\}\);
	and (c) \(g+r = r+g\).
	Let \(\{g,r\}\) be an SCG, put \(S = \{g, -g, r, -r\}\), and consider the Cayley graph \(X=X(\Gamma,S)\).
	By construction, \(X\) is a simply commutative Cayley graph (see Section~\ref{sec:introduction}).
	Conditions (a) and (b) guarantee that \(X\) is a simple graph,
	while condition (c) introduces the main restriction explored in this paper.
	In this case, we say that~\(X\) is the graph \emph{generated} by \(\{g,r\}\),
	and that \(\{g,r\}\) is the \emph{generator} of \(X\).
	Finally, we say that a simple \(5\)-regular graph \(G\) with vertex set \(\Gamma\)
	is a \emph{\(\{g,r\}\)-graph}
	if \(G\) contains a spanning Cayley graph \(X\) generated by \(\{g,r\}\).
	We say that \(G\) is a \emph{simply commutative generated graph} or, for short, \emph{SCG-graph} if~\(G\) 
	is a \(\{g,r\}\)-graph for some SCG \(\{g,r\}\).
	In this section, we verify Conjecture~\ref{conj:favaron} for SCG-graphs. In particular,
	Proposition~\ref{lemma:cycle-factorization} implies that every $\{g, r\}$-graph for which $2g = r$ admits an $M_{g,r}$-centered decomposition;
	and as a consequence of Corollary~\ref{corollary:K44}, we obtain the following result, 
	which is also a special case of our main result.
	
	\begin{theorem}\label{theorem:no-g2r2}
		Every $\{g,r\}$-graph for which $2g+2r=0$ and \(2g-2r=0\) admits an $M_{g,r}$-centered 
		decomposition.
	\end{theorem}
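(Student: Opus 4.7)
The plan is to show that under the hypotheses $2g+2r=0$ and $2g-2r=0$, the spanning Cayley graph $X = X(\Gamma,S)$ with $S=\{g,-g,r,-r\}$ is a $K_{4,4}$-factor of $G$; then Corollary~\ref{corollary:K44}, applied with $M_{g,r} = E(G)\setminus E(X)$, immediately yields the desired $M_{g,r}$-centered $P_5$-decomposition.

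First I would work out the algebraic structure of the subgroup $H = \langle g, r\rangle$. Adding and subtracting the two hypothesized relations gives $4g = 0$ and $2g = 2r$. Since SCG condition (a) forces $2g \neq 0$, the element $g$ has order exactly $4$. Set $t := r - g$, which is well-defined because $g$ and $r$ commute. The identity $2g = 2r$ becomes $2t = 0$; SCG condition (b) rules out $t = 0$ (which would give $r = g$) and $t = 2g$ (which, together with $4g = 0$, would give $r = 3g = -g$, contradicting $g \neq -r$). Hence $t$ has order exactly $2$ and lies outside $\langle g\rangle$, so $H = \langle g\rangle \oplus \langle t\rangle \cong \mathbb{Z}_4 \oplus \mathbb{Z}_2$ has order $8$.

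Next I would exhibit the $K_{4,4}$ structure on each connected component of $X$. Since $X$ is a Cayley graph generated by elements of $H$, its connected components are precisely the cosets of $H$ in $\Gamma$, each of size $8$. Writing an element of $H$ as $ag + bt$ with $(a,b) \in \mathbb{Z}_4 \oplus \mathbb{Z}_2$, the four generators $g = (1,0)$, $-g = (3,0)$, $r = (1,1)$, $-r = (3,1)$ all have odd first coordinate, so adding any element of $S$ flips the parity of the first coordinate. Hence inside each coset $v+H$ the two subsets of size $4$ defined by this parity form a bipartition in which every edge of $X$ crosses. Since each vertex has $|S|=4$ neighbours and the opposite part has exactly $4$ vertices, each component is a copy of $K_{4,4}$, so $X$ is a $K_{4,4}$-factor of $G$ and Corollary~\ref{corollary:K44} finishes the argument.

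I do not foresee a serious obstacle. The only delicate point is checking that SCG condition (b) excludes both degenerate values $t=0$ and $t=2g$, ensuring $\langle g,r\rangle$ really has order $8$; once that is settled, the bipartition of each component by the parity of the first coordinate is an elementary observation about abelian Cayley graphs, and the theorem reduces cleanly to the earlier corollary.
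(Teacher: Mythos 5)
Your proposal is correct and follows essentially the same route as the paper: both reduce the theorem to Corollary~\ref{corollary:K44} by showing that every component of the spanning Cayley graph $X(\Gamma,\{g,-g,r,-r\})$ is a copy of $K_{4,4}$, so that $M_{g,r}=E(G)\setminus E(X)$ and the corollary applies. The only difference is internal to that verification: you identify $\langle g,r\rangle\cong\mathbb{Z}_4\oplus\mathbb{Z}_2$ and read off the bipartition from the parity of the $g$-coordinate, which is a cleaner, more structural version of the paper's argument bounding the component to eight vertices and excluding odd cycles.
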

	
	\begin{proof}	
		Let \(G\) be a \(\{g,r\}\)-graph for which \(2g+2r=0\) and  \(2g-2r=0\) and put \({M=M_{g,r}}\).
		Note that we also have \(4g=4r=0\).
		Let \(u\) be a vertex of \(G\),
		and let \(H\) be the component of \(G\setminus E(M)\) that contains \(u\).
		In what follows, we prove that \(H\) is a copy of \(K_{4,4}\).
		Since $g$ and $r$ commute, if \(v\in V(H)\), we have \(v = u + ig + jr\), where \(i,j\in\mathbb{N}\).
		Since \(4g=4r=0\), we may assume \(i,j\in\{0,1,2,3\}\).
		Moreover, since \(2g - 2r = 0\) (and hence \(2g=2r\)), we may assume \(j\in\{0,1\}\).
		Therefore, there are at most eight vertices in \(H\), namely, \(V(H) = \{u, u+g,u+2g,u+3g, u+r,u+2g+r,u+3g+r\}\).
		We claim that \(H\) is bipartite.
		Indeed, suppose that there is an odd cycle \(C\) in \(H\).
		Then, there is an element $x \in V(C)$ such that $x+ig+jr=x$, where $i,j \in \mathbb{N}$. 
		Note that \(i+j\) can be obtained from the length of \(C\) by ignoring pairs of edges with the same color and different directions.
		Since $C$ is odd, precisely one between $i$ and $j$ is odd.
		Suppose, without loss of generality, that $i$ is odd, and hence \(j\) is even.
		Note that, since \(2g=2r\), we have \(jr = jg\).
		Therefore, $(i+j)g = ig + jr = 0$. 
		Let \(s\in\{1,3\}\) be such that \(i+j = 4q + s\) for some \(q\in\N\).
		Then we have \(0 = (i+j)g = 4qg + sg\), which implies \(sg = 0\).
		Thus, if \(s = 1\), then \(g=0\);
		and if \(s = 3\), then \(g = 4g-sg = 0\),
		a contradiction to the definition of SCG.
		Thus, since \(H\) is \(4\)-regular, \(H\) is a copy of \(K_{4,4}\).
		Now, since every component of \(G\setminus E(M)\) is isomorphic to \(K_{4,4}\).
		Therefore, $G$ is a $5$-regular graph that contains a $K_{4,4}$-factor,
		and hence by Corollary~\ref{corollary:K44}, \(G\) admits an \(M\)-centered decomposition as desired.
	\end{proof}

	If \(X\) is the graph generated by an SCG \(\{g,r\}\),
	and \(x\in\{g,r\}\),
	then we denote by \(F_x\) the \(2\)-factor of \(X\) 
	with edge set \(E(F_x) = \{v+x\colon v\in\Gamma\}\).
	If \(G\) is a \(\{g,r\}\)-graph, then we denote by \(M_{g,r}\) the perfect matching \(G\setminus E(F_g \cup F_r)\),
	and the triple \(\{M_{g,r},F_g,F_r\}\) is called the \emph{base factorization} of \(G\).
	Although \(G\) is a simple graph, for ease of notation, 
	we refer to an edge \(uv\in F_x\), with \(x\in\{g,r\}\), as a \emph{green} (resp. \emph{red}) \emph{out edge} of \(u\) and \emph{in edge} of \(v\) if \(v = u+x\) and \(x=g\) (resp. \(x=r\)).
	In the figures throughout the text,
	the edges in \(F_g\), \(F_r\), \(M_{g,r}\)
	are illustrated, respectively, in dotted green, dashed red, 
	and double black patterns, 
	while edges without specific affiliation are illustrated in straight gray pattern.
	Moreover, if such an edge has a specific direction (i.e., in edge or out edge), it is illustrated accordingly.
	Note that each vertex \(u\) has precisely one edge of each type (green in edge, green out edge, red in edge, red out edge),
	and is incident to precisely one edge of \(M_{g,r}\).
	In particular, the group structure overcomes Theorem~\ref{theorem:petersen} by giving a decomposition of \(X\) into \(2\)-factors in terms of the elements \(g\) and \(r\).
	
	In the rest of the paper we deal with the case $2g+2r\neq 0$.
	For that, we characterize the elements of the forthcoming decompositions.
	
	\begin{defi}\label{def:types}
		We say that a trail \(T\) in a \(\{g,r\}\)-graph is of type~A, B, C, or D 
		if \(T\) can be written as \(a_0a_1a_2a_3a_4a_5\), 
		where \(a_0,a_1,a_2,a_3,a_4\) are distinct vertices, 
		as follows.
		\begin{enumerate}[type~A:]
			\item
			\(a_2 = a_5\),
			\(a_2a_3\in M_{g,r}\),
			\(a_2a_1,a_3a_4\in F_g\), 
			\(a_4a_5 \in F_r\), and
			\(a_1a_0\in F_g\cup F_r\cup M_{g,r}\),
			i.e., \(a_1a_0\) is an out edge of \(a_1\), or \(a_1a_0\in M_{g,r}\)
			(see Figure~\ref{fig:types}\subref{fig:typeA}).
			In this case, we say that \(a_3\) 
			is the \emph{primary connection vertex} of~\(T\), 
			\(a_2\) is the \emph{secondary connection vertex}
			of \(T\);
			\(a_1\) is the \emph{auxiliary vertex} of \(T\);
			and \(a_4\) is the \emph{tricky vertex} of \(T\).
			We denote these vertices, respectively, by
			\(\cv_1(T)\), \(\cv_2(T)\), \(\aux(T)\), and \(\tr(T)\);
			
			\item
			$a_5\notin\{a_0,a_1,a_2,a_3,a_4\}$,
			\(a_2a_3\in M_{g,r}\)
			\(a_2a_1,a_3a_4\in F_g\), 
			\(a_1a_0,a_4a_5\in F_g\cup F_r\cup M_{g,r}\)
			(see Figure~\ref{fig:types}\subref{fig:typeB});
			
			\item
			$a_5\notin\{a_0,a_1,a_2,a_3,a_4\}$,
			\(a_2a_1,a_4a_3\in F_g\),
			\(a_3a_2,a_4a_5\in F_r\),
			\(a_1a_0\in F_g\cup F_r\cup M_{g,r}\),
			and, moreover, we have \(a_2a_4\in E(G)\) and \(a_2a_4 \in M_{g,r}\)
			(see Figure~\ref{fig:types}\subref{fig:typeC});
			
			\item
			$a_5\notin\{a_0,a_1,a_2,a_3,a_4\}$,
			\(a_1a_0,a_4a_5\in F_r\), 
			\(a_1a_2,a_3a_4\in M_{g,r}\), and \(a_3a_2\in F_g\)
			(see Figure~\ref{fig:types}\subref{fig:typeD}).
		\end{enumerate}
	\end{defi}

	\begin{figure}[H]
		\centering
		\begin{subfigure}{.24\textwidth}
			\centering
			\scalebox{.7}{\begin{tikzpicture}[scale=1.5]
					\node (0) [black vertex] at ($(0,1)+(120:1)$) {};
					\node (1) [black vertex] at (120:1) {};
					\node (2) [black vertex] at (0,0) {};
					\node (3) [black vertex] at (0:1) {};
					\node (4) [black vertex] at (60:1) {};
					\node (x) [] at ($(1,0)+(60:1)$) {};
					\node (y) [] at ($(1,1)+(60:1)$) {};
					
					\node () [] at ($(0)+(180:.3)$) {$a_0$};
					\node () [] at ($(1)+(180:.3)$) {$a_1$};
					\node () [] at ($(2)+(-90:.3)$) {$a_2=a_5$};
					\node () [] at ($(3)+(0:.3)$) {$a_3$};
					\node () [] at ($(4)+(90:.3)$) {$a_4$};
					\node () [] at ($(x)+(0:.3)$) {\color{white}$a_4$};
					\node () [] at ($(y)+(0:.3)$) {\color{white}$a_5$};
					
					\draw[line width=1.5pt,color=gray,->] (1) -- (0);
					\draw[line width=1.5pt,dotted,color=green,->] (2) -- (1);
					\draw[line width=1.5pt,dotted,color=green,->] (3) -- (4);
					\draw[line width=1.5pt,dashed,color=red,->] (4) -- (2);
					\draw[line width=1.2pt,M edge] (2) -- (3);
			\end{tikzpicture}}
			\caption{Type A.}
			\label{fig:typeA}
		\end{subfigure}
		\begin{subfigure}{.24\textwidth}
			\centering
			\scalebox{.7}{\begin{tikzpicture}[scale=1.5]
					\node (0) [black vertex] at ($(0,1)+(120:1)$) {};
					\node (1) [black vertex] at (120:1) {};
					\node (2) [black vertex] at (0,0) {};
					\node (3) [black vertex] at (1,0) {};
					\node (4) [black vertex] at ($(1,0)+(60:1)$) {};
					\node (5) [black vertex] at ($(1,1)+(60:1)$) {};
					
					\node () [] at ($(0)+(180:.3)$) {$a_0$};
					\node () [] at ($(1)+(180:.3)$) {$a_1$};
					\node () [] at ($(2)+(-90:.3)$) {$a_2$};
					\node () [] at ($(3)+(-90:.3)$) {$a_3$};
					\node () [] at ($(4)+(0:.3)$) {$a_4$};
					\node () [] at ($(5)+(0:.3)$) {$a_5$};
					
					\draw[line width=1.3pt,color=gray,->] (1) -- (0);
					\draw[line width=1.3pt,color=gray,->] (4) -- (5);
					\draw[line width=1.5pt,dotted,color=green,->] (2) -- (1);
					\draw[line width=1.5pt,dotted,color=green,->] (3) -- (4);
					\draw[line width=1.3pt,M edge] (2) -- (3);
			\end{tikzpicture}}
			\caption{Type B.}
			\label{fig:typeB}
		\end{subfigure}	
		\begin{subfigure}{.24\textwidth}
			\centering
			\scalebox{.7}{\begin{tikzpicture}[scale=1.5]
					\node (0) [black vertex] at ($(0,1)+(120:1)$) {};
					\node (1) [black vertex] at (120:1) {};
					\node (2) [black vertex] at (0,0) {};
					\node (3) [black vertex] at (0:1) {};
					\node (4) [black vertex] at (60:1) {};
					\node (5) [black vertex] at ($(3)+(60:1)$) {};
					
					\node () [] at ($(0)+(180:.3)$) {$a_0$};
					\node () [] at ($(1)+(180:.3)$) {$a_1$};
					\node () [] at ($(2)+(-90:.3)$) {$a_2$};
					\node () [] at ($(3)+(-90:.3)$) {$a_4$};
					\node () [] at ($(4)+(90:.3)$) {$a_3$};
					\node () [] at ($(5)+(90:.3)$) {$a_5$};
					
					\draw[line width=1.3pt,color=gray,->] (1) -- (0);
					\draw[line width=1.5pt,dotted,color=green,->] (2) -- (1);
					\draw[line width=1.5pt,dotted,color=green,->] (3) -- (4);
					\draw[line width=1.5pt,dashed,color=red,->] (4) -- (2);
					\draw[line width=1.5pt,dashed,color=red,->] (3) -- (5);
			\end{tikzpicture}}
			\caption{Type C.}
			\label{fig:typeC}
		\end{subfigure}
		\begin{subfigure}{.24\textwidth}
			\centering
			\scalebox{.7}{\begin{tikzpicture}[scale=1.5]
					\node (0) [black vertex] at ($(0,1)+(120:1)$) {};
					\node (1) [black vertex] at (120:1) {};
					\node (2) [black vertex] at ($(1,0)+(60:1)$) {};
					\node (3) [black vertex] at (1,0) {};
					\node (4) [black vertex] at (0,0) {};
					\node (5) [black vertex] at ($(1,1)+(60:1)$) {};
					
					\node () [] at ($(0)+(180:.3)$) {$a_0$};
					\node () [] at ($(1)+(180:.3)$) {$a_1$};
					\node () [] at ($(2)+(0:.3)$) {$a_4$};
					\node () [] at ($(3)+(-90:.3)$) {$a_3$};
					\node () [] at ($(4)+(-90:.3)$) {$a_2$};
					\node () [] at ($(5)+(0:.3)$) {$a_5$};
					
					\draw[line width=1.5pt,dashed,color=red,->] (1) -- (0);
					\draw[line width=1.3pt,M edge] (4) -- (1);
					\draw[line width=1.5pt,dashed,color=red,->] (2) -- (5);
					\draw[line width=1.5pt,dotted,color=green,->] (3) -- (4);
					\draw[line width=1.3pt,M edge] (2) -- (3);
				\end{tikzpicture}
			}
			\caption{Type D.}
			\label{fig:typeD}
		\end{subfigure}
		\caption{The main types of trails.}
		\label{fig:types}
	\end{figure}
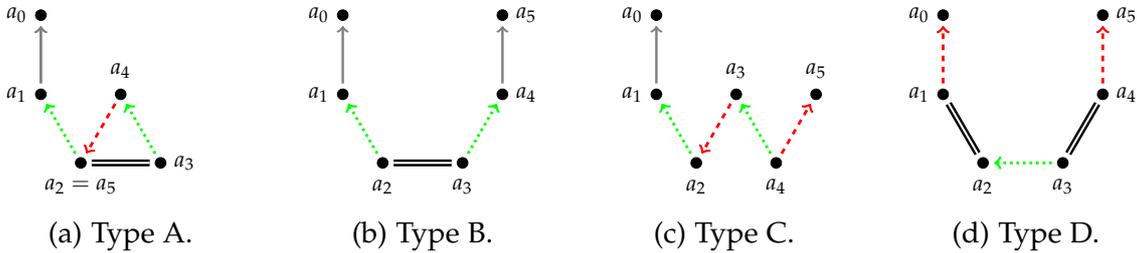
	
	We remark that elements of type~A are not paths,
	while elements of type~B, C, and D are paths.
	Moreover, the connection vertices are defined only for elements of type~A,
	and the connection vertices of an element \(T\) are always incident to an edge of \(M_{g,r}\) in \(T\),
	and hence, no vertex of a \(\{g,r\}\)-graph is a connection vertex of two edge-disjoint elements of type~A in a graph.

	Given a trail (not necessarily a path) 
	\(T= a_0a_1a_2a_3a_4a_5\) in a decomposition \(\D\) of a \(\{g,r\}\)-graph \(G\),
	we say that the edge \(a_1a_0\) (resp.  \(a_4a_5\)) is a \emph{hanging edge}
	at \(a_1\) (resp. \(a_4\)) if \(a_1a_0\in M_{g,r}\cup F_g\cup F_r\) (resp. \(a_4a_5\in M_{g,r}\cup F_g\cup F_r\)),
	i.e., the hanging edges of \(T\) are the end edges of \(T\) that are in \(M_{g,r}\) or that are in edges of its end vertices.
	By Definition~\ref{def:types}, all end edges of elements of type~A, B, C, or D are hanging edges.
	Note that if \(T\) is an element of type~A where \(a_5 = a_2\), then \(a_1a_0\), \(a_2a_3\) and \(a_4a_2\) are hanging edges 
	of \(T\) at, respectively, \(a_1\), \(a_3\), and \(a_4\).
	Given a trail decomposition \(\D\) of a graph \(G\) and a vertex \(u\in V(G)\),
	we denote by \(\hang_\D(u)\) the number of edges of \(G\) that are hanging edges at \(u\).
	
	The next lemma presents a consequence of the exchange of hanging edges at primary connection vertices.

	\begin{lemma}\label{lemma:T'1-is-path}
		If \(T= a_0a_1a_2a_3a_4a_5\) is an element of type~A in a decomposition of a \(\{g,r\}\)-graph \(G\) into trails of length \(5\),
		where \(a_5=a_2\) and \(a_2a_3\in M_{g,r}\),
		and \(u\in V(G)\) is such that \(a_3u\) is a hanging edge at \(a_3 = \cv_1(T)\),
		then \(T'= a_0a_1a_2a_4a_3u\) is of type~C.
	\end{lemma}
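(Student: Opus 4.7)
The plan is to match $T'=a_0a_1a_2a_4a_3u$ against the type~C template under the relabeling $(b_0,b_1,b_2,b_3,b_4,b_5):=(a_0,a_1,a_2,a_4,a_3,u)$. The argument has two natural parts: first pinning down what the edge $a_3u$ must look like, and then checking each of the type~C conditions for $T'$.

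To identify $a_3u$, I exploit the type~A structure at $a_3=\cv_1(T)$. Among the five edges of $G$ incident to $a_3$, exactly two are used by $T$: the unique $M_{g,r}$-edge at $a_3$ (namely $a_2a_3$) and the green out-edge at $a_3$ (namely $a_3a_4$). The three remaining edges at $a_3$ are the green in-edge (from $a_3-g$), the red out-edge (to $a_3+r$), and the red in-edge (from $a_3-r$). By the definition of hanging edge, $a_3u$ must be either in $M_{g,r}$ or an out-edge of $a_3$; the $M_{g,r}$-option and the green out-edge option are both already consumed by $T$, so the only possibility is $a_3u\in F_r$ with $u=a_3+r$.

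The structural conditions of type~C then follow directly from the type~A hypothesis together with $a_5=a_2$: $b_2b_1=a_2a_1\in F_g$ and $b_4b_3=a_3a_4\in F_g$; $b_3b_2=a_4a_5\in F_r$ and $b_4b_5=a_3u\in F_r$; $b_1b_0=a_1a_0\in F_g\cup F_r\cup M_{g,r}$; and the ``diagonal'' $b_2b_4=a_2a_3\in M_{g,r}$. It remains to verify the distinctness of the six vertices of $T'$. The vertices $a_0,\ldots,a_4$ are distinct by the type~A hypothesis, and using commutativity of $g$ and $r$ together with $a_4=a_3+g$, $a_2=a_3+g+r$, and $a_1=a_3+2g+r$, the inequalities $u\neq a_3, a_4, a_2, a_1$ reduce respectively to $r\neq 0$, $r\neq g$, $g\neq 0$, $2g\neq 0$, all granted by the SCG axioms. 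The main obstacle will be the remaining inequality $u\neq a_0$: I would split on the three possibilities allowed by type~A for the end edge $a_1a_0$ (green out-edge, red out-edge, or matching edge at $a_1$), and in each subcase exclude $u=a_0$ using the SCG relations together with the edge-disjointness between $T$ and the trail of $\D$ that carries $a_3u$.
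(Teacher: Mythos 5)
Your identification of the hanging edge as the red out-edge of \(a_3\) (so \(u=a_3+r\)), and your verification of the type~C template and of \(u\notin\{a_1,a_2,a_3,a_4\}\) via simplicity and the SCG relations \(r\neq 0\), \(g\neq 0\), \(g\neq r\), \(2g\neq 0\), coincide with the paper's proof. The gap is in the one step you defer: \(u\neq a_0\). Your plan — split on whether \(a_1a_0\) is a green out-edge, a red out-edge, or an \(M_{g,r}\)-edge, and exclude \(u=a_0\) "using the SCG relations together with edge-disjointness" — cannot be carried out. If \(a_1a_0\in F_g\), then \(a_0=a_1+g=a_3+3g+r\), so \(u=a_0\) is equivalent to \(3g=0\), which is perfectly consistent with the SCG axioms (they only forbid \(g,r,2g,2r=0\) and \(g=\pm r\); take \(g\) of order \(3\)); locally such a configuration is a legitimate type~A trail together with an edge-disjoint trail ending with the red out-edge of \(a_3\), so neither simplicity nor edge-disjointness yields a contradiction. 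Likewise, if \(a_1a_0\in F_r\) then \(u=a_0\) amounts to \(2g+r=0\), again not excluded, and if \(a_1a_0\in M_{g,r}\) the vertex \(a_0\) is not determined by group relations at all.

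What actually rules out \(u=a_0\) is a global parity fact, not a local one: Lemma~\ref{lemma:no-cycles} says that in any decomposition of an odd-regular graph into trails of length equal to the degree, every vertex has odd degree in exactly one element. Here \(a_0\) has odd degree in \(T\), while \(u\) is the far endpoint of the hanging edge \(a_3u\), which lies in an element of \(\D\) different from \(T\) (the edge \(a_3u\in F_r\) is not in \(E(T)\)) and in which \(u\) is an end vertex; hence \(u\neq a_0\). This is exactly how the paper closes the argument, and without invoking this counting lemma (or reproving it) your subcase analysis leaves the statement unproved.
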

	
	\begin{proof}
		Let \(T\), \(u\), and \(T'\) be as in the statement.
		Since \(a_3a_4\) is a green out edge of \(a_3\) and \(a_2a_3\) is an edge of \(M_{g,r}\) 
		incident to \(a_3\), we conclude that \(a_3u\) is a red out edge of~\(a_3\), and hence \(u = a_3 + r\).
		Now, since \(G\) is simple, we have \(u\notin\{a_2,a_3,a_4\}\);
		if \(u=a_1\), then we have \(a_3 + r= u = a_1 = a_3 + g + r + g\), 
		which implies \(2g=0\), a contradiction to the definition of SCG.
		Finally, by Lemma~\ref{lemma:no-cycles} we have \(u\neq a_0\).
		Thus, \(T'\) is a path.
		Since \(a_3u\in F_r\), \(T'\) is of type~C.
	\end{proof}
	
	\begin{figure}[H]
		\centering
		\begin{subfigure}{.45\textwidth}
			\centering
			\scalebox{.8}{\begin{tikzpicture}[scale=1.5]
					
					\draw[fatpath,backcolor1] (120:1)-- ($(-1,0)+(120:1)$) -- (120:1) -- (0,0) -- (0:1) -- (60:1) -- (0,0) -- (60:1);

					\node (0) [black vertex] at ($(-1,0)+(120:1)$) {};
					\node (1) [black vertex] at (120:1) {};
					\node (2) [black vertex] at (0,0) {};
					\node (3) [black vertex] at (0:1) {};
					\node (4) [black vertex] at (60:1) {};
					\node () []		 at ($(2,0)+(60:1)$) {};

					\node (4') [black vertex] at (1,0) {};
					\node (5') [black vertex] at ($(4')+(1,0)$) {};

					\node () [] at ($(0)+(90:.2)$) {$a_0$};
					\node () [] at ($(1)+(90:.2)$) {$a_1$};
					\node () [] at ($(2)+(-90:.2)$) {$a_2$};
					\node () [] at ($(3)+(45:.3)$) {$a_3$};
					\node () [] at ($(4)+(90:.2)$) {$a_4$};

					\node () [] at ($(4')+(-45:.3)$) {};
					\node () [] at ($(5')+(90:.2)$) {$u$};

					\draw[line width=1.3pt,color=gray,<-] (0) -- (1);
					\draw[line width=1.5pt,dotted,color=green,->] (2) -- (1);
					\draw[line width=1.5pt,dotted,color=green,->] (3) -- (4);
					\draw[line width=1.5pt,dashed,color=red,->] (4) -- (2);
					\draw[line width=1.3pt,M edge] (2) -- (3);

					\draw[line width=1.3pt,dashed,color=red,->] (4') -- (5');
					
				\end{tikzpicture}
			}
		\end{subfigure}
		\begin{subfigure}{.45\textwidth}
			\centering
			\scalebox{.8}{\begin{tikzpicture}[scale=1.5]
	
					\draw[fatpath,backcolor1] (120:1) -- ($(-1,0)+(120:1)$) -- (120:1) -- (0,0)  (60:1) -- (0,0) -- (60:1) -- (0:1) -- ($(2,0)$) -- (0:1);

					\node (0) [black vertex] at ($(-1,0)+(120:1)$) {};
					\node (1) [black vertex] at (120:1) {};
					\node (2) [black vertex] at (0,0) {};
					\node (3) [black vertex] at (0:1) {};
					\node (4) [black vertex] at (60:1) {};
					\node () []		 at ($(2,0)+(60:1)$) {};

					\node (4') [black vertex] at (1,0) {};
					\node (5') [black vertex] at ($(4')+(1,0)$) {};

					\node () [] at ($(0)+(90:.2)$) {$a_0$};
					\node () [] at ($(1)+(90:.2)$) {$a_1$};
					\node () [] at ($(2)+(-90:.2)$) {$a_2$};
					\node () [] at ($(3)+(45:.3)$) {$a_3$};
					\node () [] at ($(4)+(90:.2)$) {$a_4$};

					\node () [] at ($(4')+(-45:.3)$) {};
					\node () [] at ($(5')+(90:.2)$) {$u$};

					\draw[line width=1.3pt,color=gray,<-] (0) -- (1);
					\draw[line width=1.5pt,dotted,color=green,->] (2) -- (1);
					\draw[line width=1.5pt,dotted,color=green,->] (3) -- (4);
					\draw[line width=1.5pt,dashed,color=red,->] (4) -- (2);
					\draw[line width=1.3pt,M edge] (2) -- (3);

					\draw[line width=1.3pt,dashed,color=red,->] (4') -- (5');
				\end{tikzpicture}
			}
		\end{subfigure}
		\caption{Exchange of edges performed in the proof of Lemma~\ref{lemma:T'1-is-path}.}
		\label{fig:T'1-is-path}
	\end{figure}
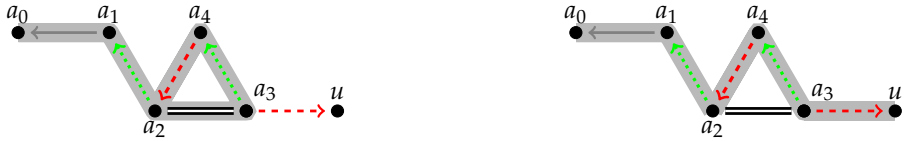

	The following lemma shows how two elements of type~A may be connected.
	
	\begin{lemma}\label{lemma:type12}
		If \(T_1\) and \(T_2\) are two edge-disjoint elements of type~A in a \(\{g,r\}\)-graph \(G\)
		for which \(\tr(T_2) = \cv_1(T_1)\),
		then \(\aux(T_2) = \cv_2(T_1)\).
	\end{lemma}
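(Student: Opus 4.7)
The plan is to exploit the fact that every edge in $F_g \cup F_r$ of a trail of type~A corresponds to a translation by $\pm g$ or $\pm r$ in the group $\Gamma$, so the four named vertices of such a trail can be written as explicit functions of its primary connection vertex, and the commutativity $g+r=r+g$ from the SCG condition makes two of these expressions collapse onto each other.

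More precisely, I would first read off the ``coordinates'' of a type~A trail $T=a_0a_1a_2a_3a_4a_5$ (with $a_5=a_2$) relative to $\cv_1(T)=a_3$. From $a_3a_4\in F_g$ one gets $\tr(T)=a_4=a_3+g$; from $a_4a_5\in F_r$ together with $a_5=a_2$ one gets $\cv_2(T)=a_2=a_3+g+r$; and from $a_2a_1\in F_g$ one gets $\aux(T)=a_1=a_2+g=a_3+2g+r$. The edge $a_1a_0$ plays no role beyond being a hanging edge, so the auxiliary vertex $a_1$ is determined solely by $\cv_1(T)$ and the pair $\{g,r\}$.

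Applying the same readout to $T_2=b_0b_1b_2b_3b_4b_5$ gives $\cv_2(T_2)=b_2=b_4+r=\tr(T_2)+r$ and $\aux(T_2)=b_1=b_2+g=\tr(T_2)+r+g$. Substituting the hypothesis $\tr(T_2)=\cv_1(T_1)=a_3$ and using the commutativity $r+g=g+r$ (guaranteed by the definition of SCG), one obtains $\aux(T_2)=a_3+g+r=a_2=\cv_2(T_1)$, which is exactly the claim. I do not expect any real obstacle: no case analysis on the hanging edges is needed, no distinctness argument is required (the conclusion is an equality, not a non-equality), and one only has to keep track of edge directions carefully; the commutativity of $g$ and $r$ does all the work.
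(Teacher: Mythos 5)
Your proposal is correct and follows essentially the same route as the paper: both read off $\tr(T_2)+r+g$ for $\aux(T_2)$ and $\cv_1(T_1)+g+r$ for $\cv_2(T_1)$ from the directed $F_g$/$F_r$ edges of a type~A trail, and then invoke the commutativity $g+r=r+g$ to identify the two vertices. No gap to report.
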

	
	\begin{proof}
		Let \(T_1 = a_0a_1a_2a_3a_4a_5\)
		and \(T_2 = b_0b_1b_2b_3b_4b_5\), where \(a_5 = a_2\) and \(b_5 = b_2\)
		and \(a_2a_3,b_2b_3\in M_{g,r}\).
		If \(\cv_1(T_1)=\tr(T_2)\),
		then \(a_3=b_4\).
		Since \(b_1 = b_4 + r+g\) and \(a_2 = a_3 + g + r\).
		Thus, \(\aux(T_2) = b_1 = b_4 + r + g = a_3 + r + g = a_2 = \cv_2(T_1)\),
		as desired (see Figure~\ref{fig:type12}).
	\end{proof}

	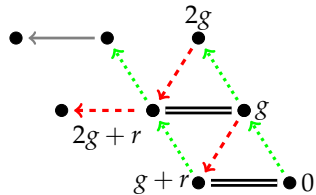
\begin{figure}[H]
		\centering
		\scalebox{.8}{\begin{tikzpicture}[scale=1.5]

				\node (0) [black vertex] at (-1.5,0.8) {};
				\node (1) [black vertex] at (-0.5,0.8) {};
				\node (2) [black vertex] at (0,0) {};
				\node (3) [black vertex] at (1,0) {};
				\node (4) [black vertex] at (0.5,0.8) {};
				\node () []		 at ($(0,0)+(60:1)$) {};

				\node (4') [black vertex] at ($(1,0)$) {};
				\node (3') [black vertex] at ($(3)+(0.5,-0.8)$) {};	
				\node (2') [black vertex] at ($(2)+(0.5,-0.8)$) {};
				\node (1') [black vertex] at ($(0,0)$) {};
				\node (0') [black vertex] at ($(0,0)+(-1,0)$) {};
				\node (5') [] at ($(2,0)$) {};

				\node () [] at ($(0)+(180:.2)$) {};
				\node () [] at ($(1)+(90:.2)$) {};
				\node () [] at ($(2)+(90:.3)$) {};
				\node () [] at ($(3)+(0:.2)$) {$g$};
				\node () [] at ($(4)+(90:.22)$) {$2g$};

				\node () [] at ($(0')+(180:.2)$) {};
				\node () [] at ($(1')-(0.5,0)+(-90:.3)$) {$2g+r$};
				\node () [] at ($(2')+(180:.4)$) {$g+r$};
				\node () [] at ($(3')+(0:.2)$) {$0$};
				\node () [] at ($(4')+(-90:.3)$) {};

				\draw[line width=1.3pt,color=gray,->] (1) -- (0);
				\draw[line width=1.5pt,dotted,color=green,->] (2) -- (1);
				\draw[line width=1.5pt,dotted,color=green,->] (3) -- (4);
				\draw[line width=1.5pt,dashed,color=red,->] (4) -- (2);
				\draw[line width=1.3pt,M edge] (2) -- (3);

				\draw[line width=1.5pt,dashed,color=red,<-] (0') -- (1');
				\draw[line width=1.5pt,dashed,color=red,->] (4') -- (2');
				\draw[line width=1.5pt,dotted,color=green,->] (2') -- (1');
				\draw[line width=1.5pt,dotted,color=green,->] (3') -- (4');
				\draw[line width=1.3pt,M edge] (2') -- (3');

		\end{tikzpicture}}
		\caption{Identities given by Lemma~\ref{lemma:type12} when $b_3 = 0$.}
		\label{fig:type12}
	\end{figure}
	
	\subsection{Complete decompositions}
	
	The following definition consists of two properties that are invariant under a series of operations performed throughout the proof of Lemma~\ref{lemma:P5-decomposition}.
	
	\begin{defi}\label{def:complete-commutative}
	
		A decomposition \(\D\) of a \(\{g,r\}\)-graph \(G\) 
		into trails of length \(5\) 
		is \emph{complete} if the following hold for every \(T\in\D\).
		\begin{enumerate}[(a)]
			\item\label{def:complete-commutative-types}
			\(T\) is of type~A, B, C or D;
			\item\label{def:complete-commutative-hanging-edge}
			If \(T\) is of type~A, then \(\hang_\D\big(\cv_1(T)\big)\geq 2\) and \(\hang_\D\big(\cv_2(T)\big)\geq 1\).
		\end{enumerate}
	\end{defi}
	
	The first step of our proof is given by the next proposition,
	which presents an initial decomposition for the graphs studied.

	\begin{proposition}\label{proposition:initial-decomposition}
		If \(G\) is a $\{g,r\}$-graph for which $2g+2r\neq 0$,
		then \(G\) admits a complete decomposition.
	\end{proposition}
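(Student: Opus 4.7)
The plan is, for each edge \(uv\in M_{g,r}\), to build a canonical trail \(T_{uv}\) of length~5 by walking one step in the green direction, crossing \(uv\), walking another step in green, and capping each end with a red step. Concretely, orient the endpoints so that \(v-u\neq g+r\) (swapping \(u\) and \(v\) otherwise) and set \(a_0=u+g+r\), \(a_1=u+g\), \(a_2=u\), \(a_3=v\), \(a_4=v+g\), and \(a_5=v+g+r\); then \(T_{uv}=a_0a_1a_2a_3a_4a_5\) uses the \(M\)-edge \(uv\), the two green out edges at \(u\) and \(v\), and the two red out edges at \(u+g\) and \(v+g\). Because the \(M\)-partner \(M(w)\) of each vertex is unique, the green edge \(w(w+g)\) appears only in \(T_{w,M(w)}\) and the red edge \(w(w+r)\) appears only in \(T_{w-g,M(w-g)}\), so \(\D=\{T_{uv}\colon uv\in M_{g,r}\}\) is a decomposition of \(G\).

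The type of \(T_{uv}\) is dictated by the offset \(v-u\). When \(v-u=-(g+r)\) we have \(a_5=v+g+r=u=a_2\), so the trail closes into a type~A configuration with \(\cv_1(T_{uv})=v\) and \(\cv_2(T_{uv})=u\); in all other cases \(T_{uv}\) is a path of type~B. Verifying the type~B classification amounts to checking distinctness of \(a_0,\ldots,a_5\): each of the fifteen possible coincidences either reduces to an identity excluded by the SCG conditions (such as \(g=0\), \(2g=0\), or \(g+r=0\)), to \(v-u\in\{\pm g,\pm r\}\) (which would force \(uv\notin M_{g,r}\)), or to \(v-u=\pm(g+r)\) (the type~A case). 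The analogous check for type~A requires only \(g+2r\neq 0\) and \(2g+r\neq 0\), and both are automatic once \(v-u=-(g+r)\), since for instance \(g+2r=0\) would imply \(v-u=r\), contradicting \(uv\in M_{g,r}\). The hypothesis \(2g+2r\neq 0\) is what guarantees the remaining identity \(a_0\neq a_3\) inside a type~A trail.

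For condition~(b) of Definition~\ref{def:complete-commutative}, fix a type~A trail \(T=T_{uv}\) with \(v-u=-(g+r)\). Inside \(T\) itself the \(M\)-edge \(uv\) is a hanging edge at \(\cv_1(T)=v\). In the distinct trail \(T_{v-g,M(v-g)}\) (distinct because \(v-g\notin\{u,v\}\), as \(g\neq 0\) and \(uv\notin F_g\)), the vertex \(v\) plays the role of \(a_1\) or \(a_4\), and so the red out edge \(v(v+r)\) is hanging at~\(v\); together this gives \(\hang_\D(v)\geq 2\). Symmetrically, the red out edge \(u(u+r)\) is hanging at \(\cv_2(T)=u\) inside the distinct trail \(T_{u-g,M(u-g)}\), and so \(\hang_\D(u)\geq 1\).

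The main obstacle is the bookkeeping in the case analysis: one must rule out every group-theoretic coincidence that could simultaneously be consistent with \(uv\in M_{g,r}\) and damage the canonical trail. The only coincidence the SCG conditions do not already forbid, namely \(2g+2r=0\), is exactly the one excluded by hypothesis, so the construction goes through in every remaining case, producing a complete decomposition without the need to invoke types~C or~D at this stage.
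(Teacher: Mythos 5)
Your construction is exactly the paper's: for each $M_{g,r}$-edge you take the middle edge in $M_{g,r}$, flank it with the green out edges at its endpoints, and cap with the red out edges, then classify the resulting trails as type~A or~B and obtain the hanging edges at connection vertices from the red out edge supplied by the trail of the $M$-edge at $z-g$ together with the $M$-edge of the type~A trail itself. The proposal is correct and only differs in spelling out the vertex-distinctness case analysis (and the labeling choice) that the paper dismisses as clear, so it is essentially the same proof.
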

	
	\begin{proof}
		Let \(\{M_{g,r},F_g,F_r\}\) be the base factorization of \(G\).
		For each \(e=xy\in M_{g,r}\), let \(P_e=a_0a_1a_2a_3a_4a_5\),
		where \(a_1a_0,a_4a_5\in F_r\), \(a_2a_1,a_3a_4\in F_g\), \(a_2=x\), 
		and \(a_3 = y\).
		We claim that \(\D = \{P_e\colon e\in M_{g,r}\}\) is complete.
		Clearly, \(P_e\) is an element of type~A or B, for every \(e\in M_{g,r}\),
		and hence \(\D\) satisfies
		Definition~\ref{def:complete-commutative}\eqref{def:complete-commutative-types}.
		Moreover, note that \(a_0a_1\) (resp. \(a_4a_5\)) is a hanging edge 
		at \(a_1\) (resp. \(a_4\)).
		Thus, given \(z\in V(G)\), let \(e' = xy\in M_{g,r}\) be such that \(x= z-g\),
		then \(P_{e'}\) contains a hanging edge at \(z\),
		namely, the red out edge of \(z\),
		and hence there is a hanging edge at every vertex of \(G\).
		Moreover, if \(z=\cv_1(T)\) for some element \(T\in\D\) of type~A, and \(e\in M_{g,r}\cap E(T)\),
		then \(e\) is a second hanging edge at \(z\).
		This proves Definition~\ref{def:complete-commutative}\eqref{def:complete-commutative-hanging-edge}.
	\end{proof}

	We say that an element \(T\) of type~A in a decomposition \(\D\) is \emph{free}
	if \(\tr(T)\neq \cv_i(T')\) for every element \(T'\in\D\) of type~A and \(i\in\{1,2\}\).
	An \emph{A-chain} is a sequence \(T_0, T_1, \ldots, T_{s-1}\) 
	of elements of type~A such that 
	for each $j\in\{0,\ldots,s-1\}$, 
	we have \(\tr(T_j) = \cv_i(T_{j-1})\), for some $i \in\{1,2\}$ 
	(subtraction on the indexes are taken modulo \(s\)). 
	Note that A-chains do not consider the auxiliary vertex when allowing two elements to be consecutive.
	Thus, elements, say \(T\) and \(T'\), of type~A that are not consecutive in an A-chain, 
	or that are in different A-chains, may still share a vertex \(u\) for which \(\cv_i(T) = u = \aux(T')\).
	
	Given a decomposition $\D$ of a graph $G$ into trails of length $5$, denote by
	$\tau(\D)$ the number of elements that are not paths.
	By exchanging edges between the elements of a decomposition 
	given by Proposition~\ref{proposition:initial-decomposition},
	we can show that a complete decomposition that minimizes $\tau(\D)$
	has no free element, and hence its elements of type~A are partitioned 
	into A-chains.

	\begin{lemma}\label{lemma:P5-decomposition}
		Every \(\{g,r\}\)-graph for which $2g+2r\neq 0$ admits a complete decomposition
		in which the elements of type~A are partitioned into A-chains.
	\end{lemma}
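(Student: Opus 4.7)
The plan is a local-minimality argument. By Proposition~\ref{proposition:initial-decomposition}, complete decompositions of $G$ exist; among all such, fix one $\D$ that minimizes $\tau(\D)$. The key claim is that in this $\D$ no type~A element is free. Granted this, the relation $\tr(T)\in\{\cv_1(T'),\cv_2(T')\}$ defines, for each type~A element $T\in\D$, a unique predecessor $T'$---uniqueness follows from the observation after Definition~\ref{def:types} that no vertex of $G$ is a connection vertex of two edge-disjoint type~A elements---and iterating this relation on the finite set of type~A elements of $\D$ yields the required A-chains.

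For the non-freeness claim, suppose for contradiction that a type~A element $T=a_0a_1a_2a_3a_4a_5$ with $a_5=a_2$ in $\D$ is free. By Definition~\ref{def:complete-commutative}\eqref{def:complete-commutative-hanging-edge} one has $\hang_\D(a_3)\ge 2$; since $a_2a_3\in E(T)$ is one such hanging edge, a second hanging edge $a_3u$ at $a_3$ lies in some trail $T^*\in\D$ distinct from $T$. Lemma~\ref{lemma:T'1-is-path} implies that $T':=a_0a_1a_2a_4a_3u$ is a path of type~C. The exchange I would carry out removes $a_2a_3$ from $T$ and $a_3u$ from $T^*$, adds $a_3u$ to the remainder of $T$ to form $T'$, and adds $a_2a_3$ to the remainder of $T^*$ to form a new trail $T^{**}$. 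This produces a decomposition $\D'$ of $G$ into trails of length $5$ in which $T$ has been replaced by the path $T'$; the contradiction sought is $\tau(\D')<\tau(\D)$ together with $\D'$ still complete.

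The bulk of the verification is a case analysis on the shape of $T^*$. Since $a_3=\cv_1(T)$ cannot be a connection vertex of any other type~A element, $T^*$ is either of type~B, C, or D with $a_3u$ an end edge at $a_3$, or of type~A with $a_3\in\{\aux(T^*),\tr(T^*)\}$. In each subcase I would check---using the SCG conditions (in particular $2g\ne 0$, $g\ne\pm r$, and the hypothesis $2g+2r\ne 0$ to rule out spurious vertex coincidences)---that $T^{**}$ is again of type~A, B, C, or D, and that the exchange does not introduce a new type~A element whose creation would cancel the gain from converting $T$ into a path. Here the freeness of $T$, which forbids $a_4$ from being a connection vertex of any type~A element of $\D$, is exactly what rules out the bad configurations. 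Completeness must also be preserved, meaning the hanging-edge counts at the primary and secondary connection vertices of every type~A element of $\D'$ remain at least $2$ and $1$ respectively; these conditions should follow from the fact that the swap only moves the edges $a_2a_3$ and $a_3u$, leaving all other hanging-edge incidences intact.

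The hardest subcase, I expect, is when $T^*$ is itself of type~A with $a_3=\tr(T^*)$: then the naive swap modifies a type~A element and, a priori, the exchange could convert a non-type~A region into type~A, nullifying the gain. In that subcase I would invoke the algebraic identity $\aux(T^*)=\cv_2(T)$ provided by Lemma~\ref{lemma:type12} together with the second hanging edge at $\cv_2(T)=a_2$ guaranteed by Definition~\ref{def:complete-commutative}\eqref{def:complete-commutative-hanging-edge}, and perform a secondary swap at $a_2$ analogous to the primary one at $a_3$, in order to rearrange the local configuration into a complete decomposition with strictly smaller $\tau$. This secondary exchange, together with the case bookkeeping, is the main obstacle: it is where the hypothesis $2g+2r\ne 0$ and the full strength of the commutativity of $g$ and $r$ come into play, preventing the exchanged trails from collapsing into shorter cycles or repeating vertices.
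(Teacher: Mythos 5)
Your skeleton (minimize $\tau$ over complete decompositions, show no type~A element is free, then read off the chain structure) is the paper's, and your simple swap of $a_2a_3$ against the hanging edge $a_3u$ does reproduce the paper's exchanges when $T^*$ is of type~B or~C, or of type~A with $a_3=\aux(T^*)$. But the decisive subcase is exactly the one you defer, and your plan for it fails. Write $T^*=b_0b_1b_2b_3b_4b_5$ with $b_5=b_2$ and $a_3=\tr(T^*)=b_4$. Lemma~\ref{lemma:type12} gives $\aux(T^*)=\cv_2(T)$, i.e.\ $b_1=a_2$, and then \emph{all five} edges of $G$ incident to $a_2$ lie in $E(T)\cup E(T^*)$; the only hanging edge at $a_2$ is the end edge $b_1b_0$ of $T^*$ itself, so the hanging edge at $\cv_2(T)$ guaranteed by Definition~\ref{def:complete-commutative}\eqref{def:complete-commutative-hanging-edge} brings in no third trail, and your ``secondary swap at $a_2$'' has nothing new to swap with. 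Worse, no rearrangement confined to $E(T)\cup E(T^*)$ can decrease $\tau$ while preserving completeness: $a_2$ has degree $5$ in this edge set, so the two new trails cannot both be paths, and if one of them is of type~A its repeated (degree-$3$) vertex must be $a_2$ or $b_2$, in which case its edges are forced and it is $T$ or $T^*$ itself; the remaining candidate fifth edges at $b_1=a_2$ leave five edges with four odd-degree vertices, which is not a single trail. (Your naive swap in this subcase produces $T^{**}=b_0b_1b_2b_3b_4a_2$ with $a_2=b_1$, a non-path whose repeated vertex sits in the wrong position to be of type~A, so completeness is already lost.) The missing idea is the paper's Claim~\ref{claim:AAB,AAC}: one must use the hanging edge guaranteed at $\cv_2(T^*)$ (not at $\cv_2(T)$), which necessarily lies in a third element $T_3$, and show by three-trail exchanges tailored to the type of $T_3$ that $T_3$ can be of none of the types A, B, C, while type~D is impossible because $M$ is a matching; only this yields the contradiction that kills free elements.

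Your concluding step is also incomplete. Uniqueness of the predecessor $T'$ with $\tr(T)\in\{\cv_1(T'),\cv_2(T')\}$ only bounds the in-degree of the auxiliary digraph by one; iterating a function on a finite set produces orbits that are cycles with trees attached, not a partition into the cyclic A-chains of the definition. You must also rule out one element serving as predecessor of two distinct elements through its two connection vertices, i.e.\ bound the out-degree by one. The paper does this again via Lemma~\ref{lemma:type12}: if $\tr(T_2)=\cv_1(T_1)$ then $\aux(T_2)=\cv_2(T_1)$ and $E(T_1)\cup E(T_2)$ already contains every edge at $\cv_2(T_1)$, so no further element can have $\cv_2(T_1)$ as its tricky vertex. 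Without this argument the statement ``partitioned into A-chains'' does not follow from non-freeness alone.
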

	
	\begin{proof}
		Let $g$ and $r$ be as in the statement, let \(G\) be a \(\{g,r\}\)-graph, and put \({M=M_{g,r}}\).
		By Proposition~\ref{proposition:initial-decomposition},
		\(G\) admits a complete decomposition.
		Let \(\D\) be a complete decomposition of~\(G\) that minimizes \(\tau(\D)\).
		In what follows, we prove that \(\D\) contains no free element of type~A.
		For that, we prove three claims regarding the relation between some elements of~$\D$.
		In the proof of each such claim, we exchange edges between some elements of \(\D\) 
		and obtain a complete decomposition \(\D'\) into trails of length~5 
		such that \(\tau(\D') <\tau(\D)\), which is a contradiction to the minimality of \(\D\).
		To check that \(\D'\) is a complete decomposition, we observe the two following items:
		(i) the vertices $u$ for which $\hang_D(u) >\hang_{\D'}(u)$ are vertices 
		that are not connection vertices of \(\D'\),
		for example, tricky vertices of free elements of type~A,
		or connection vertices of elements of type~A in \(\D\) that become paths in \(\D'\).
		Hence, Definition~\ref{def:complete-commutative}\eqref{def:complete-commutative-hanging-edge} holds for \(\D'\); and
		(ii) every element of \(\D'\) that is not an element of \(\D\), i.e., 
		the elements involved in the exchange of edges, are of type~A, B, C, or~D,
		and hence \ref{def:complete-commutative}\eqref{def:complete-commutative-types} 
		holds for \(\D'\).

		\begin{claim} \label{claim:AB,AC}
			No element of type~B or C has a hanging edge at the primary connection vertex 
			of a free element of type~A.
		\end{claim}
		
		\begin{proof}
			Let \(T_1 = a_0a_1a_2a_3a_4a_5\in\D\), where \(a_5=a_2\) and \(a_2a_3\in M\), be a free element of type~A,
			and let \(T_2= b_0b_1b_2b_3b_4b_5\in\D\) be an element of type~B or C that contains a hanging edge at \(\cv_1(T_1)\).
			We divide the proof depending on whether $T_2$ is of type~B or C.
			
			\smallskip	\noindent
			\textbf{$\mathbf{T_2}$ is of type~B.}
			Suppose, for a contradiction, that \(b_4=\cv_1(T_1)=a_3\).			
			Put \({T_1' = a_0a_1a_2a_4a_3b_5}\),
			\(T_2' = b_0b_1b_2b_3b_4a_2\) (see Figure~\ref{fig:case1}),
			and let \(\D' = \big(\D\setminus\{T_1,T_2\}\big)\cup\{T_1',T_2'\}\).
			Note that \(\D'\) is a decomposition of \(G\) into trails of length~\(5\). 
			By Lemma~\ref{lemma:T'1-is-path}, \(T_1'\) is an element of
			Type C.
			In what follows, we prove that \(T_2'\) is of type~B,
			i.e., \(a_2\notin \{b_0,b_1,b_2,b_3,b_4\}\).
			Indeed, since \(G\) has no loops or multiple edges, 
			\(a_2\notin \{b_3,b_4\}\).
			Since \(M\) is a matching, \(a_2\neq b_2\). 
			If \(a_2 = b_1\), then \(b_2 = b_1-g = a_2 -g = a_3 + g +r -g = b_5\),
			and hence \(T_2\) is of type~A, a contradiction.
			Finally, by Lemma~\ref{lemma:no-cycles}, \(a_2 \neq b_0\).
			Thus, \(T'_2\) is an element of type~B, 
			and hence Definition~\ref{def:complete-commutative}\eqref{def:complete-commutative-types} holds for \(\D'\).
			Note that \(\hang_{\D'}(v) \geq \hang_{\D}(v)\) 
			for every \(v\in V(G)\setminus\{a_4\}\). 
			Since $a_4$ is not a connection vertex of \(\D'\), 
			and by Definition~\ref{def:complete-commutative}\eqref{def:complete-commutative-hanging-edge},
			we have \(\hang_{\D}\big(\cv_1(T)\big)\geq 2\) and \(\hang_{\D}\big(\cv_2(T)\big)\geq 1\)
			for every \(T\in\D\),
			we have 
			\(\hang_{\D'}\big(\cv_1(T)\big)\geq 2\) and \(\hang_{\D'}\big(\cv_2(T)\big)\geq 1\)
			for every \(T\in\D'\),
			Thus Definition~\ref{def:complete-commutative}\eqref{def:complete-commutative-hanging-edge} holds for \(\D'\). 
			Therefore, $\D'$ is a complete decomposition such that
			\(\tau(\D')=\tau(\D)-1<\tau(\D)\), a contradiction
			to the minimality of \(\D\).
			
			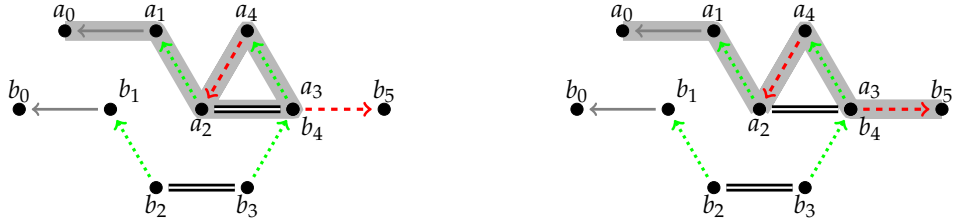
\begin{figure}[H]
				\centering
				\begin{subfigure}{.45\textwidth}
					\centering
					\scalebox{.8}{\begin{tikzpicture}[scale=1.5]
						
							\draw[fatpath,backcolor1] (120:1)-- ($(-1,0)+(120:1)$) -- (120:1) -- (0,0) -- (0:1) -- (60:1) -- (0,0) -- (60:1);
							
							\node (0) [black vertex] at ($(-1,0)+(120:1)$) {};
							\node (1) [black vertex] at (120:1) {};
							\node (2) [black vertex] at (0,0) {};
							\node (3) [black vertex] at (0:1) {};
							\node (4) [black vertex] at (60:1) {};
							\node () []		 at ($(2,0)+(60:1)$) {};

							\node (4') [black vertex] at (1,0) {};
							\node (3') [black vertex] at ($(4')-(60:1)$) {};	
							\node (2') [black vertex] at ($(3')-(1,0)$) {};
							\node (1') [black vertex] at ($(2')+(120:1)$) {};
							\node (0') [black vertex] at ($(2')+(-1,0)+(120:1)$) {};
							\node (5') [black vertex] at ($(4')+(1,0)$) {};

							\node () [] at ($(0)+(90:.2)$) {$a_0$};
							\node () [] at ($(1)+(90:.2)$) {$a_1$};
							\node () [] at ($(2)+(-90:.2)$) {$a_2$};
							\node () [] at ($(3)+(45:.3)$) {$a_3$};
							\node () [] at ($(4)+(90:.2)$) {$a_4$};

							\node () [] at ($(0')+(90:.2)$) {$b_0$};
							\node () [] at ($(1')+(45:.3)$) {$b_1$};
							\node () [] at ($(2')+(-90:.2)$) {$b_2$};
							\node () [] at ($(3')+(-90:.2)$) {$b_3$};
							\node () [] at ($(4')+(-45:.3)$) {$b_4$};
							\node () [] at ($(5')+(90:.2)$) {$b_5$};

							\draw[line width=1.3pt,color=gray,<-] (0) -- (1);
							\draw[line width=1.5pt,dotted,color=green,->] (2) -- (1);
							\draw[line width=1.5pt,dotted,color=green,->] (3) -- (4);
							\draw[line width=1.5pt,dashed,color=red,->] (4) -- (2);
							\draw[line width=1.3pt,M edge] (2) -- (3);

							\draw[line width=1.3pt,color=gray,<-] (0') -- (1');
							\draw[line width=1.5pt,dashed,color=red,->] (4') -- (5');
							\draw[line width=1.5pt,dotted,color=green,->] (2') -- (1');
							\draw[line width=1.5pt,dotted,color=green,->] (3') -- (4');
							\draw[line width=1.3pt,M edge] (2') -- (3');
						\end{tikzpicture}
					}
				\end{subfigure}
				\begin{subfigure}{.45\textwidth}
					\centering
					\scalebox{.8}{\begin{tikzpicture}[scale=1.5]
							
							\draw[fatpath,backcolor1] (120:1) -- ($(-1,0)+(120:1)$) -- (120:1) -- (0,0)  (60:1) -- (0,0) -- (60:1) -- (0:1) -- ($(2,0)$) -- (0:1);
							
							\node (0) [black vertex] at ($(-1,0)+(120:1)$) {};
							\node (1) [black vertex] at (120:1) {};
							\node (2) [black vertex] at (0,0) {};
							\node (3) [black vertex] at (0:1) {};
							\node (4) [black vertex] at (60:1) {};
							\node () []		 at ($(2,0)+(60:1)$) {};

							\node (4') [black vertex] at (1,0) {};
							\node (3') [black vertex] at ($(4')-(60:1)$) {};	
							\node (2') [black vertex] at ($(3')-(1,0)$) {};
							\node (1') [black vertex] at ($(2')+(120:1)$) {};
							\node (0') [black vertex] at ($(2')+(-1,0)+(120:1)$) {};
							\node (5') [black vertex] at ($(4')+(1,0)$) {};

							\node () [] at ($(0)+(90:.2)$) {$a_0$};
							\node () [] at ($(1)+(90:.2)$) {$a_1$};
							\node () [] at ($(2)+(-90:.2)$) {$a_2$};
							\node () [] at ($(3)+(45:.3)$) {$a_3$};
							\node () [] at ($(4)+(90:.2)$) {$a_4$};

							\node () [] at ($(0')+(90:.2)$) {$b_0$};
							\node () [] at ($(1')+(45:.3)$) {$b_1$};
							\node () [] at ($(2')+(-90:.2)$) {$b_2$};
							\node () [] at ($(3')+(-90:.2)$) {$b_3$};
							\node () [] at ($(4')+(-45:.3)$) {$b_4$};
							\node () [] at ($(5')+(90:.2)$) {$b_5$};

							\draw[line width=1.3pt,color=gray,<-] (0) -- (1);
							\draw[line width=1.5pt,dotted,color=green,->] (2) -- (1);
							\draw[line width=1.5pt,dotted,color=green,->] (3) -- (4);
							\draw[line width=1.5pt,dashed,color=red,->] (4) -- (2);
							\draw[line width=1.3pt,M edge] (2) -- (3);

							\draw[line width=1.3pt,color=gray,<-] (0') -- (1');
							\draw[line width=1.5pt,dashed,color=red,->] (4') -- (5');
							\draw[line width=1.5pt,dotted,color=green,->] (2') -- (1');
							\draw[line width=1.5pt,dotted,color=green,->] (3') -- (4');
							\draw[line width=1.3pt,M edge] (2') -- (3');
						\end{tikzpicture}
					}
				\end{subfigure}
				\caption{Exchange of edges between elements of type~A
					and B in the proof of Claim~\ref{claim:AB,AC}.}
				\label{fig:case1}
			\end{figure}
			
			\smallskip \noindent
			\textbf{$\mathbf{T_2}$ is of type~C.}
			We may assume \(b_3b_2\in F_r\).
			In this case we have \(b_4b_3\in F_g\).
			Since \(T_2\) contains a hanging edge at \(\cv_1(T_1)\),
			we have \(a_3=\cv_1(T_1)\in\{b_1,b_4\}\).
			If $b_4=a_3$, then there are two green out edges at \(a_3\), namely \(a_3a_4, b_4a_3\), a contradiction.
			Thus, we may assume that \(a_3 = b_1\).
			Put \(T_1' = a_0a_1a_2a_4a_3b_0\),
			\(T_2' = a_2b_1b_2b_4b_3b_5\) (see Figure~\ref{fig:case2}),
			and let \(\D' = \big(\D\setminus\{T_1,T_2\}\big)\cup\{T_1',T_2'\}\).
			Note that \(\D'\) is a decomposition of \(G\) into trails of length~\(5\).
			By Lemma~\ref{lemma:T'1-is-path}, \(T_1'\) is an element of
			Type C. 
			In what follows we prove that \(T_2'\) is a path. For that,
			we prove that \(a_2\notin \{b_0,b_1,b_2,b_3,b_4\}\). Indeed,
			since \(G\) is simple, \({a_2\notin \{b_1,b_2\}}\).
			If \(a_2 = b_4\), then \(a_2a_1\) and \(b_4b_3\) are two green out edges at \(a_2\), a contradiction.
			By Lemma~\ref{lemma:no-cycles}, $a_2\neq b_5$. 
			Finally, \(a_3 + g + r = a_2\) and \({b_1 = b_3 + r + g}\),
			if \(a_2 = b_3\), then we have \(a_3 + g + r = a_2 = b_3 = b_1 - g - r = a_3 - g - r\),
			which implies \(2g+2r = 0\), a contradiction.
			Thus, \(T'_2\) is an element of type~C, 
			and hence Definition~\ref{def:complete-commutative}\eqref{def:complete-commutative-types} holds for \(\D'\).	
			Analogously to the case above 
			$\D'$ is a complete decomposition of \(G\) such that
			\(\tau(\D')=\tau(\D)-1<\tau(\D)\), a contradiction
			to the minimality of~\(\D\).		
		\end{proof}
		
		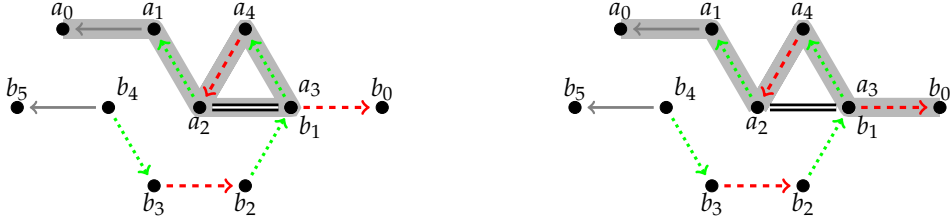
\begin{figure}[H]
			\centering
			\begin{subfigure}{.45\textwidth}
				\centering
				\scalebox{.8}{\begin{tikzpicture}[scale=1.5]
					
						\draw[fatpath,backcolor1] (120:1)-- ($(-1,0)+(120:1)$) -- (120:1) -- (0,0) -- (0:1) -- (60:1) -- (0,0) -- (60:1);
						
						\node (0) [black vertex] at ($(-1,0)+(120:1)$) {};
						\node (1) [black vertex] at (120:1) {};
						\node (2) [black vertex] at (0,0) {};
						\node (3) [black vertex] at (0:1) {};
						\node (4) [black vertex] at (60:1) {};
						\node () []		 at ($(2,0)+(60:1)$) {};

						\node (4') [black vertex] at (1,0) {};
						\node (3') [black vertex] at ($(4')-(60:1)$) {};	
						\node (2') [black vertex] at ($(3')-(1,0)$) {};
						\node (1') [black vertex] at ($(2')+(120:1)$) {};
						\node (0') [black vertex] at ($(2')+(-1,0)+(120:1)$) {};
						\node (5') [black vertex] at ($(4')+(1,0)$) {};

						\node () [] at ($(0)+(90:.2)$) {$a_0$};
						\node () [] at ($(1)+(90:.2)$) {$a_1$};
						\node () [] at ($(2)+(-90:.2)$) {$a_2$};
						\node () [] at ($(3)+(45:.3)$) {$a_3$};
						\node () [] at ($(4)+(90:.2)$) {$a_4$};

						\node () [] at ($(0')+(90:.2)$) {$b_5$};
						\node () [] at ($(1')+(45:.3)$) {$b_4$};
						\node () [] at ($(2')+(-90:.2)$) {$b_3$};
						\node () [] at ($(3')+(-90:.2)$) {$b_2$};
						\node () [] at ($(4')+(-45:.3)$) {$b_1$};
						\node () [] at ($(5')+(90:.2)$) {$b_0$};

						\draw[line width=1.3pt,color=gray,<-] (0) -- (1);
						\draw[line width=1.5pt,dotted,color=green,->] (2) -- (1);
						\draw[line width=1.5pt,dotted,color=green,->] (3) -- (4);
						\draw[line width=1.5pt,dashed,color=red,->] (4) -- (2);
						\draw[line width=1.3pt,M edge] (2) -- (3);

						\draw[line width=1.3pt,color=gray,->] (1') -- (0');
						\draw[line width=1.5pt,dashed,color=red,->] (4') -- (5');
						\draw[line width=1.5pt,dotted,color=green,->] (1') -- (2');
						\draw[line width=1.5pt,dotted,color=green,->] (3') -- (4');
						\draw[line width=1.5pt,dashed,red,->] (2') -- (3');
					\end{tikzpicture}
				}
			\end{subfigure}
			\begin{subfigure}{.45\textwidth}
				\centering
				\scalebox{.8}{\begin{tikzpicture}[scale=1.5]
					
						\draw[fatpath,backcolor1] (120:1) -- ($(-1,0)+(120:1)$) -- (120:1) -- (0,0)  (60:1) -- (0,0) -- (60:1) -- (0:1) -- ($(2,0)$) -- (0:1);

						\node (0) [black vertex] at ($(-1,0)+(120:1)$) {};
						\node (1) [black vertex] at (120:1) {};
						\node (2) [black vertex] at (0,0) {};
						\node (3) [black vertex] at (0:1) {};
						\node (4) [black vertex] at (60:1) {};
						\node () []		 at ($(2,0)+(60:1)$) {};

						\node (4') [black vertex] at (1,0) {};
						\node (3') [black vertex] at ($(4')-(60:1)$) {};	
						\node (2') [black vertex] at ($(3')-(1,0)$) {};
						\node (1') [black vertex] at ($(2')+(120:1)$) {};
						\node (0') [black vertex] at ($(2')+(-1,0)+(120:1)$) {};
						\node (5') [black vertex] at ($(4')+(1,0)$) {};

						\node () [] at ($(0)+(90:.2)$) {$a_0$};
						\node () [] at ($(1)+(90:.2)$) {$a_1$};
						\node () [] at ($(2)+(-90:.2)$) {$a_2$};
						\node () [] at ($(3)+(45:.3)$) {$a_3$};
						\node () [] at ($(4)+(90:.2)$) {$a_4$};

						\node () [] at ($(0')+(90:.2)$) {$b_5$};
						\node () [] at ($(1')+(45:.3)$) {$b_4$};
						\node () [] at ($(2')+(-90:.2)$) {$b_3$};
						\node () [] at ($(3')+(-90:.2)$) {$b_2$};
						\node () [] at ($(4')+(-45:.3)$) {$b_1$};
						\node () [] at ($(5')+(90:.2)$) {$b_0$};

						\draw[line width=1.3pt,color=gray,<-] (0) -- (1);
						\draw[line width=1.5pt,dotted,color=green,->] (2) -- (1);
						\draw[line width=1.5pt,dotted,color=green,->] (3) -- (4);
						\draw[line width=1.5pt,dashed,color=red,->] (4) -- (2);
						\draw[line width=1.3pt,M edge] (2) -- (3);

						\draw[line width=1.3pt,color=gray,->] (1') -- (0');
						\draw[line width=1.5pt,dashed,color=red,->] (4') -- (5');
						\draw[line width=1.5pt,dotted,color=green,->] (1') -- (2');
						\draw[line width=1.5pt,dotted,color=green,->] (3') -- (4');
						\draw[line width=1.5pt,dashed,red,->] (2') -- (3');
					\end{tikzpicture}
				}
			\end{subfigure}
			\caption{Exchange of edges between elements of type~A
				and C in the proof of Claim~\ref{claim:AB,AC}.}
			\label{fig:case2}
		\end{figure}

		\begin{claim}\label{claim:AAB,AAC}
			Let \(T_1\) and \(T_2\) be two elements of type~A in \(\D\).
			If \(T_1\) is free and \(T_2\) contains a hanging 
			edge on \(\cv_1(T_1)\), then no element of type~A, B, or C in \(\D\setminus\{T_1,T_2\}\) 
			contains a hanging edge at \(\cv_2(T_2)\).	
		\end{claim}  
		
		\begin{proof}
			Let \(T_1 = a_0a_1a_2a_3a_4a_5\) and \(T_2 = b_0b_1b_2b_3b_4b_5\) be two elements of \(\D\), 
			where \({a_5=a_2}\) and \(b_5=b_2\) and \(a_2a_3,b_2b_3\in M\).
			First, we prove that \(\cv_1(T_1)=\tr(T_2)\),
			and hence, by Lemma~\ref{lemma:type12}, we have \(\cv_2(T_1) = \aux(T_2)\).
			Suppose, for contradiction, that \(\cv_1(T_1)\neq \tr(T_2)\). 
			Since \(b_2b_3\in M\), we must have $b_1=\cv_1(T_1)=a_3$. 
			Now, put \({T_1' = a_0a_1a_2a_4a_3b_0}\),
			\(T_2' = a_2b_1b_2b_3b_4b_2\) (see Figure~\ref{fig:case3}) and let
			\(\D' = \big(\D\setminus\{T_1,T_2\}\big)\cup\{T_1',T_2'\}\).
			By Lemma~\ref{lemma:T'1-is-path}, \(T_1'\) is an element of type~C.
			We claim that \(T_2'\) is an element of type~A.
			For that we prove that \(a_2 \notin \{b_1,b_2,b_3,b_4\}\).
			Again, since \(G\) is a simple graph, we have \(a_2\notin\{b_1,b_2\}\).
			Since every vertex is incident to precisely one edge of \(M\), we have \(a_2\neq b_3\),
			and if \(a_2 = b_4\), then we have \(a_3 + g + r = a_2 = b_4 = a_3 - g - r\), which implies \(2g+2r=0\), a contradiction.
			Thus, Definition~\ref{def:complete-commutative}\eqref{def:complete-commutative-types} holds for \(\D'\).
			Analogously to the cases above, 
			$\D'$ is a complete decomposition such that
			\(\tau(\D')=\tau(\D)-1<\tau(\D)\), a contradiction
			to the minimality of \(\D\).
			Finally, by Lemma~\ref{lemma:type12}, we have \(\cv_2(T_1) = \aux(T_2)\).
			
			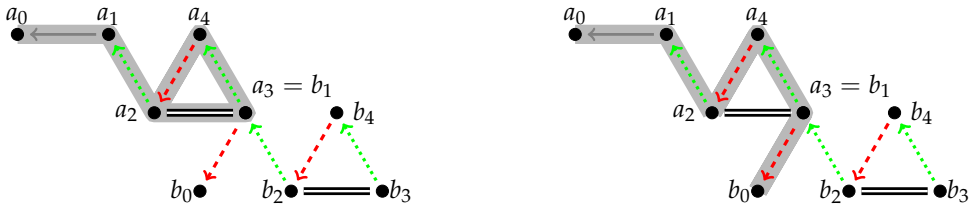
\begin{figure}[H]
				\centering
				\begin{subfigure}{.45\textwidth}
					\centering
					\scalebox{.8}{\begin{tikzpicture}[scale=1.5]
						
							\draw[fatpath,backcolor1] (120:1)-- ($(-1,0)+(120:1)$) -- (120:1) -- (0,0) -- (0:1) -- (60:1) -- (0,0) -- (60:1);
							
							\node (0) [black vertex] at ($(-1,0)+(120:1)$) {};
							\node (1) [black vertex] at (120:1) {};
							\node (2) [black vertex] at (0,0) {};
							\node (3) [black vertex] at (0:1) {};
							\node (4) [black vertex] at (60:1) {};
							\node () []		 at ($(2,0)+(60:1)$) {};

							\node (4') [black vertex] at ($(2,0)$) {};
							\node (3') [black vertex] at ($(3,0)-(60:1)$) {};	
							\node (2') [black vertex] at ($(3')-(1,0)$) {};
							\node (1') [black vertex] at ($(2')+(120:1)$) {};
							\node (0') [black vertex] at ($(2')+(180:1)$) {};
							\node (5') [] at ($(4,0)$) {};

							\node () [] at ($(0)+(90:.2)$) {$a_0$};
							\node () [] at ($(1)+(90:.2)$) {$a_1$};
							\node () [] at ($(2)+(180:.3)$) {$a_2$};
							\node () [] at ($(3)+(30:.6)$) {$a_3=b_1$};
							\node () [] at ($(4)+(90:.2)$) {$a_4$};

							\node () [] at ($(0')+(180:.2)$) {$b_0$};
							\node () [] at ($(1')+(-90:.3)$) {};
							\node () [] at ($(2')+(180:.2)$) {$b_2$};
							\node () [] at ($(3')+(0:.2)$) {$b_3$};
							\node () [] at ($(4')+(0:.3)$) {$b_4$};

							\draw[line width=1.3pt,color=gray,<-] (0) -- (1);
							\draw[line width=1.5pt,dotted,color=green,->] (2) -- (1);
							\draw[line width=1.5pt,dotted,color=green,->] (3) -- (4);
							\draw[line width=1.5pt,dashed,color=red,->] (4) -- (2);
							\draw[line width=1.3pt,M edge] (2) -- (3);

							\draw[line width=1.5pt,dashed,color=red,<-] (0') -- (1');
							\draw[line width=1.5pt,dashed,color=red,->] (4') -- (2');
							\draw[line width=1.5pt,dotted,color=green,->] (2') -- (1');
							\draw[line width=1.5pt,dotted,color=green,->] (3') -- (4');
							\draw[line width=1.3pt,M edge] (2') -- (3');
					\end{tikzpicture}}
				\end{subfigure}
				\begin{subfigure}{.45\textwidth}
					\centering
					\scalebox{.8}{\begin{tikzpicture}[scale=1.5]
						
							\draw[fatpath,backcolor1] (120:1) -- ($(-1,0)+(120:1)$) -- (120:1) -- (0,0)  (60:1) -- (0,0) -- (60:1) -- (0:1) -- ($(2,0)-(60:1)-(0:1)$) -- (0:1);
						
							\node (0) [black vertex] at ($(-1,0)+(120:1)$) {};
							\node (1) [black vertex] at (120:1) {};
							\node (2) [black vertex] at (0,0) {};
							\node (3) [black vertex] at (0:1) {};
							\node (4) [black vertex] at (60:1) {};
							\node () []		 at ($(2,0)+(60:1)$) {};

							\node (4') [black vertex] at ($(2,0)$) {};
							\node (3') [black vertex] at ($(3,0)-(60:1)$) {};	
							\node (2') [black vertex] at ($(3')-(1,0)$) {};
							\node (1') [black vertex] at ($(2')+(120:1)$) {};
							\node (0') [black vertex] at ($(2')+(180:1)$) {};
							\node (5') [] at ($(4,0)$) {};

							\node () [] at ($(0)+(90:.2)$) {$a_0$};
							\node () [] at ($(1)+(90:.2)$) {$a_1$};
							\node () [] at ($(2)+(180:.3)$) {$a_2$};
							\node () [] at ($(3)+(30:.6)$) {$a_3=b_1$};
							\node () [] at ($(4)+(90:.2)$) {$a_4$};

							\node () [] at ($(0')+(180:.2)$) {$b_0$};
							\node () [] at ($(1')+(-90:.3)$) {};
							\node () [] at ($(2')+(180:.2)$) {$b_2$};
							\node () [] at ($(3')+(0:.2)$) {$b_3$};
							\node () [] at ($(4')+(0:.3)$) {$b_4$};

							\draw[line width=1.3pt,color=gray,<-] (0) -- (1);
							\draw[line width=1.5pt,dotted,color=green,->] (2) -- (1);
							\draw[line width=1.5pt,dotted,color=green,->] (3) -- (4);
							\draw[line width=1.5pt,dashed,color=red,->] (4) -- (2);
							\draw[line width=1.3pt,M edge] (2) -- (3);

							\draw[line width=1.5pt,dashed,color=red,<-] (0') -- (1');
							\draw[line width=1.5pt,dashed,color=red,->] (4') -- (2');
							\draw[line width=1.5pt,dotted,color=green,->] (2') -- (1');
							\draw[line width=1.5pt,dotted,color=green,->] (3') -- (4');
							\draw[line width=1.3pt,M edge] (2') -- (3');
					\end{tikzpicture}}
				\end{subfigure}
				\caption{Exchange of edges between two elements of type~A
					in the proof of Claim~\ref{claim:AAB,AAC}.}
				\label{fig:case3}
			\end{figure}

			Now, let \(T_3\in\D\setminus\{T_1,T_2\}\) be an element of type~A, B, or C,
			and suppose, for a contradiction, that \(T_3\) 
			contains a hanging edge at \(\cv_2(T_2)\). 
			Since \(\cv_1(T_1)=\tr(T_2)\) and \(\cv_2(T_1) = \aux(T_2)\), 
			we have \(a_3=b_4\), \(a_5=a_2=b_1\) and \(b_5=b_2\).
			In what follows, we divide the proof according to the type of \(T_3\).
			
			\smallskip	\noindent
			\textbf{$\mathbf{T_3}$ is of type~A.}
			Let \(T_3 = c_0c_1c_2c_3c_4c_5\), where \(c_2=c_5\) and \(c_2c_3\in M\). 
			Since each vertex is incident to precisely one edge of \(M\)
			we have \(c_3\neq b_2 = \cv_2(T_2)\).
			Therefore, we have \(\cv_2(T_2)\in\{c_1,c_4\}\).
			Suppose that \(\cv_2(T_2)=c_4\).
			Thus, we have \(b_5 = b_2 = c_4\).
			Put \(T_1' = a_0a_1a_2a_4a_3b_5\),
			\(T_2' = b_0b_1b_4b_3b_2c_2\),
			\(T_3' = b_1c_4c_3c_2c_1c_0\)  
			(see Figure~\ref{fig:case5}),
			and let
			\(\D' = \big(\D\setminus\{T_1,T_2,T_3\}\big)\cup\{T_1',T_2',T_3'\}\).
			In what follows, we prove that \(T_1'\), \(T_2'\) and \(T_3'\) are paths.
			By Lemma~\ref{lemma:T'1-is-path}, \(T_1'\) is an element of type~C.
			Since \(G\) is simple, we have \(c_2\notin \{b_1,b_2,b_3,b_4\}\) and \(b_1\notin \{c_2,c_3,c_4\}\).
			By Lemma~\ref{lemma:no-cycles}, \(c_2 \neq b_0,
			~b_1 \neq c_0\). Therefore, $T_2'$ is an element of type~D.						
			If \(b_1 = c_1\), then \(b_2b_1\) and \(c_2c_1\) are two green in edges at \(c_1\), a contradiction.
			Thus, \(T'_3\) is an element of type~B, 
			and hence Definition~\ref{def:complete-commutative}\eqref{def:complete-commutative-types} holds for \(\D'\).
			Analogously to the cases above, we have \(\hang_{\D'}(v) \geq \hang_{\D}(v)\geq 0\) 
			for every \(v\in V(G)\setminus\{a_3,a_4,b_3,c_3\}\).
			Since $a_3,a_4,b_3,c_3$ are not connection vertices in $\D'$,
			Definition~\ref{def:complete-commutative}\eqref{def:complete-commutative-hanging-edge} holds for \(\D'\). 
			Thus, $\D'$ is a complete decomposition such that
			\(\tau(\D')=\tau(\D)-3<\tau(\D)\), a contradiction to the minimality of~\(\D\).
			
			\begin{figure}
				\centering
				\begin{subfigure}{.45\textwidth}
					\centering
					\scalebox{.8}{\begin{tikzpicture}[scale=1.5]
						
							\draw[fatpath,backcolor1] (-0.5,0.8) -- (-1.5,0.8) -- (-0.5,0.8) -- (0,0) -- (0,0) -- (1,0) -- (0.5,0.8) -- (0,0);
							\draw[fatpath,backcolor2] (-0.5,-0.8) -- (-1.5,-0.8)-- (-0.5,-0.8) --(0,-1.6) -- (1,-1.6) -- (0.5,-0.8) -- (0,-1.6);
						
							\node (0) [black vertex] at (-1.5,0.8) {};
							\node (1) [black vertex] at (-0.5,0.8) {};
							\node (2) [black vertex] at (0,0) {};
							\node (3) [black vertex] at (1,0) {};
							\node (4) [black vertex] at (0.5,0.8) {};
							\node () []		 at ($(0,0)+(60:1)$) {};

							\node (4') [black vertex] at ($(1,0)$) {};
							\node (3') [black vertex] at ($(3)+(0.5,-0.8)$) {};	
							\node (2') [black vertex] at ($(2)+(0.5,-0.8)$) {};
							\node (1') [black vertex] at ($(0,0)$) {};
							\node (0') [black vertex] at ($(2')-(1,0)$) {};
							\node (5') [] at ($(2,0)$) {};

							\node (4'') [black vertex] at ($(2')$) {};
							\node (3'') [black vertex] at ($(3)-(0,1.6)$) {};	
							\node (2'') [black vertex] at ($(2)-(0,1.6)$) {};
							\node (1'') [black vertex] at ($(4'')-(1,0)$) {};
							\node (0'') [black vertex] at ($(1'')+(-1,0)$) {};
							\node (5'') [] at ($(2,0)$) {};

							\node () [] at ($(0)+(180:.2)$) {$a_0$};
							\node () [] at ($(1)+(90:.2)$) {$a_1$};
							\node () [] at ($(2)+(90:.3)$) {};
							\node () [] at ($(3)+(0:.6)$) {$a_3=b_4$};
							\node () [] at ($(4)+(90:.2)$) {$a_4$};

							\node () [] at ($(0')+(-90:.3)-(0.35,0)$) {$b_0=c_1$};
							\node () [] at ($(1')+(180:.4)-(0.3,0)$) {$b_1=a_2$};
							\node () [] at ($(2')+(-90:.25)+(0.7,0)$) {$b_2=c_4$};
							\node () [] at ($(3')+(0:.3)$) {$b_3$};
							\node () [] at ($(4')+(-90:.3)$) {};

							\node () [] at ($(0'')+(180:.2)$) {$c_0$};
							\node () [] at ($(1'')+(225:.4)$) {};
							\node () [] at ($(2'')+(180:.2)$) {$c_2$};
							\node () [] at ($(3'')+(0:.3)$) {$c_3$};
							\node () [] at ($(4'')+(90:.3)$) {};

							\draw[line width=1.3pt,color=gray,<-] (0) -- (1);
							\draw[line width=1.5pt,dotted,color=green,->] (2) -- (1);
							\draw[line width=1.5pt,dotted,color=green,->] (3) -- (4);
							\draw[line width=1.5pt,dashed,color=red,->] (4) -- (2);
							\draw[line width=1.3pt,M edge] (2) -- (3);

							\draw[line width=1.5pt,dashed,color=red,<-] (0') -- (1');
							\draw[line width=1.5pt,dashed,color=red,->] (4') -- (2');
							\draw[line width=1.5pt,dotted,color=green,->] (2') -- (1');
							\draw[line width=1.5pt,dotted,color=green,->] (3') -- (4');
							\draw[line width=1.3pt,M edge] (2') -- (3');

							\draw[line width=1.3pt,color=gray,<-] (0'') -- (1'');
							\draw[line width=1.5pt,dashed,color=red,->] (4'') -- (2'');
							\draw[line width=1.5pt,dotted,color=green,->] (2'') -- (1'');
							\draw[line width=1.5pt,dotted,color=green,->] (3'') -- (4'');
							\draw[line width=1.3pt,M edge] (2'') -- (3'');
					\end{tikzpicture}}
				\end{subfigure}
				\begin{subfigure}{.45\textwidth}
					\centering
					\scalebox{.8}{\begin{tikzpicture}[scale=1.5]
						
							\draw[fatpath,backcolor1] (-0.5,0.8) -- (-1.5,0.8) -- (-0.5,0.8) -- (0,0) -- (0.5,0.8) -- (1,0) -- (0.5,-0.8);
							\draw[fatpath,backcolor2] (-0.5,-0.8) -- (-1.5,-0.8)-- (-0.5,-0.8) --(0,-1.6) -- (1,-1.6) -- (0.5,-0.8) -- (0,0);
						
							\node (0) [black vertex] at (-1.5,0.8) {};
							\node (1) [black vertex] at (-0.5,0.8) {};
							\node (2) [black vertex] at (0,0) {};
							\node (3) [black vertex] at (1,0) {};
							\node (4) [black vertex] at (0.5,0.8) {};
							\node () []		 at ($(0,0)+(60:1)$) {};

							\node (4') [black vertex] at ($(1,0)$) {};
							\node (3') [black vertex] at ($(3)+(0.5,-0.8)$) {};	
							\node (2') [black vertex] at ($(2)+(0.5,-0.8)$) {};
							\node (1') [black vertex] at ($(0,0)$) {};
							\node (0') [black vertex] at ($(2')-(1,0)$) {};
							\node (5') [] at ($(2,0)$) {};

							\node (4'') [black vertex] at ($(2')$) {};
							\node (3'') [black vertex] at ($(3)-(0,1.6)$) {};	
							\node (2'') [black vertex] at ($(2)-(0,1.6)$) {};
							\node (1'') [black vertex] at ($(4'')-(1,0)$) {};
							\node (0'') [black vertex] at ($(1'')+(-1,0)$) {};
							\node (5'') [] at ($(2,0)$) {};

							\node () [] at ($(0)+(180:.2)$) {$a_0$};
							\node () [] at ($(1)+(90:.2)$) {$a_1$};
							\node () [] at ($(2)+(90:.3)$) {};
							\node () [] at ($(3)+(0:.6)$) {$a_3=b_4$};
							\node () [] at ($(4)+(90:.2)$) {$a_4$};

							\node () [] at ($(0')+(-90:.3)-(0.35,0)$) {$b_0=c_1$};
							\node () [] at ($(1')+(180:.4)-(0.3,0)$) {$b_1=a_2$};
							\node () [] at ($(2')+(-90:.25)+(0.7,0)$) {$b_2=c_4$};
							\node () [] at ($(3')+(0:.3)$) {$b_3$};
							\node () [] at ($(4')+(-90:.3)$) {};

							\node () [] at ($(0'')+(180:.2)$) {$c_0$};
							\node () [] at ($(1'')+(225:.4)$) {};
							\node () [] at ($(2'')+(180:.2)$) {$c_2$};
							\node () [] at ($(3'')+(0:.3)$) {$c_3$};
							\node () [] at ($(4'')+(90:.3)$) {};

							\draw[line width=1.3pt,color=gray,<-] (0) -- (1);
							\draw[line width=1.5pt,dotted,color=green,->] (2) -- (1);
							\draw[line width=1.5pt,dotted,color=green,->] (3) -- (4);
							\draw[line width=1.5pt,dashed,color=red,->] (4) -- (2);
							\draw[line width=1.3pt,M edge] (2) -- (3);

							\draw[line width=1.5pt,dashed,color=red,<-] (0') -- (1');
							\draw[line width=1.5pt,dashed,color=red,->] (4') -- (2');
							\draw[line width=1.5pt,dotted,color=green,->] (2') -- (1');
							\draw[line width=1.5pt,dotted,color=green,->] (3') -- (4');
							\draw[line width=1.3pt,M edge] (2') -- (3');

							\draw[line width=1.3pt,color=gray,<-] (0'') -- (1'');
							\draw[line width=1.5pt,dashed,color=red,->] (4'') -- (2'');
							\draw[line width=1.5pt,dotted,color=green,->] (2'') -- (1'');
							\draw[line width=1.5pt,dotted,color=green,->] (3'') -- (4'');
							\draw[line width=1.3pt,M edge] (2'') -- (3'');
					\end{tikzpicture}}
				\end{subfigure}
				\caption{Exchange of edges between three elements of type~A
					in the proof of Claim~\ref{claim:AAB,AAC}.}
				\label{fig:case5}
			\end{figure}
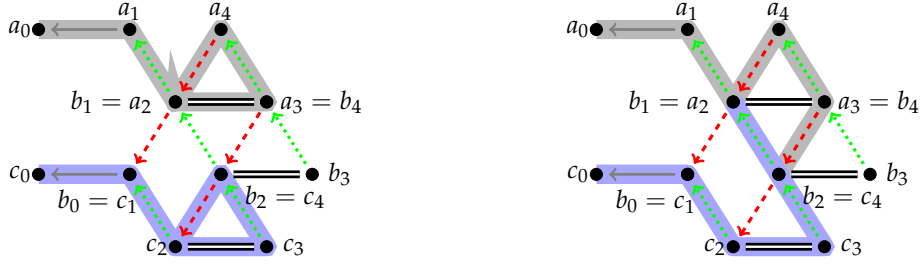
			
			Thus, we may assume \(\cv_2(T_2) = c_1\).
			This implies that \(b_5=b_2=c_1\),
			and hence we have \(b_3 = b_5 - r - g = c_1 - g-r = c_4\).
			Put \(T_1' = a_0a_1a_2a_4a_3b_2\),
			\(T_2' = b_0b_1b_4b_3b_2c_0\),
			\(T_3' = b_1c_1c_2c_3c_4c_2\) (see Figure~\ref{fig:case4}) and
			\(\D' = \big(\D\setminus\{T_1,T_2,T_3\}\big)\cup\{T_1',T_2',T_3'\}\).
			Again, by Lemma~\ref{lemma:T'1-is-path}, \(T_1'\) is an element of type~C.
			We claim that  \(T_2'\), \(T_3'\)  are, respectively, of type~D and A.
			Since $G$ is simple,
			\(c_0\notin \{b_1,b_2,b_3,b_4\}\) and
			\(b_1\notin \{c_1,c_2,c_4\}\).
			By Lemma~\ref{lemma:no-cycles} we have \(c_0 \neq b_0\). 
			Therefore, $T_2'$ is  of type~D.	
			Finally, if $b_1 = c_3$, then $d(b_1)\geq 7>5$, 
			a contradiction.					
			Thus, \(T'_3\) is an element of type~A, 
			and hence Definition~\ref{def:complete-commutative}\eqref{def:complete-commutative-types} holds for \(\D'\).
			Analogously to the case above, 
			we have $\hang_{\D'}(v) \geq \hang_{\D}(v)\geq 0$ for every $v \in V(G)\setminus\{a_4,a_3,b_3\}$.
			Since $a_4,a_3,b_3$ are not connection vertices in $\D'$, 
			Definition~\ref{def:complete-commutative}\eqref{def:complete-commutative-hanging-edge} holds for \(\D'\).
			Thus, \(\D'\) is a complete decomposition such that \({\tau(\D') = \tau(\D) -2<\tau(\D)}\), a contradiction to the minimality of~\(\D\).

			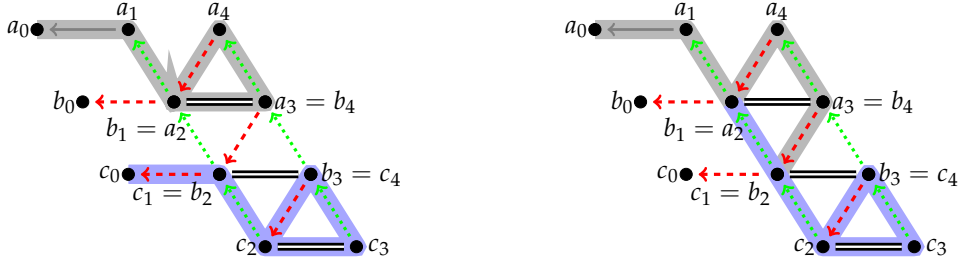
\begin{figure}
				\centering
				\begin{subfigure}{.45\textwidth}
					\centering
					\scalebox{.8}{\begin{tikzpicture}[scale=1.5]
						
							\draw[fatpath,backcolor1] (-0.5,0.8) -- (-1.5,0.8) -- (-0.5,0.8) -- (0,0) -- (0,0) -- (1,0) -- (0.5,0.8) -- (0,0);
							\draw[fatpath,backcolor2] (0.5,-0.8) -- (-0.5,-0.8)-- (0.5,-0.8) --(1,-1.6) -- (2,-1.6) -- (1.5,-0.8) -- (1,-1.6);

							\node (0) [black vertex] at (-1.5,0.8) {};
							\node (1) [black vertex] at (-0.5,0.8) {};
							\node (2) [black vertex] at (0,0) {};
							\node (3) [black vertex] at (1,0) {};
							\node (4) [black vertex] at (0.5,0.8) {};
							\node () []		 at ($(1,0)+(60:1)$) {};

							\node (4') [black vertex] at ($(1,0)$) {};
							\node (3') [black vertex] at ($(3)+(0.5,-0.8)$) {};	
							\node (2') [black vertex] at ($(2)+(0.5,-0.8)$) {};
							\node (1') [black vertex] at ($(0,0)$) {};
							\node (0') [black vertex] at ($(0,0)+(-1,0)$) {};
							\node (5') [] at ($(3,0)$) {};

							\node (4'') [black vertex] at ($(3')$) {};
							\node (3'') [black vertex] at ($(3)+(1,-1.6)$) {};	
							\node (2'') [black vertex] at ($(2)+(1,-1.6)$) {};
							\node (1'') [black vertex] at ($(2')$) {};
							\node (0'') [black vertex] at ($(1'')+(-1,0)$) {};
							\node (5'') [] at ($(3,0)$) {};

							\node () [] at ($(0)+(180:.2)$) {$a_0$};
							\node () [] at ($(1)+(90:.2)$) {$a_1$};
							\node () [] at ($(2)+(90:.3)$) {};
							\node () [] at ($(3)+(0:.55)$) {$a_3=b_4$};
							\node () [] at ($(4)+(90:.2)$) {$a_4$};

							\node () [] at ($(0')+(180:.2)$) {$b_0$};
							\node () [] at ($(1')+(-90:.3)-(0.3,0)$) {$b_1=a_2$};
							\node () [] at ($(2')+(90:.35)$) {};
							\node () [] at ($(3')+(0:.55)$) {$b_3=c_4$};
							\node () [] at ($(4')+(-90:.3)$) {};

							\node () [] at ($(0'')+(180:.2)$) {$c_0$};
							\node () [] at ($(1'')+(225:.3)-(0.3,0)$) {$c_1=b_2$};
							\node () [] at ($(2'')+(180:.2)$) {$c_2$};
							\node () [] at ($(3'')+(0:.25)$) {$c_3$};
							\node () [] at ($(4'')+(270:.35)$) {};

							\draw[line width=1.3pt,color=gray,<-] (0) -- (1);
							\draw[line width=1.5pt,dotted,color=green,->] (2) -- (1);
							\draw[line width=1.5pt,dotted,color=green,->] (3) -- (4);
							\draw[line width=1.5pt,dashed,color=red,->] (4) -- (2);
							\draw[line width=1.3pt,M edge] (2) -- (3);

							\draw[line width=1.5pt,dashed,color=red,<-] (0') -- (1');
							\draw[line width=1.5pt,dashed,color=red,->] (4') -- (2');
							\draw[line width=1.5pt,dotted,color=green,->] (2') -- (1');
							\draw[line width=1.5pt,dotted,color=green,->] (3') -- (4');
							\draw[line width=1.3pt,M edge] (2') -- (3');

							\draw[line width=1.5pt,dashed,color=red,<-] (0'') -- (1'');
							\draw[line width=1.5pt,dashed,color=red,->] (4'') -- (2'');
							\draw[line width=1.5pt,dotted,color=green,->] (2'') -- (1'');
							\draw[line width=1.5pt,dotted,color=green,->] (3'') -- (4'');
							\draw[line width=1.3pt,M edge] (2'') -- (3'');
					\end{tikzpicture}}
				\end{subfigure}
				\begin{subfigure}{.45\textwidth}
					\centering
					\scalebox{.8}{\begin{tikzpicture}[scale=1.5]
						
							\draw[fatpath,backcolor1] (-0.5,0.8) -- (-1.5,0.8) -- (-0.5,0.8) -- (0,0) -- (0.5,0.8) -- (1,0) -- (0.5,-0.8);
							\draw[fatpath,backcolor2] (0,0)-- (0.5,-0.8) --(1,-1.6) -- (2,-1.6) -- (1.5,-0.8) -- (1,-1.6);

							\node (0) [black vertex] at (-1.5,0.8) {};
							\node (1) [black vertex] at (-0.5,0.8) {};
							\node (2) [black vertex] at (0,0) {};
							\node (3) [black vertex] at (1,0) {};
							\node (4) [black vertex] at (0.5,0.8) {};
							\node () []		 at ($(1,0)+(60:1)$) {};

							\node (4') [black vertex] at ($(1,0)$) {};
							\node (3') [black vertex] at ($(3)+(0.5,-0.8)$) {};	
							\node (2') [black vertex] at ($(2)+(0.5,-0.8)$) {};
							\node (1') [black vertex] at ($(0,0)$) {};
							\node (0') [black vertex] at ($(0,0)+(-1,0)$) {};
							\node (5') [] at ($(3,0)$) {};

							\node (4'') [black vertex] at ($(3')$) {};
							\node (3'') [black vertex] at ($(3)+(1,-1.6)$) {};	
							\node (2'') [black vertex] at ($(2)+(1,-1.6)$) {};
							\node (1'') [black vertex] at ($(2')$) {};
							\node (0'') [black vertex] at ($(1'')+(-1,0)$) {};
							\node (5'') [] at ($(3,0)$) {};

							\node () [] at ($(0)+(180:.2)$) {$a_0$};
							\node () [] at ($(1)+(90:.2)$) {$a_1$};
							\node () [] at ($(2)+(90:.3)$) {};
							\node () [] at ($(3)+(0:.55)$) {$a_3=b_4$};
							\node () [] at ($(4)+(90:.2)$) {$a_4$};

							\node () [] at ($(0')+(180:.2)$) {$b_0$};
							\node () [] at ($(1')+(-90:.3)-(0.3,0)$) {$b_1=a_2$};
							\node () [] at ($(2')+(90:.35)$) {};
							\node () [] at ($(3')+(0:.55)$) {$b_3=c_4$};
							\node () [] at ($(4')+(-90:.3)$) {};

							\node () [] at ($(0'')+(180:.2)$) {$c_0$};
							\node () [] at ($(1'')+(225:.3)-(0.3,0)$) {$c_1=b_2$};
							\node () [] at ($(2'')+(180:.2)$) {$c_2$};
							\node () [] at ($(3'')+(0:.25)$) {$c_3$};
							\node () [] at ($(4'')+(270:.35)$) {};

							\draw[line width=1.3pt,color=gray,<-] (0) -- (1);
							\draw[line width=1.5pt,dotted,color=green,->] (2) -- (1);
							\draw[line width=1.5pt,dotted,color=green,->] (3) -- (4);
							\draw[line width=1.5pt,dashed,color=red,->] (4) -- (2);
							\draw[line width=1.3pt,M edge] (2) -- (3);

							\draw[line width=1.5pt,dashed,color=red,<-] (0') -- (1');
							\draw[line width=1.5pt,dashed,color=red,->] (4') -- (2');
							\draw[line width=1.5pt,dotted,color=green,->] (2') -- (1');
							\draw[line width=1.5pt,dotted,color=green,->] (3') -- (4');
							\draw[line width=1.3pt,M edge] (2') -- (3');

							\draw[line width=1.5pt,dashed,color=red,<-] (0'') -- (1'');
							\draw[line width=1.5pt,dashed,color=red,->] (4'') -- (2'');
							\draw[line width=1.5pt,dotted,color=green,->] (2'') -- (1'');
							\draw[line width=1.5pt,dotted,color=green,->] (3'') -- (4'');
							\draw[line width=1.3pt,M edge] (2'') -- (3'');
					\end{tikzpicture}}
				\end{subfigure}
				\caption{Exchange of edges between three elements of type~A
					in the proof of Claim~\ref{claim:AAB,AAC}.}
				\label{fig:case4}
			\end{figure}		
			
			\smallskip	\noindent
			\textbf{$\mathbf{T_3}$ is of type~B.}
			Let \(T_3 = c_0c_1c_2c_3c_4c_5\) be an element of type~B.
			Since \(T_3\) contains a hanging edge on \(\cv_2(T_2)=b_2\), we have \(b_2\in\{c_1,c_4\}\).
			By symmetry we may assume \(b_2=c_1\). 
			Thus, put \(T_1' = a_0a_1a_2a_4a_3b_2\),
			\(T_2' = b_0b_1b_4b_3b_2c_0\),
			\(T_3' = b_1c_1c_2c_3c_4c_5\)  (see
			Figure~\ref{fig:case6})
			and let
			\(\D' = \big(\D\setminus\{T_1,T_2,T_3\}\big)\cup\{T_1',T_2',T_3'\}\).
			Again, by Lemma~\ref{lemma:T'1-is-path}, \(T_1'\) is an element of type~C.
			We prove that \(T_2'\) and \(T_3'\) are, 
			respectively, of type~D and B.
			Since $G$ is simple, 
			we have \(c_0\notin \{b_1,b_2,b_3,b_4\}\) and \(b_1\notin \{c_1,c_2\}\).
			By Lemma~\ref{lemma:no-cycles}, we have \(c_0 \neq b_0\) and \(b_1 \neq c_5\). 
			Therefore, $T_2'$ is an element of type~D.			
			Since $c_4=c_3+g$ and $b_1=c_1+g$, if $c_4=b_1$, then $c_3=c_1$, a
			contradiction.
			If $b_1 \in \{c_3,c_4\}$, then $d(b_1)\geq7>5$, a contradiction.
			Therefore, $T_3'$ is an element of type~B.
			Analogously to the case above, \(\D'\) is a complete decomposition 
			such that \(\tau(\D') = \tau(\D) -2<\tau(\D)\), a contradiction to the minimality of \(\D\).

			\begin{figure}[h]
				\centering
				\begin{subfigure}{.45\textwidth}
					\centering
					\scalebox{.8}{\begin{tikzpicture}[scale=1.5]
							
							\draw[fatpath,backcolor1] (-0.5,0.8) -- (-1.5,0.8) -- (-0.5,0.8) -- (0,0) -- (0,0) -- (1,0) -- (0.5,0.8) -- (0,0);
							\draw[fatpath,backcolor2] (0.5,-0.8) -- (-0.5,-0.8)-- (0.5,-0.8) --(0.5,-1.8) -- (1.5,-1.8) -- (2.5,-1.8) -- (3.5,-1.8) -- (2.5,-1.8);

							\node (0) [black vertex] at (-1.5,0.8) {};
							\node (1) [black vertex] at (-0.5,0.8) {};
							\node (2) [black vertex] at (0,0) {};
							\node (3) [black vertex] at (1,0) {};
							\node (4) [black vertex] at (0.5,0.8) {};
							\node () []		 at ($(2,0)+(60:1)$) {};

							\node (4') [black vertex] at ($(1,0)$) {};
							\node (3') [black vertex] at ($(3)+(0.5,-0.8)$) {};	
							\node (2') [black vertex] at ($(2)+(0.5,-0.8)$) {};
							\node (1') [black vertex] at ($(0,0)$) {};
							\node (0') [black vertex] at ($(0,0)+(-1,0)$) {};
							\node (5') [] at ($(4,0)$) {};

							\node (1'') [black vertex] at ($(2')$) {};
							\node (0'') [black vertex] at ($(1'')+(-1,0)$) {};	
							\node (2'') [black vertex] at ($(1'')-(0,1)$) {};
							\node (3'') [black vertex] at ($(2'')+(1,0)$) {};
							\node (4'') [black vertex] at ($(3'')+(1,0)$) {};
							\node (5'') [black vertex] at ($(4'')+(1,0)$) {};

							\node () [] at ($(0)+(180:.2)$) {$a_0$};
							\node () [] at ($(1)+(90:.2)$) {$a_1$};
							\node () [] at ($(2)+(90:.3)$) {};
							\node () [] at ($(3)+(0:.55)$) {$a_3=b_4$};
							\node () [] at ($(4)+(90:.2)$) {$a_4$};

							\node () [] at ($(0')+(180:.2)$) {$b_0$};
							\node () [] at ($(1')+(-90:.25)-(0.4,0)$) {$b_1=a_2$};
							\node () [] at ($(2')+(90:.35)$) {};
							\node () [] at ($(3')+(0:.25)$) {$b_3$};
							\node () [] at ($(4')+(-90:.3)$) {};

							\node () [] at ($(0'')+(180:.2)$) {$c_0$};
							\node () [] at ($(1'')+(225:.3)+(0.85,0)$) {$c_1=b_2$};
							\node () [] at ($(2'')+(180:.25)$) {$c_2$};
							\node () [] at ($(3'')+(90:.25)$) {$c_3$};
							\node () [] at ($(4'')+(90:.25)$) {$c_4$};
							\node () [] at ($(5'')+(90:.25)$) {$c_5$};

							\draw[line width=1.3pt,color=gray,<-] (0) -- (1);
							\draw[line width=1.5pt,dotted,color=green,->] (2) -- (1);
							\draw[line width=1.5pt,dotted,color=green,->] (3) -- (4);
							\draw[line width=1.5pt,dashed,color=red,->] (4) -- (2);
							\draw[line width=1.3pt,M edge] (2) -- (3);

							\draw[line width=1.5pt,dashed,color=red,<-] (0') -- (1');
							\draw[line width=1.5pt,dashed,color=red,->] (4') -- (2');
							\draw[line width=1.5pt,dotted,color=green,->] (2') -- (1');
							\draw[line width=1.5pt,dotted,color=green,->] (3') -- (4');
							\draw[line width=1.3pt,M edge] (2') -- (3');

							\draw[line width=1.5pt,dashed,color=red,<-] (0'') -- (1'');
							\draw[line width=1.3pt,color=gray,->] (4'') -- (5'');
							\draw[line width=1.5pt,dotted,color=green,->] (2'') -- (1'');
							\draw[line width=1.5pt,dotted,color=green,->] (3'') -- (4'');
							\draw[line width=1.3pt,M edge] (2'') -- (3'');
					\end{tikzpicture}}
				\end{subfigure}
				\begin{subfigure}{.45\textwidth}
					\centering
					\scalebox{.8}{\begin{tikzpicture}[scale=1.5]
						
							\draw[fatpath,backcolor1] (-0.5,0.8) -- (-1.5,0.8) -- (-0.5,0.8) -- (0,0) -- (0.5,0.8) -- (1,0) -- (0.5,-0.8);
							\draw[fatpath,backcolor2] (0,0)-- (0.5,-0.8) --(0.5,-1.8) -- (1.5,-1.8) -- (2.5,-1.8) -- (3.5,-1.8) -- (2.5,-1.8);

							\node (0) [black vertex] at (-1.5,0.8) {};
							\node (1) [black vertex] at (-0.5,0.8) {};
							\node (2) [black vertex] at (0,0) {};
							\node (3) [black vertex] at (1,0) {};
							\node (4) [black vertex] at (0.5,0.8) {};
							\node () []		 at ($(2,0)+(60:1)$) {};

							\node (4') [black vertex] at ($(1,0)$) {};
							\node (3') [black vertex] at ($(3)+(0.5,-0.8)$) {};	
							\node (2') [black vertex] at ($(2)+(0.5,-0.8)$) {};
							\node (1') [black vertex] at ($(0,0)$) {};
							\node (0') [black vertex] at ($(0,0)+(-1,0)$) {};
							\node (5') [] at ($(4,0)$) {};

							\node (1'') [black vertex] at ($(2')$) {};
							\node (0'') [black vertex] at ($(1'')+(-1,0)$) {};	
							\node (2'') [black vertex] at ($(1'')-(0,1)$) {};
							\node (3'') [black vertex] at ($(2'')+(1,0)$) {};
							\node (4'') [black vertex] at ($(3'')+(1,0)$) {};
							\node (5'') [black vertex] at ($(4'')+(1,0)$) {};

							\node () [] at ($(0)+(180:.2)$) {$a_0$};
							\node () [] at ($(1)+(90:.2)$) {$a_1$};
							\node () [] at ($(2)+(90:.3)$) {};
							\node () [] at ($(3)+(0:.55)$) {$a_3=b_4$};
							\node () [] at ($(4)+(90:.2)$) {$a_4$};

							\node () [] at ($(0')+(180:.2)$) {$b_0$};
							\node () [] at ($(1')+(-90:.25)-(0.4,0)$) {$b_1=a_2$};
							\node () [] at ($(2')+(90:.35)$) {};
							\node () [] at ($(3')+(0:.25)$) {$b_3$};
							\node () [] at ($(4')+(-90:.3)$) {};

							\node () [] at ($(0'')+(180:.2)$) {$c_0$};
							\node () [] at ($(1'')+(225:.3)+(0.85,0)$) {$c_1=b_2$};
							\node () [] at ($(2'')+(180:.25)$) {$c_2$};
							\node () [] at ($(3'')+(90:.25)$) {$c_3$};
							\node () [] at ($(4'')+(90:.25)$) {$c_4$};
							\node () [] at ($(5'')+(90:.25)$) {$c_5$};

							\draw[line width=1.3pt,color=gray,<-] (0) -- (1);
							\draw[line width=1.5pt,dotted,color=green,->] (2) -- (1);
							\draw[line width=1.5pt,dotted,color=green,->] (3) -- (4);
							\draw[line width=1.5pt,dashed,color=red,->] (4) -- (2);
							\draw[line width=1.3pt,M edge] (2) -- (3);

							\draw[line width=1.5pt,dashed,color=red,<-] (0') -- (1');
							\draw[line width=1.5pt,dashed,color=red,->] (4') -- (2');
							\draw[line width=1.5pt,dotted,color=green,->] (2') -- (1');
							\draw[line width=1.5pt,dotted,color=green,->] (3') -- (4');
							\draw[line width=1.3pt,M edge] (2') -- (3');

							\draw[line width=1.5pt,dashed,color=red,<-] (0'') -- (1'');
							\draw[line width=1.3pt,color=gray,->] (4'') -- (5'');
							\draw[line width=1.5pt,dotted,color=green,->] (2'') -- (1'');
							\draw[line width=1.5pt,dotted,color=green,->] (3'') -- (4'');
							\draw[line width=1.3pt,M edge] (2'') -- (3'');
					\end{tikzpicture}}
				\end{subfigure}
				\caption{Exchange of edges between two elements of type~A
					and an element of type~B
					in the proof of Claim~\ref{claim:AAB,AAC}.
				}
				\label{fig:case6}
			\end{figure}
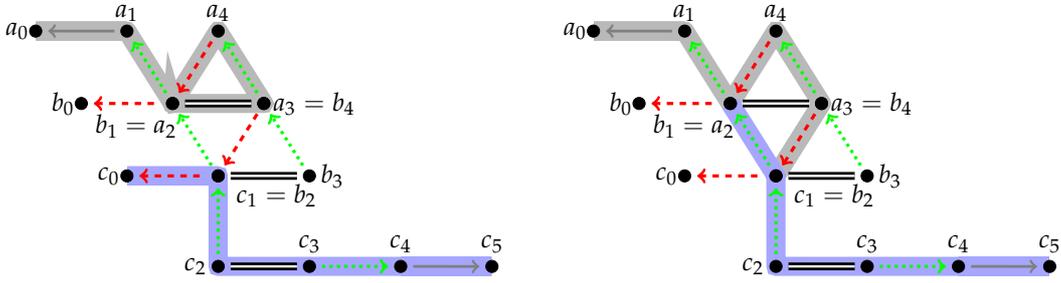
			
			\smallskip	\noindent
			\textbf{$\mathbf{T_3}$ is of type~C.}
			Let \(T_3 = c_0c_1c_2c_3c_4c_5\) be an element of type~C, where \(c_3c_2\in F_r\).
			This implies that \(c_4c_3\in F_g\).
			Since \(T_3\) contains a hanging edge on \(\cv_2(T_2)=b_2\), we have \(b_2\in\{c_1,c_4\}\).
			If \(b_2 = c_4\), then \(c_4c_3\) and \(b_2b_1\) are two green out edges of \(b_2\), a contradiction.
			Thus, we may assume \(b_2 = c_1\).
			Put \(T_1' = a_0a_1a_2a_4a_3b_2\),
			\(T_2' = b_0b_1b_4b_3b_2c_0\),
			\(T_3' = b_1c_1c_2c_3c_4c_5\) (see Figure~\ref{fig:case7})
			and let
			\(\D' = \big(\D\setminus\{T_1,T_2,T_3\}\big)\cup\{T_1',T_2',T_3'\}\).
			Again, by Lemma~\ref{lemma:T'1-is-path}, \(T_1'\) is an element of type~C.
			We prove that \(T_2'\) and \(T_3'\) are, respectively,
			of type~D and C.
			Since $G$ is simple, \(c_0\notin \{b_1,b_2,b_3,b_4\}\) and \(b_1\notin \{c_1,c_2\}\).
			By Lemma~\ref{lemma:no-cycles}, we have \(c_0 \neq b_0\) and \(b_1 \neq c_5\). 
			Therefore, $T_2'$ is an element of type~D.			
			Analogously to the case above, 
			If $b_1 \in \{c_3,c_4\}$, then $d(b_1)\geq7>5$, a contradiction.
			Therefore, $T_3'$ is an element of type~C.
			Once more, 
			analogously to the cases above, \(\D'\) is a complete decomposition 
			such that \(\tau(\D') = \tau(\D) -2<\tau(\D)\), a contradiction to the minimality of \(\D\).
		\end{proof}
		
			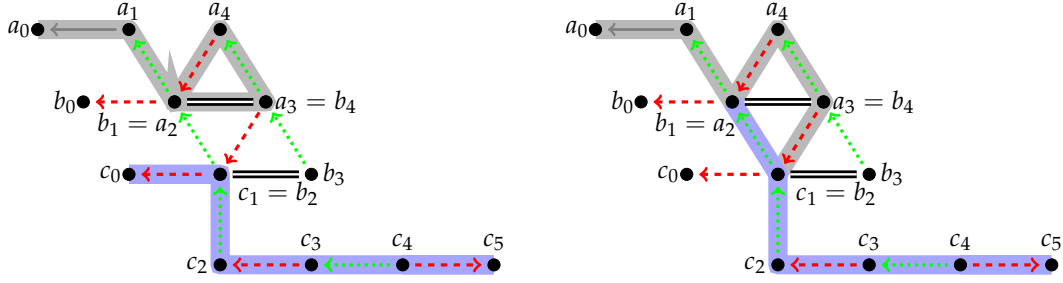
\begin{figure}
				\centering
				\begin{subfigure}{.45\textwidth}
					\centering
					\scalebox{.8}{\begin{tikzpicture}[scale=1.5]
						
							\draw[fatpath,backcolor1] (-0.5,0.8) -- (-1.5,0.8) -- (-0.5,0.8) -- (0,0) -- (0,0) -- (1,0) -- (0.5,0.8) -- (0,0);
							\draw[fatpath,backcolor2] (0.5,-0.8) -- (-0.5,-0.8)-- (0.5,-0.8) --(0.5,-1.8) -- (1.5,-1.8) -- (2.5,-1.8) -- (3.5,-1.8) -- (2.5,-1.8);

							\node (0) [black vertex] at (-1.5,0.8) {};
							\node (1) [black vertex] at (-0.5,0.8) {};
							\node (2) [black vertex] at (0,0) {};
							\node (3) [black vertex] at (1,0) {};
							\node (4) [black vertex] at (0.5,0.8) {};
							\node () []		 at ($(2,0)+(60:1)$) {};

							\node (4') [black vertex] at ($(1,0)$) {};
							\node (3') [black vertex] at ($(3)+(0.5,-0.8)$) {};	
							\node (2') [black vertex] at ($(2)+(0.5,-0.8)$) {};
							\node (1') [black vertex] at ($(0,0)$) {};
							\node (0') [black vertex] at ($(0,0)+(-1,0)$) {};
							\node (5') [] at ($(4,0)$) {};

							\node (1'') [black vertex] at ($(2')$) {};
							\node (0'') [black vertex] at ($(1'')+(-1,0)$) {};	
							\node (2'') [black vertex] at ($(1'')-(0,1)$) {};
							\node (3'') [black vertex] at ($(2'')+(1,0)$) {};
							\node (4'') [black vertex] at ($(3'')+(1,0)$) {};
							\node (5'') [black vertex] at ($(4'')+(1,0)$) {};

							\node () [] at ($(0)+(180:.2)$) {$a_0$};
							\node () [] at ($(1)+(90:.2)$) {$a_1$};
							\node () [] at ($(2)+(90:.3)$) {};
							\node () [] at ($(3)+(0:.55)$) {$a_3=b_4$};
							\node () [] at ($(4)+(90:.2)$) {$a_4$};

							\node () [] at ($(0')+(180:.2)$) {$b_0$};
							\node () [] at ($(1')+(-90:.25)-(0.4,0)$) {$b_1=a_2$};
							\node () [] at ($(2')+(90:.35)$) {};
							\node () [] at ($(3')+(0:.25)$) {$b_3$};
							\node () [] at ($(4')+(-90:.3)$) {};

							\node () [] at ($(0'')+(180:.2)$) {$c_0$};
							\node () [] at ($(1'')+(225:.3)+(0.85,0)$) {$c_1=b_2$};
							\node () [] at ($(2'')+(180:.25)$) {$c_2$};
							\node () [] at ($(3'')+(90:.25)$) {$c_3$};
							\node () [] at ($(4'')+(90:.25)$) {$c_4$};
							\node () [] at ($(5'')+(90:.25)$) {$c_5$};

							\draw[line width=1.3pt,color=gray,<-] (0) -- (1);
							\draw[line width=1.5pt,dotted,color=green,->] (2) -- (1);
							\draw[line width=1.5pt,dotted,color=green,->] (3) -- (4);
							\draw[line width=1.5pt,dashed,color=red,->] (4) -- (2);
							\draw[line width=1.3pt,M edge] (2) -- (3);

							\draw[line width=1.5pt,dashed,color=red,<-] (0') -- (1');
							\draw[line width=1.5pt,dashed,color=red,->] (4') -- (2');
							\draw[line width=1.5pt,dotted,color=green,->] (2') -- (1');
							\draw[line width=1.5pt,dotted,color=green,->] (3') -- (4');
							\draw[line width=1.3pt,M edge] (2') -- (3');

							\draw[line width=1.5pt,dashed,color=red,<-] (0'') -- (1'');
							\draw[line width=1.5pt,dashed,color=red,->] (4'') -- (5'');
							\draw[line width=1.5pt,dotted,color=green,->] (2'') -- (1'');
							\draw[line width=1.5pt,dotted,color=green,->] (4'') -- (3'');
							\draw[line width=1.5pt,dashed,color=red,->] (3'') -- (2'');
					\end{tikzpicture}}
				\end{subfigure}
				\begin{subfigure}{.45\textwidth}
					\centering
					\scalebox{.8}{\begin{tikzpicture}[scale=1.5]
						
							\draw[fatpath,backcolor1] (-0.5,0.8) -- (-1.5,0.8) -- (-0.5,0.8) -- (0,0) -- (0.5,0.8) -- (1,0) -- (0.5,-0.8);
							\draw[fatpath,backcolor2] (0,0)-- (0.5,-0.8) --(0.5,-1.8) -- (1.5,-1.8) -- (2.5,-1.8) -- (3.5,-1.8) -- (2.5,-1.8);

							\node (0) [black vertex] at (-1.5,0.8) {};
							\node (1) [black vertex] at (-0.5,0.8) {};
							\node (2) [black vertex] at (0,0) {};
							\node (3) [black vertex] at (1,0) {};
							\node (4) [black vertex] at (0.5,0.8) {};
							\node () []		 at ($(2,0)+(60:1)$) {};

							\node (4') [black vertex] at ($(1,0)$) {};
							\node (3') [black vertex] at ($(3)+(0.5,-0.8)$) {};	
							\node (2') [black vertex] at ($(2)+(0.5,-0.8)$) {};
							\node (1') [black vertex] at ($(0,0)$) {};
							\node (0') [black vertex] at ($(0,0)+(-1,0)$) {};
							\node (5') [] at ($(4,0)$) {};

							\node (1'') [black vertex] at ($(2')$) {};
							\node (0'') [black vertex] at ($(1'')+(-1,0)$) {};	
							\node (2'') [black vertex] at ($(1'')-(0,1)$) {};
							\node (3'') [black vertex] at ($(2'')+(1,0)$) {};
							\node (4'') [black vertex] at ($(3'')+(1,0)$) {};
							\node (5'') [black vertex] at ($(4'')+(1,0)$) {};

							\node () [] at ($(0)+(180:.2)$) {$a_0$};
							\node () [] at ($(1)+(90:.2)$) {$a_1$};
							\node () [] at ($(2)+(90:.3)$) {};
							\node () [] at ($(3)+(0:.55)$) {$a_3=b_4$};
							\node () [] at ($(4)+(90:.2)$) {$a_4$};

							\node () [] at ($(0')+(180:.2)$) {$b_0$};
							\node () [] at ($(1')+(-90:.25)-(0.4,0)$) {$b_1=a_2$};
							\node () [] at ($(2')+(90:.35)$) {};
							\node () [] at ($(3')+(0:.25)$) {$b_3$};
							\node () [] at ($(4')+(-90:.3)$) {};

							\node () [] at ($(0'')+(180:.2)$) {$c_0$};
							\node () [] at ($(1'')+(225:.3)+(0.85,0)$) {$c_1=b_2$};
							\node () [] at ($(2'')+(180:.25)$) {$c_2$};
							\node () [] at ($(3'')+(90:.25)$) {$c_3$};
							\node () [] at ($(4'')+(90:.25)$) {$c_4$};
							\node () [] at ($(5'')+(90:.25)$) {$c_5$};

							\draw[line width=1.3pt,color=gray,<-] (0) -- (1);
							\draw[line width=1.5pt,dotted,color=green,->] (2) -- (1);
							\draw[line width=1.5pt,dotted,color=green,->] (3) -- (4);
							\draw[line width=1.5pt,dashed,color=red,->] (4) -- (2);
							\draw[line width=1.3pt,M edge] (2) -- (3);

							\draw[line width=1.5pt,dashed,color=red,<-] (0') -- (1');
							\draw[line width=1.5pt,dashed,color=red,->] (4') -- (2');
							\draw[line width=1.5pt,dotted,color=green,->] (2') -- (1');
							\draw[line width=1.5pt,dotted,color=green,->] (3') -- (4');
							\draw[line width=1.3pt,M edge] (2') -- (3');

							\draw[line width=1.5pt,dashed,color=red,<-] (0'') -- (1'');
							\draw[line width=1.5pt,dashed,color=red,->] (4'') -- (5'');
							\draw[line width=1.5pt,dotted,color=green,->] (2'') -- (1'');
							\draw[line width=1.5pt,dotted,color=green,->] (4'') -- (3'');
							\draw[line width=1.5pt,dashed,color=red,->] (3'') -- (2'');
					\end{tikzpicture}}
				\end{subfigure}
				\caption{Exchange of edges between two elements of type~A
					and an element of type~C
					in the proof of Claim~\ref{claim:AAB,AAC}.
				}
				\label{fig:case7}
			\end{figure}	
		
		\begin{claim}\label{claim:no-free-element}
			There is no free element of type~A.
		\end{claim}
		
		\begin{proof}
			Suppose, for a contradiction, that \(\D\)
			contains a free element, say \(T_1\), of type~A.
			By Definition~\ref{def:complete-commutative}\eqref{def:complete-commutative-hanging-edge},
			there are two hanging edges \(e_2\) and \(e_2'\) at \(\cv_1(T_1)\).
			We may assume \(e_2 \notin E(T_1)\).
			Let \(T_2\) be the element of \(\D\) that contains \(e_2\).
			By Claim~\ref{claim:AB,AC}, \(T_2\) is not of type~B or C,
			and since \(M\) is a matching, \(T_2\) is not of type~D.
			Thus, \(T_2\) is of type~A.
			By Definition~\ref{def:complete-commutative}\eqref{def:complete-commutative-hanging-edge},
			there is a hanging edge \(e_3\) on \(\cv_2(T_2)\).
			Note that \(e_3\notin E(T_2)\).
			Let \(T_3\) be the element of \(\D\) that contains \(e_3\).
			By Claim~\ref{claim:AAB,AAC}, 
			\(T_3\) is of type~D,
			which implies that there are two edges of \(M\)
			incident to \(\cv_2(T_2)\), a contradiction. 
		\end{proof}	
		
		Now, consider the auxiliary directed graph \(D_\D\) in which \(V(D_\D) = \D\)
		and $(T_1,T_2)$ is an arc of \(D_\D\) if and only if \(\tr(T_2) = \cv_i(T_1)\) for some \(i\in\{1,2\}\).
		It is clear that the elements of type~A in \(\D\) are partitioned into A-chains
		if and only if \(D_\D\) consists of vertex-disjoint directed cycles and isolated vertices.	
		Since every vertex of \(G\) is a connection vertex of at most one element of \(\D\),
		by Claim~\ref{claim:no-free-element}, every vertex of \(D_\D\) has in degree precisely \(1\).
		
		Note also that given two elements \(T_1\) and \(T_2\) we have \(\tr(T_1) \neq \tr(T_2)\),
		otherwise there would be a vertex with two green in edges.
		This implies that every vertex of \(D_\D\) has out degree at most \(2\).
		Now, if \(T_1\) and \(T_2\) are two elements of type~A in \(\D\)
		such that \(\cv_1(T_1) = \tr(T_2) = u_1\),
		by Lemma~\ref{lemma:type12}, we have \(\aux(T_2) = \cv_2(T_1) = u_2\),
		which means that \(E(T_1)\cup E(T_2)\) contains the four edges in \(E(G)\) incident to \(u_1\) and five edges incident to $u_2$,
		and hence, no other element of \(\D\) contains $u_2$, 
		and no other element of \(\D\) has $u_1$ as its tricky vertex.
		This implies that every vertex of \(D_\D\) has out degree at most \(1\),
		and hence \(D_\D\) consists of vertex-disjoint directed cycles and isolated vertices as desired.
	\end{proof}
	
	\subsection{Admissible decompositions}
	
	In this section, we present a new decomposition invariant, which we call \emph{admissibility}, and conclude our proof.
	For that, we introduce an important object, the exceptional pair. Let \(G\) be a \(\{g,r\}\)-graph, and let \(\D\) be a decomposition of \(G\) into trails of length \(5\).
	We say that a pair \((T_1,T_2)\) of elements of \(\D\) is an \emph{exceptional pair}
	if \(T_1\) and \(T_2\) are elements of type~A and C,
	respectively, and can be written as \(T_1 = a_0a_1a_2a_3a_4a_5\) and 
	\(T_2 = b_0b_1b_2b_3b_4b_5\)
	such that \(a_2a_3\in M_{g,r}\),
	\(a_2=a_5 = b_3\),
	and
	\(a_2a_1,a_3a_4,b_2b_1,b_4b_3\in F_g\), 
	\(a_4a_5,b_3b_2,b_4b_5\in F_r\),
	\(a_1a_0,b_1b_0\in M_{g,r}\cup F_g\cup F_r\)
	(see Figure \ref{fig:typeE}).
	Note that since \(G\) is a simple graph, we have \(b_4\neq a_3\).
	Also, if \(2g + 2r \neq 0\), then we have \(b_1 \neq a_3\).
	This yields the following remark.
	
	\begin{remark}\label{remark:exceptional-hanging-edge}
		If \(G\) is a \(\{g,r\}\)-graph for which \(2g+2r\neq 0\) and \((T_1,T_2)\) is an exceptional pair,
		then \(T_2\) does not contain a hanging edge at \(\cv_1(T_1)\).
	\end{remark}

	\begin{figure}
		\centering
		\scalebox{.8}{\begin{tikzpicture}[scale=1.5]
					
				\node (4') [black vertex] at ($(1,0)$) {};
				\node (3') [black vertex] at ($(1.5,-0.8)$) {};	
				\node (2') [black vertex] at ($(0.5,-0.8)$) {};
				\node (1') [black vertex] at ($(0,0)$) {};
				\node (0') [black vertex] at ($(0,0)+(-1,0)$) {};
				\node (5') [] at ($(2,0)$) {};
				
				\node () [] at ($(0')+(90:.2)$) {$a_0$};
				\node () [] at ($(1')+(90:.2)$) {$a_1$};
				\node () [] at ($(2')+(180:.3)$) {$a_2$};
				\node () [] at ($(3')+(0:.3)$) {$a_3$};
				\node () [] at ($(4')+(90:.2)$) {$a_4$};
				
				\node (1'') [black vertex] at ($(0,0)+(-0.5,-0.8)$) {};
				\node (0'') [black vertex] at ($(1'')+(-1,0)$) {};	
				\node (2'') [black vertex] at ($(1'')+(0.5,-0.8)$) {};
				\node (3'') [black vertex] at ($(2')$) {};
				\node (4'') [black vertex] at ($(2')+(0.5,-0.8)$) {};
				\node (5'') [black vertex] at ($(4'')+(1,0)$) {};
				
				\node () [] at ($(0'')+(180:.3)$) {$b_0$};
				\node () [] at ($(1'')+(45:.3)$) {$b_1$};
				\node () [] at ($(2'')+(0:.3)$) {$b_2$};
				
				\node () [] at ($(4'')+(-90:.3)$) {$b_4$};
				\node () [] at ($(5'')+(0:.3)$) {$b_5$};

				\draw[line width=1.3pt,color=gray,<-] (0') -- (1');
				\draw[line width=1.5pt,dashed,color=red,->] (4') -- (2');
				\draw[line width=1.5pt,dotted,color=green,->] (2') -- (1');
				\draw[line width=1.5pt,dotted,color=green,->] (3') -- (4');
				\draw[line width=1.3pt,M edge] (2') -- (3');

				\draw[line width=1.3pt,color=gray,<-] (0'') -- (1'');
				\draw[line width=1.5pt,dashed,color=red,->] (4'') -- (5'');
				\draw[line width=1.5pt,dotted,color=green,->] (2'') -- (1'');
				\draw[line width=1.5pt,dotted,color=green,->] (4'') -- (3'');
				\draw[line width=1.5pt,dashed,color=red,->] (3'') -- (2'');
			\end{tikzpicture}
		}
		\caption{An exceptional pair.}
		\label{fig:typeE}
	\end{figure}
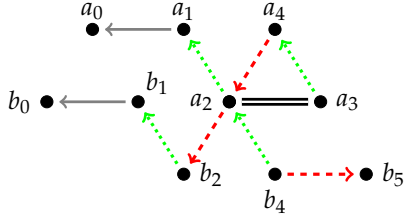
	An \emph{open chain} is a sequence $T_0, T_1,\ldots, T_{s-1}$ of \(s\geq 2\) elements of \(\D\)
	with the following properties.
	(i) 	\(T_0\) is a free element of type~A;
	(ii)	$T_j$ is an element of type~A and \({\tr(T_j) = \cv_i(T_{j-1})}\),
	for every $j \in\{0,\ldots,s-2\}$ and some \(i\in\{1,2\}\); and
	(iii)	$T_{s-1}$ is an element of type~C for which \((T_{s-2},T_{s-1})\) is an exceptional pair. 
	We remark that open chains are not A-chains.
	The next definition describes the invariant studied in this section.
	
	\begin{defi}\label{def:semi-complete-commutative}
		We say that a decomposition \(\D\) of a \(\{g,r\}\)-graph \(G\) 
		into trails of length \(5\) 		
		is \emph{admissible} if the following hold.
		\begin{enumerate}[(a)]
			\item\label{def:semi-complete-commutative-types}
			Every element in \(\D\) is either a path or an element of type~A;
			\item\label{def:semi-complete-commutative-hanging-edge}
			For every element \(T\in\D\) of type~A, we have \(\hang\big(\cv_1(T)\big)\geq 2\),
			and there is at most one element \(T\in \D\) of type~A for which
			\(\hang\big(\cv_2(T)\big) = 0\),
			and, in this case, there is an open chain \({S = T_0,\ldots, T_{s-2},T_{s-1}}\) in \(\D\), 
			for which \(T_{s-2} = T\);
			\item\label{def:semi-complete-free}
			The elements of type~A in \(\D\) are partitioned into A-chains and at most one open chain.
		\end{enumerate}
	\end{defi}
	
	It is not hard to check that the decomposition given 
	by Lemma~\ref{lemma:P5-decomposition}
	is an admissible decomposition.
	Therefore, every $\{g,r\}$-graph for which $2g+2r\neq 0$
	admits an admissible decomposition.
	By performing a few more exchanges of edges between the elements
	of a same A-chain of an admissible decomposition, we can show that
	an admissible decomposition that minimizes its number of elements of type~A
	is in fact a \(P_5\)-decomposition.

	\begin{theorem}\label{theorem:2g2r!=0}
		Every $\{g,r\}$-graph for which \(2g+2r\neq 0\) admits a $P_5$-decomposition.
	\end{theorem}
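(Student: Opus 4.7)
The plan is to leverage the machinery of \emph{admissible} decompositions (Definition~\ref{def:semi-complete-commutative}), which by Lemma~\ref{lemma:P5-decomposition} and the remark that follows it are known to exist for every \(\{g,r\}\)-graph with \(2g+2r\neq 0\). I will pick an admissible decomposition \(\D\) that minimises the number of elements of type~A (equivalently, \(\tau(\D)\)), and argue that this minimum has to be zero; once that is established, Definition~\ref{def:semi-complete-commutative}\eqref{def:semi-complete-commutative-types} forces \(\D\) to be a \(P_5\)-decomposition. Suppose for contradiction that \(\D\) still carries an element of type~A. By Definition~\ref{def:semi-complete-commutative}\eqref{def:semi-complete-free} the type~A elements partition into A-chains and at most one open chain, and my task is to produce an admissible decomposition \(\D'\) with strictly fewer type~A elements.

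The core move is to break an A-chain \(T_0,\ldots,T_{s-1}\) using only edges already on the chain. Writing \(T_j=a_0^ja_1^ja_2^ja_3^ja_4^ja_2^j\), the crucial observation is that \(T_1\) itself supplies a hanging edge at \(\cv_1(T_0)=a_3^0=\tr(T_1)=a_4^1\): namely the closing red edge \(a_4^1a_2^1\). This is exactly the kind of hanging edge required by Lemma~\ref{lemma:T'1-is-path}, so applying that lemma converts \(T_0\) into a type~C path \(T_0'\). The edge-swap simultaneously transfers the \(M_{g,r}\)-edge \(a_2^0a_3^0\) into \(T_1\); I would then verify by a direct structural check (using Definition~\ref{def:types} and the arithmetic of \(g,r\)) that the resulting trail \(T_1'\) is an element of type~B, hence a path. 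The elements \(T_2,\ldots,T_{s-1}\) stay unchanged, but now \(T_2\) is free and \((T_{s-1},T_0')\) fits the definition of an exceptional pair via the identifications forced by Lemma~\ref{lemma:type12}, so the residual type~A elements form the open chain \(T_2,\ldots,T_{s-1},T_0'\). Two type~A elements have become paths, so the new decomposition has smaller \(\tau\), contradicting minimality.

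The open-chain case is handled analogously. The free element \(T_0\) carries a hanging edge at \(\cv_1(T_0)\) outside itself by Definition~\ref{def:semi-complete-commutative}\eqref{def:semi-complete-commutative-hanging-edge}, and by Remark~\ref{remark:exceptional-hanging-edge} this edge is not housed in the paired type~C element \(T_{s-1}\). Lemma~\ref{lemma:T'1-is-path} then converts \(T_0\) into a type~C path, and—depending on which trail the hanging edge comes from—the resulting structure either shortens the existing open chain or replaces it by a strictly shorter A-chain; in either subcase the new decomposition remains admissible with fewer type~A elements, again contradicting minimality.

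The main obstacle I expect is the bookkeeping required to confirm that the decomposition obtained after each swap really is admissible. Specifically, I must verify (i) that \(T_0'\) and \(T_1'\) are genuine paths by ruling out accidental coincidences of vertices like \(a_2^0=a_3^1\) or \(b_1=a_3\), which is precisely where the hypothesis \(2g+2r\neq 0\) and the SCG conditions on \(g,r\) enter decisively, together with Lemma~\ref{lemma:no-cycles}; (ii) that at the seam between \(T_{s-1}\) and \(T_0'\) the identifications demanded by the definition of exceptional pair actually hold, so that Definition~\ref{def:semi-complete-commutative}\eqref{def:semi-complete-free} is restored; and (iii) that the hanging-edge inequalities of Definition~\ref{def:semi-complete-commutative}\eqref{def:semi-complete-commutative-hanging-edge} survive at every remaining connection vertex—for instance, that \(T_2\) picks up the second hanging edge it needs at \(\cv_1(T_2)\) once \(T_0\) and \(T_1\) have ceased to be type~A. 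Once this case analysis is executed carefully, the minimality of \(\tau(\D)\) is contradicted and the theorem follows.
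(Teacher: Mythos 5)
Your central move does not work, and the failure is forced by the paper's own Lemma~\ref{lemma:type12}. You take consecutive elements \(T_0,T_1\) of an A-chain with \(\tr(T_1)=\cv_1(T_0)\), i.e.\ \(a^0_3=a^1_4\), apply Lemma~\ref{lemma:T'1-is-path} using the hanging red edge \(a^1_4a^1_2\) of \(T_1\), and hand the freed matching edge \(a^0_2a^0_3\) to \(T_1\), claiming the resulting trail \(T_1'\) is of type~B. But Lemma~\ref{lemma:type12} gives \(\aux(T_1)=\cv_2(T_0)\), that is \(a^1_1=a^0_2\). Hence \(T_1'=a^1_0a^1_1a^1_2a^1_3a^1_4a^0_2\) ends at \(a^0_2=a^1_1\), an internal vertex of the trail: \(T_1'\) is not a path, and it is not of type~A either, since its repeated vertex occupies position \(1\) (or \(0\) and \(4\) if you read it backwards), never the position-\(2\)/position-\(5\) pattern that Definition~\ref{def:types} requires. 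So Definition~\ref{def:semi-complete-commutative}\eqref{def:semi-complete-commutative-types} fails for \(\D'\), the decomposition is no longer admissible, and the minimality argument you want to invoke collapses—you cannot even certify the hanging-edge condition \eqref{def:semi-complete-commutative-hanging-edge} or the chain partition \eqref{def:semi-complete-free} for an element outside the allowed classes. This vertex entanglement of consecutive chain elements (they share both the \(\tr/\cv_1\) vertex and the \(\aux/\cv_2\) vertex) is exactly why no single local exchange breaks an A-chain, and why the paper instead rewires entire chains of type~1 or~2 at once by reversing components of \(F_g\) or \(F_r\) (Claim~\ref{claim:mixed-A-chain}), proves every chain is mixed and has at least four elements (Claim~\ref{claim:no-short-A-chain}), converts a mixed chain into an open chain through an exceptional pair (Claim~\ref{claim:open-chain}), and only then shortens open chains by a separate multi-case analysis.

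There are secondary gaps as well. You silently assume the \(T_0\)--\(T_1\) link is via \(\cv_1\); if instead \(\tr(T_1)=\cv_2(T_0)\), the closing red edge of \(T_1\) is a hanging edge at \(\cv_2(T_0)\), not at \(\cv_1(T_0)\), and Lemma~\ref{lemma:T'1-is-path} does not apply at all—this is the type-2 situation the paper can only handle globally. Likewise, \((T_{s-1},T_0')\) is an exceptional pair only when \(\tr(T_0)=\cv_2(T_{s-1})\); if that last link is via \(\cv_1\), the vertex shared with \(T_0'\) is the primary rather than the secondary connection vertex of \(T_{s-1}\), so the open-chain structure needed for Definition~\ref{def:semi-complete-commutative}\eqref{def:semi-complete-free} is not restored. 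Finally, the open-chain case is not "analogous": the paper needs Remark~\ref{remark:exceptional-hanging-edge}, Claims~\ref{claim:open-chains-1} and~\ref{claim:open-chains-2}, and two further case distinctions to shorten an open chain while keeping admissibility, none of which is replaced by the sketch you give.
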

	
	\begin{proof}
		
		Let $g$ and $r$ be as in the statement, let \(G\) be a \(\{g,r\}\)-graph, and put \({M=M_{g,r}}\).
		By Lemma~\ref{lemma:P5-decomposition}, 
		\(G\) admits an admissible decomposition.
		Let \(\D\) be an admissible decomposition of \(G\) that minimizes \(\tau(\D)\).
		In what follows, we prove that \(\tau(\D) = 0\).
		Suppose, for a contradiction, that \(\tau(\D)>0\).
		We divide A-chains into three types, according to the connections between its elements.
		Given \(i\in\{1,2\}\), 
		we say that an A-chain $S=T_0, T_1,\ldots, T_{s-1}$ is of type~\(i\) if $\tr(T_j)=\cv_i(T_{j-1})$ 
		for every $j \in \{0,\ldots,s-1\}$;
		and we say that \(S\) is a \emph{mixed} A-chain if \(S\) is not of type~\(1\) or \(2\).
		
		Similarly to the proof of Lemma~\ref{lemma:P5-decomposition}, 
		in each step, we exchange edges between some elements of \(\D\) 
		and obtain an admissible decomposition \(\D'\) into trails of length~5 
		such that \(\tau(\D') <\tau(\D)\), which is a contradiction to the minimality of \(\D\).
		To check that \(\D'\) is an admissible decomposition, we observe the three following items:
		(i) The only connection vertex that has fewer hanging edges in \(\D'\) than in \(\D\)  
		is the secondary connection vertex of an element \(T_1\) of type~A, 
		and in this case there is an element \(T_2\) of type~C
		such that \((T_1,T_2)\) is an exceptional pair,
		and hence Definition~\ref{def:semi-complete-commutative}\eqref{def:semi-complete-commutative-hanging-edge} holds for \(\D'\);
		(ii) every element of \(\D'\) that is not an element of \(\D\), i.e., 
		the elements involved in the exchange of edges, is a path or an element of type~A,
		and hence \ref{def:semi-complete-commutative}\eqref{def:semi-complete-commutative-types} 
		holds for \(\D'\);
		(iii) either an open chain is shortened by at least one element, an A-chain is converted into an open chain,
		or all the elements of an A-chain are replaced by paths of length 5,
		and hence \ref{def:semi-complete-commutative}\eqref{def:semi-complete-free} 
		holds for \(\D'\).

		\begin{claim}\label{claim:mixed-A-chain}
			Every A-chain in $\D$ is mixed.		
		\end{claim}
		
		\begin{proof}
			Suppose, for a contradiction, that there is an A-chain \(S = T_0,T_1,\ldots, T_{s-1}\) of type~1 or~2.
			Let $T_j=a_{0,j}a_{1,j}a_{2,j}a_{3,j}a_{4,j}a_{5,j}$,
			where \(a_{5,j} = a_{2,j}\), \(a_{2,j}a_{3,j}\in M\), \({a_{2,j}a_{1,j}, a_{3,j}a_{4,j}\in F_g}\), 
			\(a_{4,j}a_{2,j}\in F_r\) and \(a_{1,j}a_{0,j}\in M\cup F_g \cup F_r\).
			For \(i\in\{1,2,3,4,5\}\), the edge \(a_{i-1,j}a_{i,j}\) is called the \emph{\(i\)th edge of \(T_j\)}.
			In what follows, we divide the proof according to the type of S.

			\smallskip	\noindent
			\textbf{\(\mathbf{S}\) is of type~1.}
			In this case, we have \({a_{3,j} = \cv_1(T_{j}) = \tr(T_{j+1}) = a_{4,j+1}}\) for each \({j\in \{0,\ldots,s-1}\} \),
			and hence, by Lemma~\ref{lemma:type12}, 
			we have $a_{2,j} = \cv_2(T_{j}) = \aux(T_{j+1})=a_{1,j+1}$.
			Now, for each \(j=0,\ldots,s-1\), 
			let \(T'_j = a_{2,j+1}a_{3,j}a_{4,j}a_{1,j}a_{2,j}a_{0,j+1}\) (see Figure~\ref{fig:case-1-2-cycles}).
			Note that 
			\(T'_j = T_j 
			- a_{1,j}a_{0,j} + a_{1,j+1}a_{0,j+1} 
			- a_{2,j}a_{3,j} + a_{2,j-1}a_{3,j-1} 
			- a_{4,j}a_{2,j} + a_{4,j+1}a_{2,j+1}\).
			More specifically, 
			\(a_{2,j+1}a_{3,j}		= a_{4,j+1}a_{5,j+1}\) 		is the \(5\)th edge of \(T_{j+1}\);
			\(a_{3,j}a_{4,j}\)									is the \(4\)th edge of \(T_{j}\);
			\(a_{4,j}a_{1,j}		= a_{2,j-1}a_{3,j-1}\)		is the \(3\)rd edge of \(T_{j-1}\);
			\(a_{1,j}a_{2,j}\)									is the \(2\)nd edge of \(T_{j}\);
			\(a_{2,j}a_{0,j+1}	= a_{1,j+1}a_{0,j+1}\)			is the \(1\)st edge of \(T_{j+1}\).
			Clearly, \(T'_j\) is a trail of length \(5\).
			Moreover, since, for each \(i\in\{1,2,3,4,5\}\), 
			the element \(T'_j\) contains the \(i\)th edge of an element of \(S\), 
			and, if \(j\neq j'\), the elements \(T'_j\) and \(T'_{j'}\) contain the \(i\)th edge of different elements of \(S\),
			the set \(\D'=\big(\D\setminus \{T_j\colon j=0,\ldots,s-1\}\big)\cup\{T'_j\colon j=0,\ldots,s-1\}\)
			is a decomposition of \(G\) into trails of length \(5\).
			We may regard \(\D'\) as the decomposition obtained by reversing the direction of two components of \(F_g\), namely, the green edges in \(S\), 
			and applying the same strategy used in Proposition~\ref{proposition:initial-decomposition}.
			
			\begin{figure}[h]
				\centering
				\begin{subfigure}{.45\textwidth}
					\centering
					\scalebox{.7}{\begin{tikzpicture}[scale=1.5]
							
							\draw[fatpath,backcolor2] (-0.5,-0.5) -- (-0.31,0.95) -- (0,0) -- (-0.5,-0.5) -- (-1.12,1.4) -- (-1.93,1.85) -- (-1.12,1.4);
						
							\node (1) [black vertex] at (0,0) {};
							\node (1') [black vertex] at (-0.5,-0.5) {};
							\node (1'') [black vertex] at (-1,-1) {};
							
							\node (2) [black vertex] at (1,0) {};
							\node (2') [black vertex] at (1.5,-0.5) {};
							\node (2'') [black vertex] at (2,-1) {};
							
							\node (3) [black vertex] at (1.31,0.95) {};
							\node (3') [black vertex] at (2.12,1.4) {};
							\node (3'') [black vertex] at (2.93,1.85) {};
							
							\node (4) [black vertex] at (0.5,1.54) {};
							\node (4') [black vertex] at (0.5,2.58) {};
							\node (4'') [black vertex] at (0.5,3.62) {};
							
							\node (5) [black vertex] at (-0.31,0.95) {};
							\node (5') [black vertex] at (-1.12,1.4) {};
							\node (5'') [black vertex] at (-1.93,1.85) {};

							\draw[line width=1.5pt,dotted,color=green,->] (2) -- (1);
							\draw[line width=1.5pt,dotted,color=green,->] (3) -- (2);
							\draw[line width=1.5pt,dotted,color=green,->] (4) -- (3);
							\draw[line width=1.5pt,dotted,color=green,->] (5) -- (4);
							\draw[line width=1.5pt,dotted,color=green,->] (1) -- (5);
							\draw[line width=1.5pt,dotted,color=green,->] (2') -- (1');
							\draw[line width=1.5pt,dotted,color=green,->] (3') -- (2');
							\draw[line width=1.5pt,dotted,color=green,->] (4') -- (3');
							\draw[line width=1.5pt,dotted,color=green,->] (5') -- (4');
							\draw[line width=1.5pt,dotted,color=green,->] (1') -- (5');
							\draw[line width=1.5pt,dashed,color=red,->] (5) -- (1');
							\draw[line width=1.5pt,dashed,color=red,->] (1) -- (2');
							\draw[line width=1.5pt,dashed,color=red,->] (2) -- (3');
							\draw[line width=1.5pt,dashed,color=red,->] (3) -- (4');
							\draw[line width=1.5pt,dashed,color=red,->] (4) -- (5');
							
							\draw[line width=1.5pt,dashed,color=red,->] (1') -- (1'');
							\draw[line width=1.5pt,dashed,color=red,->] (2') -- (2'');
							\draw[line width=1.5pt,dashed,color=red,->] (3') -- (3'');
							\draw[line width=1.5pt,dashed,color=red,->] (4') -- (4'');
							\draw[line width=1.5pt,dashed,color=red,->] (5') -- (5'');
							
							\draw[line width=1.3pt,M edge] (1) -- (1');
							\draw[line width=1.3pt,M edge] (2) -- (2');
							\draw[line width=1.3pt,M edge] (3) -- (3');
							\draw[line width=1.3pt,M edge] (4) -- (4');
							\draw[line width=1.3pt,M edge] (5) -- (5');
							
					\end{tikzpicture}}
					\label{fig:case-1-2-cycles-1}
				\end{subfigure}
				\begin{subfigure}{.45\textwidth}
					\centering
					\scalebox{.7}{\begin{tikzpicture}[scale=1.5]

							\draw[fatpath,backcolor2]  (1.31,0.95) --(0.5,2.58) -- (1.31,0.95) -- (1,0) -- (1.5,-0.5) -- (2.12,1.4) -- (2.93,1.85) -- (2.12,1.4);

							\node () [] at ($(0.5,2.58)+(0:.5)$) {$a_{2,j+1}$};
							\node () [] at ($(1.31,0.95)+(180:.5)$) {$a_{3,j}$};
							\node () [] at ($((1,0)+(135:.4)$) {$a_{4,j}$};
							\node () [] at ($(1.5,-0.5)+(225:.3)$) {$a_{1,j}$};
							\node () [] at ($(2.12,1.4)+(335:.4)$) {$a_{2,j}$};
							\node () [] at ($(2.93,1.85)+(90:.3)$) {$a_{0,j+1}$};
							
							\node (1) [black vertex] at (0,0) {};
							\node (1') [black vertex] at (-0.5,-0.5) {};
							\node (1'') [black vertex] at (-1,-1) {};
							
							\node (2) [black vertex] at (1,0) {};
							\node (2') [black vertex] at (1.5,-0.5) {};
							\node (2'') [black vertex] at (2,-1) {};
							
							\node (3) [black vertex] at (1.31,0.95) {};
							\node (3') [black vertex] at (2.12,1.4) {};
							\node (3'') [black vertex] at (2.93,1.85) {};
							
							\node (4) [black vertex] at (0.5,1.54) {};
							\node (4') [black vertex] at (0.5,2.58) {};
							\node (4'') [black vertex] at (0.5,3.62) {};
							
							\node (5) [black vertex] at (-0.31,0.95) {};
							\node (5') [black vertex] at (-1.12,1.4) {};
							\node (5'') [black vertex] at (-1.93,1.85) {};

							\draw[line width=1.5pt,dotted,color=green,->] (2) -- (1);
							\draw[line width=1.5pt,dotted,color=green,->] (3) -- (2);
							\draw[line width=1.5pt,dotted,color=green,->] (4) -- (3);
							\draw[line width=1.5pt,dotted,color=green,->] (5) -- (4);
							\draw[line width=1.5pt,dotted,color=green,->] (1) -- (5);
							\draw[line width=1.5pt,dotted,color=green,->] (2') -- (1');
							\draw[line width=1.5pt,dotted,color=green,->] (3') -- (2');
							\draw[line width=1.5pt,dotted,color=green,->] (4') -- (3');
							\draw[line width=1.5pt,dotted,color=green,->] (5') -- (4');
							\draw[line width=1.5pt,dotted,color=green,->] (1') -- (5');
							\draw[line width=1.5pt,dashed,color=red,->] (5) -- (1');
							\draw[line width=1.5pt,dashed,color=red,->] (1) -- (2');
							\draw[line width=1.5pt,dashed,color=red,->] (2) -- (3');
							\draw[line width=1.5pt,dashed,color=red,->] (3) -- (4');
							\draw[line width=1.5pt,dashed,color=red,->] (4) -- (5');
							
							\draw[line width=1.5pt,dashed,color=red,->] (1') -- (1'');
							\draw[line width=1.5pt,dashed,color=red,->] (2') -- (2'');
							\draw[line width=1.5pt,dashed,color=red,->] (3') -- (3'');
							\draw[line width=1.5pt,dashed,color=red,->] (4') -- (4'');
							\draw[line width=1.5pt,dashed,color=red,->] (5') -- (5'');
							
							\draw[line width=1.3pt,M edge] (1) -- (1');
							\draw[line width=1.3pt,M edge] (2) -- (2');
							\draw[line width=1.3pt,M edge] (3) -- (3');
							\draw[line width=1.3pt,M edge] (4) -- (4');
							\draw[line width=1.3pt,M edge] (5) -- (5');
							
						\end{tikzpicture}
					}
					\label{fig:case-1-2-cycles-2}
				\end{subfigure}
				\caption{Exchange of edges between the elements of an A-chain of type~1 with
					five elements in the proof of Claim~\ref{claim:mixed-A-chain}.}
				\label{fig:case-1-2-cycles}
			\end{figure}
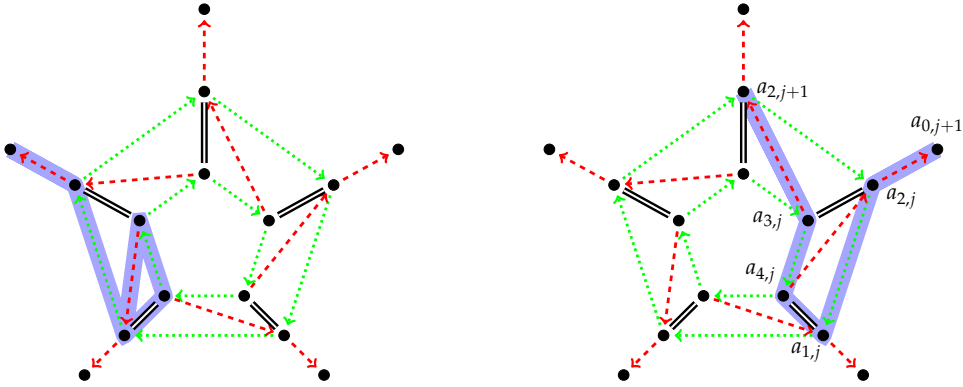
			
			In order to prove that $T'_j$ is a path,
			we show that \(a_{2,j+1},a_{0,j+1}\notin\{a_{3,j},a_{4,j},a_{1,j},a_{2,j}\}\).
			Note that, since for each \(j\in\{0,\ldots,s-1\}\), 
			\(T_j\) is a path, we have \(a_{i,j} \neq a_{i',j}\) for every \(i\neq i'\).
			Since \(G\) is a simple graph, we have \(a_{2,j+1}\notin\{a_{3,j},a_{4,j},a_{2,j}\}\) 
			and \(a_{0,j+1}\notin\{a_{3,j},a_{4,j},a_{1,j},a_{2,j}\}\);
			and if \(a_{2,j+1} = a_{1,j}\), then \(a_{4,j+1}a_{2,j+1}\) and \(a_{4,j-1,}a_{2,j-1}\) 
			are two distinct red in edges of \(a_{1,j}\), a contradiction.

			We claim that \(\D'\) is an admissible decomposition.
			Indeed, the only vertices of the elements of \(S\) that can be connection vertices of elements in \(\D'\)
			are the vertices \(a_{0,j}\), for \(j=0,\ldots,s-1\).
			But a hanging edge at a vertex \(a_{0,j}\) is in \(T_{j'}\in\D\) if and only if \(a_{0,j} = a_{3,j'}\) for some \(j'\neq j\),
			and, in this case \(a_{3,j'}\) is not a connection vertex in \(\D'\) because 
			all edges incident to it are in elements of \(\{T'_j\colon j=0,\ldots,s-1\}\).
			Therefore, Definition~\ref{def:semi-complete-commutative}\eqref{def:semi-complete-commutative-hanging-edge} holds for \(\D'\).
			Moreover, since \(T'_j\) is a path, for \(j=0,\ldots,s-1\), Definition~\ref{def:semi-complete-commutative}\eqref{def:complete-commutative-types} holds for \(\D'\).
			Finally, \(\D\) and \(\D'\) have the same number of open chains, 
			and hence Definition~\ref{def:semi-complete-commutative}\eqref{def:semi-complete-free} holds for \(\D'\).
			Therefore, \(\D'\) is an admissible decomposition of \(G\) such that \({\tau(\D') = \tau(\D) - s}\), 
			a contradiction to the minimality of \(\D\).
			
			\smallskip	\noindent
			\textbf{\(\mathbf{S}\) is of type~2.}
			In this case, for each \(j=0,\ldots,s-1\), we have \(a_{2,j} = \cv_2(T_{j}) = \tr(T_{j+1}) = a_{4,j+1}\).
			Now, for each \(j=0,\ldots,s-1\), 
			let \(T'_j = a_{0,j}a_{1,j}a_{2,j}a_{3,j}a_{4,j}a_{4,j-1}\) (see Figure~\ref{fig:case-1-1-cycles}).
			Clearly, \(T'_j\) is a trail of length \(5\).
			Note that 
			\(T'_j = T_j - a_{4,j}a_{2,j} + a_{4,j-1}a_{2,j-1}\),
			i.e., \(T'_j\) is the element obtained from \(T_j\) by exchanging its \(5\)th edge by the \(5\)th edge of \(T_{j-1}\).
			Thus, the set \(\D'=\big(\D\setminus \{T_j\colon j=0,\ldots,s-1\}\big)\cup\{T'_j\colon j=0,\ldots,s-1\}\)
			is a decomposition of \(G\) into trails of length \(5\).
			We may regard \(\D'\) as the decomposition obtained by reversing the direction of one component of \(F_r\)
			and applying the same strategy used in Proposition~\ref{proposition:initial-decomposition}.
			In order to prove that $T'_j$ is a path,
			we show that \(a_{4,j-1}\notin\{a_{0,j},a_{1,j},a_{2,j},a_{3,j},a_{4,j}\}\).
			Note that, since for each \(j\in\{0,\ldots,s-1\}\), 
			\(T_j\) is a path, we have \(a_{i,j} \neq a_{i',j}\) for every \(i\neq i'\).
			Since \(G\) is a simple graph, we have \(a_{4,j-1}\notin\{a_{2,j},a_{3,j},a_{4,j}\}\);
			also, by Lemma~\ref{lemma:no-cycles}, we have \(a_{4,j-1}\neq a_{0,j}\);
			and if \(a_{4,j-1} = a_{1,j}\), then \(a_{2,j}a_{1,j}\) and \(a_{3,j-1}a_{4,j-1}\) are two distinct green in edges of \(a_{4,j-1}\),
			a contradiction.

			\begin{figure}[H]
				\centering
				\begin{subfigure}{.45\textwidth}
					\centering
					\scalebox{.7}{\begin{tikzpicture}[scale=1.5]

							\draw[fatpath,backcolor2] (0.5,2.54) -- (1.5,2.54)-- (0.5,2.54) --(0.5,1.54) -- (-0.31,1.8) -- (-0.31,0.95) -- (0.5,1.54);

							\node (1) [black vertex] at (0,0) {};
							\node (2) [black vertex] at (1,0) {};
							\node (3) [black vertex] at (1.31,0.95) {};
							\node (4) [black vertex] at (0.5,1.54) {};
							\node (5) [black vertex] at (-0.31,0.95) {};
							
							\node (12) [black vertex] at (0.5,-0.69) {};
							\node (23) [black vertex] at (1.81,0.26) {};
							\node (34) [black vertex] at (1.31,1.8) {};
							\node (45) [black vertex] at (-0.31,1.8) {};
							\node (15) [black vertex] at (-0.81,0.26) {};
							
							\node (1') [black vertex] at (-0.59,-0.81) {};
							\node (3') [black vertex] at (2.26,1.26) {};
							\node (2') [black vertex] at (1.59,-0.81) {};
							\node (4') [black vertex] at (0.5,2.54) {};
							\node (5') [black vertex] at (-1.26,1.26) {};
							
							\node (1'') [black vertex] at (-1.4,-0.22) {};
							\node (2'') [black vertex] at (0.78,-1.4) {};
							\node (3'') [black vertex] at (2.57,0.31) {};
							\node (4'') [black vertex] at (1.5,2.54) {};
							\node (5'') [black vertex] at (-0.95,2.21) {};

							\draw[line width=1.5pt,dashed,color=red,->] (2) -- (1);
							\draw[line width=1.5pt,dashed,color=red,->] (3) -- (2);
							\draw[line width=1.5pt,dashed,color=red,->] (4) -- (3);
							\draw[line width=1.5pt,dashed,color=red,->] (5) -- (4);
							\draw[line width=1.5pt,dashed,color=red,->] (1) -- (5);
							
							\draw[line width=1.5pt,dotted,color=green,<-] (2) -- (12);
							\draw[line width=1.5pt,dotted,color=green,<-] (1) -- (15);
							\draw[line width=1.5pt,dotted,color=green,<-] (5) -- (45);
							\draw[line width=1.5pt,dotted,color=green,<-] (4) -- (34);
							\draw[line width=1.5pt,dotted,color=green,<-] (3) -- (23);
							
							\draw[line width=1.5pt,dotted,color=green,->] (2) -- (2');
							\draw[line width=1.5pt,dotted,color=green,->] (1) -- (1');
							\draw[line width=1.5pt,dotted,color=green,->] (5) -- (5');
							\draw[line width=1.5pt,dotted,color=green,->] (4) -- (4');
							\draw[line width=1.5pt,dotted,color=green,->] (3) -- (3');
							
							\draw[line width=1.3pt,M edge] (1) -- (12);
							\draw[line width=1.3pt,M edge] (2) -- (23);
							\draw[line width=1.3pt,M edge] (3) -- (34);
							\draw[line width=1.3pt,M edge] (4) -- (45);
							\draw[line width=1.3pt,M edge] (5) -- (15);
							
							\draw[line width=1.3pt,color=gray] (2'') -- (2');
							\draw[line width=1.3pt,color=gray] (1'') -- (1');
							\draw[line width=1.3pt,color=gray] (5'') -- (5');
							\draw[line width=1.3pt,color=gray] (4'') -- (4');
							\draw[line width=1.3pt,color=gray] (3'') -- (3');

						\end{tikzpicture}
					}
					\label{fig:case-1-1-cycles-1}
				\end{subfigure}
				\begin{subfigure}{.45\textwidth}
					\centering
					\scalebox{.7}{\begin{tikzpicture}[scale=1.5]

							\draw[fatpath,backcolor2] (0.5,2.54) -- (1.5,2.54) -- (0.5,2.54) -- (0.5,1.54) -- (-0.31,1.8) -- (-0.31,0.95) -- (0,0) -- (-0.31,0.95);
							
							\node () [] at ($(1.5,2.54)+(90:.25)$) {$a_{0,j}$};
							\node () [] at ($((0.5,2.54)+(90:.25)$) {$a_{1,j}$};
							\node () [] at ($(0.5,1.54)+(270:.35)$) {$a_{2,j}$};
							\node () [] at ($((-0.31,0.95)+(350:.45)$) {$a_{4,j}$};
							\node () [] at ($(0,0)+(30:.4)$) {$a_{4,j-1}$};
							\node () [] at ($(-0.31,1.8)+(90:.3)$) {$a_{3,j}$};

							\node (1) [black vertex] at (0,0) {};
							\node (2) [black vertex] at (1,0) {};
							\node (3) [black vertex] at (1.31,0.95) {};
							\node (4) [black vertex] at (0.5,1.54) {};
							\node (5) [black vertex] at (-0.31,0.95) {};
							
							\node (12) [black vertex] at (0.5,-0.69) {};
							\node (23) [black vertex] at (1.81,0.26) {};
							\node (34) [black vertex] at (1.31,1.8) {};
							\node (45) [black vertex] at (-0.31,1.8) {};
							\node (15) [black vertex] at (-0.81,0.26) {};
							
							\node (1') [black vertex] at (-0.59,-0.81) {};
							\node (3') [black vertex] at (2.26,1.26) {};
							\node (2') [black vertex] at (1.59,-0.81) {};
							\node (4') [black vertex] at (0.5,2.54) {};
							\node (5') [black vertex] at (-1.26,1.26) {};
							
							\node (1'') [black vertex] at (-1.4,-0.22) {};
							\node (2'') [black vertex] at (0.78,-1.4) {};
							\node (3'') [black vertex] at (2.57,0.31) {};
							\node (4'') [black vertex] at (1.5,2.54) {};
							\node (5'') [black vertex] at (-0.95,2.21) {};

							\draw[line width=1.5pt,dashed,color=red,->] (2) -- (1);
							\draw[line width=1.5pt,dashed,color=red,->] (3) -- (2);
							\draw[line width=1.5pt,dashed,color=red,->] (4) -- (3);
							\draw[line width=1.5pt,dashed,color=red,->] (5) -- (4);
							\draw[line width=1.5pt,dashed,color=red,->] (1) -- (5);
							
							\draw[line width=1.5pt,dotted,color=green,<-] (2) -- (12);
							\draw[line width=1.5pt,dotted,color=green,<-] (1) -- (15);
							\draw[line width=1.5pt,dotted,color=green,<-] (5) -- (45);
							\draw[line width=1.5pt,dotted,color=green,<-] (4) -- (34);
							\draw[line width=1.5pt,dotted,color=green,<-] (3) -- (23);
							
							\draw[line width=1.5pt,dotted,color=green,->] (2) -- (2');
							\draw[line width=1.5pt,dotted,color=green,->] (1) -- (1');
							\draw[line width=1.5pt,dotted,color=green,->] (5) -- (5');
							\draw[line width=1.5pt,dotted,color=green,->] (4) -- (4');
							\draw[line width=1.5pt,dotted,color=green,->] (3) -- (3');
							
							\draw[line width=1.3pt,M edge] (1) -- (12);
							\draw[line width=1.3pt,M edge] (2) -- (23);
							\draw[line width=1.3pt,M edge] (3) -- (34);
							\draw[line width=1.3pt,M edge] (4) -- (45);
							\draw[line width=1.3pt,M edge] (5) -- (15);
							
							\draw[line width=1.3pt,color=gray] (2'') -- (2');
							\draw[line width=1.3pt,color=gray] (1'') -- (1');
							\draw[line width=1.3pt,color=gray] (5'') -- (5');
							\draw[line width=1.3pt,color=gray] (4'') -- (4');
							\draw[line width=1.3pt,color=gray] (3'') -- (3');

						\end{tikzpicture}
					}
					\label{fig:case-1-1-cycles-2}
				\end{subfigure}
				\caption{Exchange of edges between the elements of an A-chain of type~2 with
					five elements in the proof of Claim~\ref{claim:mixed-A-chain}.}
				\label{fig:case-1-1-cycles}
			\end{figure}		
			
			We claim that \(\D'\) is an admissible decomposition.
			Indeed, the only vertices that have hanging edges in \(\D\) and may not have hanging edges in \(\D'\)
			are the vertices \(a_{3,j}\) and \(a_{4,j} = a_{2,j-1}\), for \(j=0,\ldots,s-1\),
			but these vertices are connection vertices of the elements in \(S\),
			and hence can't be connection vertices of elements in \(\D'\).
			Therefore, Definition~\ref{def:semi-complete-commutative}\eqref{def:semi-complete-commutative-hanging-edge} holds for \(\D'\).
			Moreover, since \(T'_j\) is a path, for \(j=0,\ldots,s-1\), Definition~\ref{def:semi-complete-commutative}\eqref{def:complete-commutative-types} holds for \(\D'\).
			Finally, \(\D\) and \(\D'\) have the same number of open chains, 
			and hence Definition~\ref{def:semi-complete-commutative}\eqref{def:semi-complete-free} holds for \(\D'\).
			Therefore, \(\D'\) is an admissible decomposition of \(G\) such that \(\tau(\D') = \tau(\D) - s\), 
			a contradiction to the minimality of \(\D\).
		\end{proof}
		
		\begin{claim}\label{claim:no-short-A-chain}
			Every A-chain contains at least four elements
		\end{claim}
		
		\begin{proof}
			First, note that if an A-chain consists of two elements, then \(G\) contains a parallel edge, which is a contradiction. 
			Thus, let \(S\) be an A-chain in $\D$ with precisely three elements, say \(T_1\), \(T_2\), and \(T_3\).  
			By Claim~\ref{claim:mixed-A-chain}, we may assume $\tr(T_1)=\cv_2(T_3)$,
			${\tr(T_2)=\cv_1(T_1)}$ and $\tr(T_3)=\cv_i(T_2)$, for $i \in \{1,2\}$. In what follows, we divide the proof depending on whether $i=1$ or $i=2$.
			
			Let $T_1=a_0a_1a_2a_3a_4a_5$, $T_2=b_0b_1b_2b_3b_4b_5$ and \(T_3 = c_0c_1c_2c_3c_4c_5\) be the elements of $S$ where $a_4=\tr(T_1)=\cv_2(T_3)=c_2$,
			$b_4=\tr(T_2)=\cv_1(T_1)=a_3$ and ${c_4=\tr(T_3)=\cv_i(T_2)}$.

			\smallskip	\noindent
			\textbf{Case $\mathbf{i=1}$.}
			In this case, $c_4=\tr(T_3)=\cv_1(T_2)=b_3$.
			Since $b_3=c_4$, we have $b_2=c_1$ and $c_0=a_1$.
			Put \(T_1' = a_0a_1a_2a_3a_4c_1\),
			\(T_2' = b_0b_1b_2b_4b_3c_2\),
			\(T_3' = c_0c_1c_4c_3c_2a_2\) (see Figure~\ref{fig:case4-similar4})  and
			\({\D' = \big(\D\setminus\{T_1,T_2,T_3\}\big)\cup\{T_1',T_2',T_3'\}}\).
			We claim that  $T_1'$, \(T_2'\) and \(T_3'\)  are paths.
			By Lemma~\ref{lemma:T'1-is-path}, \(T_2'\) is a path.
			Since $G$ is simple,
			\(c_1\notin \{a_2,a_3,a_4\}\) and
			\(a_2\notin \{c_1,c_2,c_3,c_4\}\).
			By Lemma~\ref{lemma:no-cycles}, we have \(c_1 \neq a_0\) and $a_2\neq c_0$. 
			Therefore, $T_3'$ is a path.	
			Finally, if \(c_1 = a_1\), then \(c_2c_1\) and \(a_2a_1\) are two green out edges at \(a_1\), a contradiction.					
			Therefore, $T_1'$ is a path, 
			and hence definition~\ref{def:semi-complete-commutative}\eqref{def:semi-complete-commutative-types} holds.
			Also, \(\hang_{\D'}(v) \geq \hang_{\D}(v)\) 
			for every \(v\in V(G)\setminus\{a_3,b_3,c_3\}\). 
			Thus, definition~\ref{def:semi-complete-commutative}\eqref{def:semi-complete-commutative-hanging-edge} holds for $\D$.
			Since $\D$ is admissible and the new elements are paths, the elements of type~A 
			are still partitioned into A-chains and at most one open chain,
			and hence \ref{def:semi-complete-commutative}\eqref{def:semi-complete-free}
			holds for~\(\D'\). 
			Therefore, \(\D'\) is an admissible decomposition of \(G\)
			such that \(\tau(\D')  = \tau(\D) - 3\), a contradiction to the minimality of~\(\D\).
			
			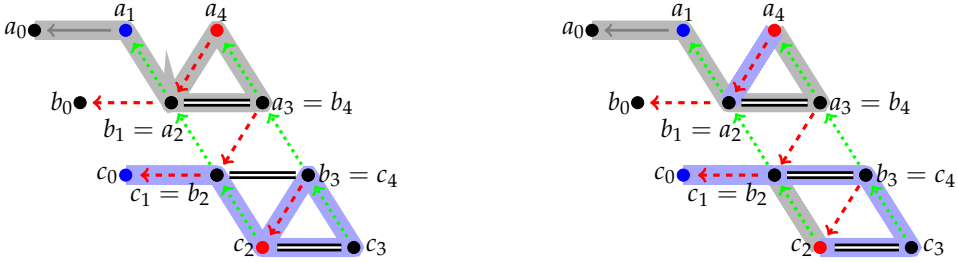
\begin{figure}[H]
				\centering
				\begin{subfigure}{.45\textwidth}
					\centering
					\scalebox{.8}{\begin{tikzpicture}[scale=1.5]
			
							\draw[fatpath,backcolor1] (-0.5,0.8) -- (-1.5,0.8) -- (-0.5,0.8) -- (0,0) -- (0,0) -- (1,0) -- (0.5,0.8) -- (0,0);
							\draw[fatpath,backcolor2] (0.5,-0.8) -- (-0.5,-0.8)-- (0.5,-0.8) -- (1,-1.6) -- (2,-1.6) -- (1.5,-0.8) -- (1,-1.6);

							\node (0) [black vertex] at (-1.5,0.8) {};
							\node (1) [blue vertex] at (-0.5,0.8) {};
							\node (2) [black vertex] at (0,0) {};
							\node (3) [black vertex] at (1,0) {};
							\node (4) [red vertex] at (0.5,0.8) {};
							\node () []		 at ($(1,0)+(60:1)$) {};

							\node (4') [black vertex] at ($(1,0)$) {};
							\node (3') [black vertex] at ($(3)+(0.5,-0.8)$) {};	
							\node (2') [black vertex] at ($(2)+(0.5,-0.8)$) {};
							\node (1') [black vertex] at ($(0,0)$) {};
							\node (0') [black vertex] at ($(0,0)+(-1,0)$) {};
							\node (5') [] at ($(3,0)$) {};

							\node (4'') [black vertex] at ($(3')$) {};
							\node (3'') [black vertex] at ($(3)+(1,-1.6)$) {};	
							\node (2'') [red vertex] at ($(2)+(1,-1.6)$) {};
							\node (1'') [black vertex] at ($(2')$) {};
							\node (0'') [blue vertex] at ($(1'')+(-1,0)$) {};
							\node (5'') [] at ($(3,0)$) {};

							\node () [] at ($(0)+(180:.2)$) {$a_0$};
							\node () [] at ($(1)+(90:.2)$) {$a_1$};
							\node () [] at ($(2)+(90:.3)$) {};
							\node () [] at ($(3)+(0:.55)$) {$a_3=b_4$};
							\node () [] at ($(4)+(90:.2)$) {$a_4$};

							\node () [] at ($(0')+(180:.2)$) {$b_0$};
							\node () [] at ($(1')+(-90:.3)-(0.3,0)$) {$b_1=a_2$};
							\node () [] at ($(2')+(90:.35)$) {};
							\node () [] at ($(3')+(0:.55)$) {$b_3=c_4$};
							\node () [] at ($(4')+(-90:.3)$) {};

							\node () [] at ($(0'')+(180:.2)$) {$c_0$};
							\node () [] at ($(1'')+(225:.3)-(0.3,0)$) {$c_1=b_2$};
							\node () [] at ($(2'')+(180:.2)$) {$c_2$};
							\node () [] at ($(3'')+(0:.25)$) {$c_3$};
							\node () [] at ($(4'')+(270:.35)$) {};

							\draw[line width=1.3pt,color=gray,<-] (0) -- (1);
							\draw[line width=1.5pt,dotted,color=green,->] (2) -- (1);
							\draw[line width=1.5pt,dotted,color=green,->] (3) -- (4);
							\draw[line width=1.5pt,dashed,color=red,->] (4) -- (2);
							\draw[line width=1.3pt,M edge] (2) -- (3);

							\draw[line width=1.5pt,dashed,color=red,<-] (0') -- (1');
							\draw[line width=1.5pt,dashed,color=red,->] (4') -- (2');
							\draw[line width=1.5pt,dotted,color=green,->] (2') -- (1');
							\draw[line width=1.5pt,dotted,color=green,->] (3') -- (4');
							\draw[line width=1.3pt,M edge] (2') -- (3');

							\draw[line width=1.5pt,dashed,color=red,<-] (0'') -- (1'');
							\draw[line width=1.5pt,dashed,color=red,->] (4'') -- (2'');
							\draw[line width=1.5pt,dotted,color=green,->] (2'') -- (1'');
							\draw[line width=1.5pt,dotted,color=green,->] (3'') -- (4'');
							\draw[line width=1.3pt,M edge] (2'') -- (3'');
						\end{tikzpicture}
					}
				\end{subfigure}
				\begin{subfigure}{.45\textwidth}
					\centering
					\scalebox{.8}{\begin{tikzpicture}[scale=1.5]
							
							\draw[fatpath,backcolor1] (-0.5,0.8) -- (-1.5,0.8) -- (-0.5,0.8) -- (0,0) -- (1,0) -- (0.5,0.8);
							\draw[fatpath,backcolor1] (1,-1.6) -- (0.5,-0.8) --(1,-1.6) -- (0.5,-0.8);
							\draw[fatpath,backcolor2] (0.5,-0.8) -- (-0.5,-0.8) -- (0.5,-0.8) -- (-0.5,-0.8); 
							\draw[fatpath,backcolor2] (2,-1.6) -- (1,-1.6) -- (2,-1.6) -- (1.5,-0.8) -- (0.5,-0.8) -- (1.5,-0.8);
							\draw[fatpath,backcolor2] (0,0) -- (0.5,0.8) -- (0,0) -- (0.5,0.8);

							\node (0) [black vertex] at (-1.5,0.8) {};
							\node (1) [blue vertex] at (-0.5,0.8) {};
							\node (2) [black vertex] at (0,0) {};
							\node (3) [black vertex] at (1,0) {};
							\node (4) [red vertex] at (0.5,0.8) {};
							\node () []		 at ($(1,0)+(60:1)$) {};

							\node (4') [black vertex] at ($(1,0)$) {};
							\node (3') [black vertex] at ($(3)+(0.5,-0.8)$) {};	
							\node (2') [black vertex] at ($(2)+(0.5,-0.8)$) {};
							\node (1') [black vertex] at ($(0,0)$) {};
							\node (0') [black vertex] at ($(0,0)+(-1,0)$) {};
							\node (5') [] at ($(3,0)$) {};

							\node (4'') [black vertex] at ($(3')$) {};
							\node (3'') [black vertex] at ($(3)+(1,-1.6)$) {};	
							\node (2'') [red vertex] at ($(2)+(1,-1.6)$) {};
							\node (1'') [black vertex] at ($(2')$) {};
							\node (0'') [blue vertex] at ($(1'')+(-1,0)$) {};
							\node (5'') [] at ($(3,0)$) {};

							\node () [] at ($(0)+(180:.2)$) {$a_0$};
							\node () [] at ($(1)+(90:.2)$) {$a_1$};
							\node () [] at ($(2)+(90:.3)$) {};
							\node () [] at ($(3)+(0:.55)$) {$a_3=b_4$};
							\node () [] at ($(4)+(90:.2)$) {$a_4$};

							\node () [] at ($(0')+(180:.2)$) {$b_0$};
							\node () [] at ($(1')+(-90:.3)-(0.3,0)$) {$b_1=a_2$};
							\node () [] at ($(2')+(90:.35)$) {};
							\node () [] at ($(3')+(0:.55)$) {$b_3=c_4$};
							\node () [] at ($(4')+(-90:.3)$) {};

							\node () [] at ($(0'')+(180:.2)$) {$c_0$};
							\node () [] at ($(1'')+(225:.3)-(0.3,0)$) {$c_1=b_2$};
							\node () [] at ($(2'')+(180:.2)$) {$c_2$};
							\node () [] at ($(3'')+(0:.25)$) {$c_3$};
							\node () [] at ($(4'')+(270:.35)$) {};

							\draw[line width=1.3pt,color=gray,<-] (0) -- (1);
							\draw[line width=1.5pt,dotted,color=green,->] (2) -- (1);
							\draw[line width=1.5pt,dotted,color=green,->] (3) -- (4);
							\draw[line width=1.5pt,dashed,color=red,->] (4) -- (2);
							\draw[line width=1.3pt,M edge] (2) -- (3);

							\draw[line width=1.5pt,dashed,color=red,<-] (0') -- (1');
							\draw[line width=1.5pt,dashed,color=red,->] (4') -- (2');
							\draw[line width=1.5pt,dotted,color=green,->] (2') -- (1');
							\draw[line width=1.5pt,dotted,color=green,->] (3') -- (4');
							\draw[line width=1.3pt,M edge] (2') -- (3');

							\draw[line width=1.5pt,dashed,color=red,<-] (0'') -- (1'');
							\draw[line width=1.5pt,dashed,color=red,->] (4'') -- (2'');
							\draw[line width=1.5pt,dotted,color=green,->] (2'') -- (1'');
							\draw[line width=1.5pt,dotted,color=green,->] (3'') -- (4'');
							\draw[line width=1.3pt,M edge] (2'') -- (3'');
						\end{tikzpicture}
					}
				\end{subfigure}
				\caption{Exchange performed in the proof of Claim~\ref{claim:no-short-A-chain}
					in the case \(\tr(T_3) = \cv_1(T_2)\). The red (resp. blue) circles illustrate the same vertex, i.e., $a_4=c_2$ (resp. $a_1=c_0$).}
				\label{fig:case4-similar4}
			\end{figure}

			\smallskip	\noindent
			\textbf{Case $\mathbf{i=2}$.}
			In this case, $c_4=\tr(T_3)=\cv_2(T_2)=b_2$.
			Put \(T_1' = a_0a_1a_2a_4a_3b_5\),
			\({T_2' = b_0b_1b_4b_3b_2c_2}\),
			\(T_3' = c_0c_1c_2c_3c_4b_1\)  (see Figure~\ref{fig:case5-similar5}) and let
			\(\D' = \big(\D\setminus\{T_1,T_2,T_3\}\big)\cup\{T_1',T_2',T_3'\}\).
			By Lemma~\ref{lemma:T'1-is-path}, \(T_1'\) is a path.
			Since \(G\) is simple, we have \(c_2\notin \{b_1,b_2,b_3,b_4\}\) and \(b_1\notin \{c_2,c_3,c_4\}\).
			By Lemma~\ref{lemma:no-cycles}, \(b_2 \neq a_0, ~c_2 \neq b_0,
			~b_1 \neq c_0\). Therefore, $T_2'$ is a path.					
			If \(b_1 = c_1\), then \(b_2b_1\) and \(c_2c_1\) are two green in edges at \(c_1\), a contradiction.
			Therefore, $T_3'$ is a path. 
			Analogously to the case above,
			\(\D'\) is an admissible decomposition of \(G\)
			such that \(\tau(\D')  = \tau(\D) - 3\), a contradiction to the minimality of \(\D\).
		\end{proof}
		
		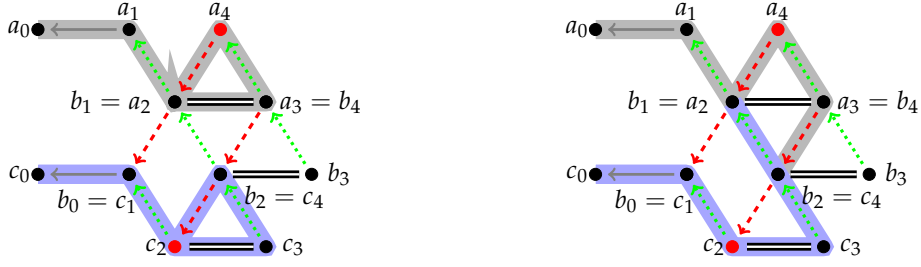
\begin{figure}
			\centering
			\begin{subfigure}{.45\textwidth}
				\centering
				\scalebox{.8}{\begin{tikzpicture}[scale=1.5]
						
						\draw[fatpath,backcolor1] (-0.5,0.8) -- (-1.5,0.8) -- (-0.5,0.8) -- (0,0) -- (0,0) -- (1,0) -- (0.5,0.8) -- (0,0);
						\draw[fatpath,backcolor2] (-0.5,-0.8) -- (-1.5,-0.8)-- (-0.5,-0.8) -- (0,-1.6) -- (1,-1.6) -- (0.5,-0.8) -- (0,-1.6);
					
						\node (0) [black vertex] at (-1.5,0.8) {};
						\node (1) [black vertex] at (-0.5,0.8) {};
						\node (2) [black vertex] at (0,0) {};
						\node (3) [black vertex] at (1,0) {};
						\node (4) [red vertex] at (0.5,0.8) {};
						\node () []		 at ($(0,0)+(60:1)$) {};

						\node (4') [black vertex] at ($(1,0)$) {};
						\node (3') [black vertex] at ($(3)+(0.5,-0.8)$) {};	
						\node (2') [black vertex] at ($(2)+(0.5,-0.8)$) {};
						\node (1') [black vertex] at ($(0,0)$) {};
						\node (0') [black vertex] at ($(2')-(1,0)$) {};
						\node (5') [] at ($(2,0)$) {};

						\node (4'') [black vertex] at ($(2')$) {};
						\node (3'') [black vertex] at ($(3)-(0,1.6)$) {};	
						\node (2'') [red vertex] at ($(2)-(0,1.6)$) {};
						\node (1'') [black vertex] at ($(4'')-(1,0)$) {};
						\node (0'') [black vertex] at ($(1'')+(-1,0)$) {};
						\node (5'') [] at ($(2,0)$) {};

						\node () [] at ($(0)+(180:.2)$) {$a_0$};
						\node () [] at ($(1)+(90:.2)$) {$a_1$};
						\node () [] at ($(2)+(90:.3)$) {};
						\node () [] at ($(3)+(0:.6)$) {$a_3=b_4$};
						\node () [] at ($(4)+(90:.2)$) {$a_4$};

						\node () [] at ($(0')+(-90:.3)-(0.35,0)$) {$b_0=c_1$};
						\node () [] at ($(1')+(180:.4)-(0.3,0)$) {$b_1=a_2$};
						\node () [] at ($(2')+(-90:.25)+(0.7,0)$) {$b_2=c_4$};
						\node () [] at ($(3')+(0:.3)$) {$b_3$};
						\node () [] at ($(4')+(-90:.3)$) {};

						\node () [] at ($(0'')+(180:.2)$) {$c_0$};
						\node () [] at ($(1'')+(225:.4)$) {};
						\node () [] at ($(2'')+(180:.2)$) {$c_2$};
						\node () [] at ($(3'')+(0:.3)$) {$c_3$};
						\node () [] at ($(4'')+(90:.3)$) {};

						\draw[line width=1.3pt,color=gray,<-] (0) -- (1);
						\draw[line width=1.5pt,dotted,color=green,->] (2) -- (1);
						\draw[line width=1.5pt,dotted,color=green,->] (3) -- (4);
						\draw[line width=1.5pt,dashed,color=red,->] (4) -- (2);
						\draw[line width=1.3pt,M edge] (2) -- (3);

						\draw[line width=1.5pt,dashed,color=red,<-] (0') -- (1');
						\draw[line width=1.5pt,dashed,color=red,->] (4') -- (2');
						\draw[line width=1.5pt,dotted,color=green,->] (2') -- (1');
						\draw[line width=1.5pt,dotted,color=green,->] (3') -- (4');
						\draw[line width=1.3pt,M edge] (2') -- (3');

						\draw[line width=1.3pt,color=gray,<-] (0'') -- (1'');
						\draw[line width=1.5pt,dashed,color=red,->] (4'') -- (2'');
						\draw[line width=1.5pt,dotted,color=green,->] (2'') -- (1'');
						\draw[line width=1.5pt,dotted,color=green,->] (3'') -- (4'');
						\draw[line width=1.3pt,M edge] (2'') -- (3'');
					\end{tikzpicture}
				}
			\end{subfigure}
			\begin{subfigure}{.45\textwidth}
				\centering
				\scalebox{.8}{\begin{tikzpicture}[scale=1.5]
					
						\draw[fatpath,backcolor1] (-0.5,0.8) -- (-1.5,0.8) -- (-0.5,0.8) -- (0,0) -- (0.5,0.8) -- (1,0) -- (0.5,-0.8);
						\draw[fatpath,backcolor2] (-0.5,-0.8) -- (-1.5,-0.8)-- (-0.5,-0.8) -- (0,-1.6) -- (1,-1.6) -- (0.5,-0.8) -- (0,0);
					
						\node (0) [black vertex] at (-1.5,0.8) {};
						\node (1) [black vertex] at (-0.5,0.8) {};
						\node (2) [black vertex] at (0,0) {};
						\node (3) [black vertex] at (1,0) {};
						\node (4) [red vertex] at (0.5,0.8) {};
						\node () []		 at ($(0,0)+(60:1)$) {};

						\node (4') [black vertex] at ($(1,0)$) {};
						\node (3') [black vertex] at ($(3)+(0.5,-0.8)$) {};	
						\node (2') [black vertex] at ($(2)+(0.5,-0.8)$) {};
						\node (1') [black vertex] at ($(0,0)$) {};
						\node (0') [black vertex] at ($(2')-(1,0)$) {};
						\node (5') [] at ($(2,0)$) {};

						\node (4'') [black vertex] at ($(2')$) {};
						\node (3'') [black vertex] at ($(3)-(0,1.6)$) {};	
						\node (2'') [red vertex] at ($(2)-(0,1.6)$) {};
						\node (1'') [black vertex] at ($(4'')-(1,0)$) {};
						\node (0'') [black vertex] at ($(1'')+(-1,0)$) {};
						\node (5'') [] at ($(2,0)$) {};

						\node () [] at ($(0)+(180:.2)$) {$a_0$};
						\node () [] at ($(1)+(90:.2)$) {$a_1$};
						\node () [] at ($(2)+(90:.3)$) {};
						\node () [] at ($(3)+(0:.6)$) {$a_3=b_4$};
						\node () [] at ($(4)+(90:.2)$) {$a_4$};

						\node () [] at ($(0')+(-90:.3)-(0.35,0)$) {$b_0=c_1$};
						\node () [] at ($(1')+(180:.4)-(0.3,0)$) {$b_1=a_2$};
						\node () [] at ($(2')+(-90:.25)+(0.7,0)$) {$b_2=c_4$};
						\node () [] at ($(3')+(0:.3)$) {$b_3$};
						\node () [] at ($(4')+(-90:.3)$) {};

						\node () [] at ($(0'')+(180:.2)$) {$c_0$};
						\node () [] at ($(1'')+(225:.4)$) {};
						\node () [] at ($(2'')+(180:.2)$) {$c_2$};
						\node () [] at ($(3'')+(0:.3)$) {$c_3$};
						\node () [] at ($(4'')+(90:.3)$) {};

						\draw[line width=1.3pt,color=gray,<-] (0) -- (1);
						\draw[line width=1.5pt,dotted,color=green,->] (2) -- (1);
						\draw[line width=1.5pt,dotted,color=green,->] (3) -- (4);
						\draw[line width=1.5pt,dashed,color=red,->] (4) -- (2);
						\draw[line width=1.3pt,M edge] (2) -- (3);

						\draw[line width=1.5pt,dashed,color=red,<-] (0') -- (1');
						\draw[line width=1.5pt,dashed,color=red,->] (4') -- (2');
						\draw[line width=1.5pt,dotted,color=green,->] (2') -- (1');
						\draw[line width=1.5pt,dotted,color=green,->] (3') -- (4');
						\draw[line width=1.3pt,M edge] (2') -- (3');

						\draw[line width=1.3pt,color=gray,<-] (0'') -- (1'');
						\draw[line width=1.5pt,dashed,color=red,->] (4'') -- (2'');
						\draw[line width=1.5pt,dotted,color=green,->] (2'') -- (1'');
						\draw[line width=1.5pt,dotted,color=green,->] (3'') -- (4'');
						\draw[line width=1.3pt,M edge] (2'') -- (3'');
					\end{tikzpicture}
				}
			\end{subfigure}
			\caption{Exchange performed in the proof of Claim~\ref{claim:no-short-A-chain}
				in the case \(\tr(T_3) = \cv_2(T_2)\). The red circles illustrate the same vertex.}
			\label{fig:case5-similar5}
		\end{figure}

		\begin{claim}\label{claim:open-chain}
			\(\D\) contains an open chain.
		\end{claim}	
		\begin{proof}
			Suppose, for a contradiction, that there is no open chain in \(\D\).
			Since \(\tau(\D)>0\), {\(\D\)~contains} an A-chain \(S = T_0,T_1,\ldots, T_{s-1}\).
			By Claim~\ref{claim:mixed-A-chain}, \(S\) is a mixed A-chain.
			Then we can find three consecutive elements in \(S\), say \(T_j, T_{j+1},T_{j+2}\),
			such that \(\cv_2(T_j)= \tr(T_{j+1})\) and \(\cv_1(T_{j+1}) = \tr(T_{j+2})\).
			By the cyclic structure of $S$, we may assume, without loss of generality, that \(j=0\).
			By Claim~\ref{claim:no-short-A-chain}, we have \(s\geq 4\), 
			and hence there is an element \(T_3\in\D\) such that
			\(\tr(T_3) = \cv_i(T_2)\), for some \(i\in\{1,2\}\).
			In what follows, the proof is divided according to \(i\).
			Let 
			\(T_0 = a_0a_1a_2a_3a_4a_5\),
			\(T_1 = b_0b_1b_2b_3b_4b_5\),
			\(T_2 = c_0c_1c_2c_3c_4c_5\), and
			\(T_3 = d_0d_1d_2d_3d_4d_5\),
			where \(a_5 = a_2\), \(b_5 = b_2\), \(c_5 = c_2\), \(d_5 = d_2\),
			and \(a_2a_3, b_2b_3, c_2c_3, d_2d_3 \in M\).
			By the choice of \(T_0\), \(T_1\), and \(T_2\), 
			we have \({b_4 = \tr(T_1) = \cv_2(T_0) = a_2}\),
			\({c_4 = \tr(T_2) = \cv_1(T_1) = b_3}\).
			The exchanges of edges performed here are analogous to the exchanges performed 
			on the proof of Claim~\ref{claim:AAB,AAC} of Lemma~\ref{lemma:P5-decomposition} for elements of type~A.
			
			\smallskip	\noindent 
		\textbf{Case} \(\mathbf{\btr\boldsymbol{(}T_3\boldsymbol{)} \boldsymbol{=} \bcv_1\boldsymbol{(}T_2\boldsymbol{)}}\)\textbf{.}
			In this case, we have \(d_4 = c_3\) and, by Lemma~\ref{lemma:type12}, \(c_2 = d_1\).
			Put \(T_1' = b_0b_1b_2b_4b_3c_2\),
			\(T_2' = c_0c_1c_4c_3c_2d_0\),
			\(T_3' = c_1d_1d_2d_3d_4d_2\) (see Figure~\ref{fig:case4-similar}),
			and let \(\D' = \big(\D\setminus\{T_1,T_2,T_3\}\big)\cup\{T_1',T_2',T_3'\}\).
			By Lemma~\ref{lemma:T'1-is-path}, \(T'_1\) is an element of type~C.		
			In what follows, we prove that \(T_2'\) is a path and \(T_3'\) is an element of type~A.
			Since \(G\) is simple, we have \(d_0\notin \{c_1,c_2,c_3,c_4\}\),
			and \(c_1\notin \{d_1,d_2,d_4\}\).
			By Lemma~\ref{lemma:no-cycles}, \(d_0 \neq c_0\),
			and hence, $T_2'$ is a path.
			If \(c_1 = d_3\),
			then \(b_2b_3\) and \(d_2d_3\) are two edges of \(M\) incident to \(c_1\),
			a contradiction.
			Therefore, $T_3'$ is an element of type~A,
			and hence definition~\ref{def:semi-complete-commutative}\eqref{def:semi-complete-commutative-types} holds.
			Also, \(\hang_{\D'}(v) \geq \hang_{\D}(v)\) 
			for every \(v\in V(G)\setminus\{a_2=b_4,b_3,c_3\}\). 
			Note also that \(b_4b_2\) is a hanging edge at \(a_2 = \cv_2(T_0)\) in \(\D\),
			but not in \(\D'\).
			However, \((T_0,T_1')\) is an exceptional pair.
			Also, $b_3$ and $c_3$ are not connection vertices of $\D'$.
			Since \(c_3\) is not a connection vertex in \(\D'\),
			the element \(T'_3\) is free.
			Therefore, \(S'=T'_3,\ldots, T_{s-1},T_1,T'_2\) is an open chain,
			and hence Definition~\ref{def:semi-complete-commutative}\eqref{def:semi-complete-commutative-hanging-edge} 
			holds for \(\D'\). 
			Finally, note that an element \(T\) of type~A in \(\D\setminus\{T_1,T_2,T_3\}\) 
			is either in an A-chain of \(\D\) different from \(S\),
			which implies that \(T\) is in an A-chain of \(\D'\),
			or is in \(S\), which implies that \(T\) is in \(S'\).
			Thus, Definition~\ref{def:semi-complete-commutative}\eqref{def:semi-complete-free} holds for \(\D'\).
			Therefore, \(\D'\) is an admissible decomposition of \(G\)
			such that \(\tau(\D')  = \tau(\D) - 2\), a contradiction to the minimality of \(\D\).	
			
			\begin{figure}[h]
				\centering
				\begin{subfigure}{.45\textwidth}
					\centering
					\scalebox{.8}{\begin{tikzpicture}[scale=1.5]
						
							\draw[fatpath,backcolor1] (-0.5,0.8) -- (-1.5,0.8) -- (-0.5,0.8) -- (0,0) -- (0,0) -- (1,0) -- (0.5,0.8) -- (0,0);
							\draw[fatpath,backcolor2] (0.5,-0.8) -- (-0.5,-0.8)-- (0.5,-0.8) -- (1,-1.6) -- (2,-1.6) -- (1.5,-0.8) -- (1,-1.6);

							\node (0) [black vertex] at (-1.5,0.8) {};
							\node (1) [black vertex] at (-0.5,0.8) {};
							\node (2) [black vertex] at (0,0) {};
							\node (3) [black vertex] at (1,0) {};
							\node (4) [black vertex] at (0.5,0.8) {};
							\node () []		 at ($(1,0)+(60:1)$) {};

							\node (4') [black vertex] at ($(1,0)$) {};
							\node (3') [black vertex] at ($(3)+(0.5,-0.8)$) {};	
							\node (2') [black vertex] at ($(2)+(0.5,-0.8)$) {};
							\node (1') [black vertex] at ($(0,0)$) {};
							\node (0') [black vertex] at ($(0,0)+(-1,0)$) {};
							\node (5') [] at ($(3,0)$) {};

							\node (4'') [black vertex] at ($(3')$) {};
							\node (3'') [black vertex] at ($(3)+(1,-1.6)$) {};	
							\node (2'') [black vertex] at ($(2)+(1,-1.6)$) {};
							\node (1'') [black vertex] at ($(2')$) {};
							\node (0'') [black vertex] at ($(1'')+(-1,0)$) {};
							\node (5'') [] at ($(3,0)$) {};

							\node () [] at ($(0)+(180:.2)$) {$b_0$};
							\node () [] at ($(1)+(90:.2)$) {$b_1$};
							\node () [] at ($(2)+(90:.3)$) {};
							\node () [] at ($(3)+(0:.55)$) {$b_3=c_4$};
							\node () [] at ($(4)+(90:.2)$) {$b_4=a_2$};

							\node () [] at ($(0')+(180:.2)$) {$c_0$};
							\node () [] at ($(1')+(-90:.3)-(0.3,0)$) {$c_1=b_2$};
							\node () [] at ($(2')+(90:.35)$) {};
							\node () [] at ($(3')+(0:.55)$) {$c_3=d_4$};
							\node () [] at ($(4')+(-90:.3)$) {};

							\node () [] at ($(0'')+(180:.2)$) {$d_0$};
							\node () [] at ($(1'')+(225:.3)-(0.3,0)$) {$d_1=c_2$};
							\node () [] at ($(2'')+(180:.2)$) {$d_2$};
							\node () [] at ($(3'')+(0:.25)$) {$d_3$};
							\node () [] at ($(4'')+(270:.35)$) {};

							\draw[line width=1.3pt,color=gray,<-] (0) -- (1);
							\draw[line width=1.5pt,dotted,color=green,->] (2) -- (1);
							\draw[line width=1.5pt,dotted,color=green,->] (3) -- (4);
							\draw[line width=1.5pt,dashed,color=red,->] (4) -- (2);
							\draw[line width=1.3pt,M edge] (2) -- (3);

							\draw[line width=1.5pt,dashed,color=red,<-] (0') -- (1');
							\draw[line width=1.5pt,dashed,color=red,->] (4') -- (2');
							\draw[line width=1.5pt,dotted,color=green,->] (2') -- (1');
							\draw[line width=1.5pt,dotted,color=green,->] (3') -- (4');
							\draw[line width=1.3pt,M edge] (2') -- (3');

							\draw[line width=1.5pt,dashed,color=red,<-] (0'') -- (1'');
							\draw[line width=1.5pt,dashed,color=red,->] (4'') -- (2'');
							\draw[line width=1.5pt,dotted,color=green,->] (2'') -- (1'');
							\draw[line width=1.5pt,dotted,color=green,->] (3'') -- (4'');
							\draw[line width=1.3pt,M edge] (2'') -- (3'');
						\end{tikzpicture}
					}
				\end{subfigure}
				\begin{subfigure}{.45\textwidth}
					\centering
					\scalebox{.8}{\begin{tikzpicture}[scale=1.5]
							
							\draw[fatpath,backcolor1] (-0.5,0.8) -- (-1.5,0.8) -- (-0.5,0.8) -- (0,0) -- (0.5,0.8) -- (1,0) -- (0.5,-0.8);
							\draw[fatpath,backcolor2] (0,0)-- (0.5,-0.8) --(1,-1.6) -- (2,-1.6) -- (1.5,-0.8) -- (1,-1.6);

							\node (0) [black vertex] at (-1.5,0.8) {};
							\node (1) [black vertex] at (-0.5,0.8) {};
							\node (2) [black vertex] at (0,0) {};
							\node (3) [black vertex] at (1,0) {};
							\node (4) [black vertex] at (0.5,0.8) {};
							\node () []		 at ($(1,0)+(60:1)$) {};

							\node (4') [black vertex] at ($(1,0)$) {};
							\node (3') [black vertex] at ($(3)+(0.5,-0.8)$) {};	
							\node (2') [black vertex] at ($(2)+(0.5,-0.8)$) {};
							\node (1') [black vertex] at ($(0,0)$) {};
							\node (0') [black vertex] at ($(0,0)+(-1,0)$) {};
							\node (5') [] at ($(3,0)$) {};

							\node (4'') [black vertex] at ($(3')$) {};
							\node (3'') [black vertex] at ($(3)+(1,-1.6)$) {};	
							\node (2'') [black vertex] at ($(2)+(1,-1.6)$) {};
							\node (1'') [black vertex] at ($(2')$) {};
							\node (0'') [black vertex] at ($(1'')+(-1,0)$) {};
							\node (5'') [] at ($(3,0)$) {};

							\node () [] at ($(0)+(180:.2)$) {$b_0$};
							\node () [] at ($(1)+(90:.2)$) {$b_1$};
							\node () [] at ($(2)+(90:.3)$) {};
							\node () [] at ($(3)+(0:.55)$) {$b_3=c_4$};
							\node () [] at ($(4)+(90:.2)$) {$b_4=a_2$};

							\node () [] at ($(0')+(180:.2)$) {$c_0$};
							\node () [] at ($(1')+(-90:.3)-(0.3,0)$) {$c_1=b_2$};
							\node () [] at ($(2')+(90:.35)$) {};
							\node () [] at ($(3')+(0:.55)$) {$c_3=d_4$};
							\node () [] at ($(4')+(-90:.3)$) {};

							\node () [] at ($(0'')+(180:.2)$) {$d_0$};
							\node () [] at ($(1'')+(225:.3)-(0.3,0)$) {$d_1=c_2$};
							\node () [] at ($(2'')+(180:.2)$) {$d_2$};
							\node () [] at ($(3'')+(0:.25)$) {$d_3$};
							\node () [] at ($(4'')+(270:.35)$) {};

							\draw[line width=1.3pt,color=gray,<-] (0) -- (1);
							\draw[line width=1.5pt,dotted,color=green,->] (2) -- (1);
							\draw[line width=1.5pt,dotted,color=green,->] (3) -- (4);
							\draw[line width=1.5pt,dashed,color=red,->] (4) -- (2);
							\draw[line width=1.3pt,M edge] (2) -- (3);

							\draw[line width=1.5pt,dashed,color=red,<-] (0') -- (1');
							\draw[line width=1.5pt,dashed,color=red,->] (4') -- (2');
							\draw[line width=1.5pt,dotted,color=green,->] (2') -- (1');
							\draw[line width=1.5pt,dotted,color=green,->] (3') -- (4');
							\draw[line width=1.3pt,M edge] (2') -- (3');

							\draw[line width=1.5pt,dashed,color=red,<-] (0'') -- (1'');
							\draw[line width=1.5pt,dashed,color=red,->] (4'') -- (2'');
							\draw[line width=1.5pt,dotted,color=green,->] (2'') -- (1'');
							\draw[line width=1.5pt,dotted,color=green,->] (3'') -- (4'');
							\draw[line width=1.3pt,M edge] (2'') -- (3'');
						\end{tikzpicture}
					}
				\end{subfigure}
				\caption{Exchange performed in the proof of Claim~\ref{claim:open-chain}
					in the case \(\tr(T_3) = \cv_1(T_2)\).}
				\label{fig:case4-similar}
			\end{figure}
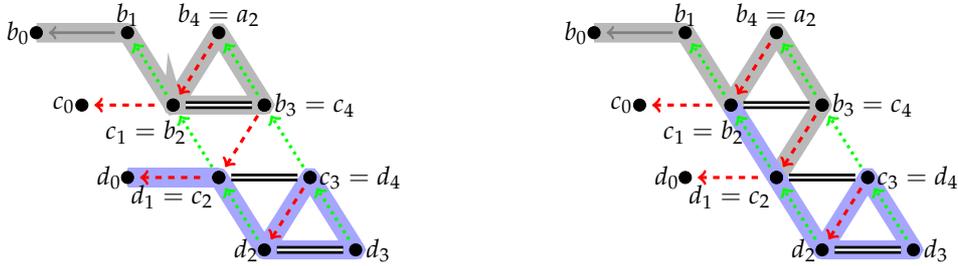

			\smallskip	\noindent
			\textbf{Case} \(\mathbf{\btr\boldsymbol{(}T_3\boldsymbol{)} \boldsymbol{=} \bcv_2\boldsymbol{(}T_2\boldsymbol{)}}\)\textbf{.}
			Put \(T_1' = b_0b_1b_2b_4b_3c_2\),
			\(T_2' = c_0c_1c_4c_3c_2d_2\),
			\(T_3' = d_0d_1d_2d_3d_4c_1\) (see Figure~\ref{fig:case5-similar}),
			and let \(\D' = \big(\D\setminus\{T_1,T_2,T_3\}\big)\cup\{T_1',T_2',T_3'\}\).
			By Lemma~\ref{lemma:T'1-is-path}, \(T'_1\) is an element of type~C.		
			In what follows, we prove that \(T_2'\) and \(T_3'\) are paths.
			Since \(G\) is simple, we have \(d_2\notin \{c_1,c_2,c_3,c_4\}\),
			and \(c_1\notin \{d_2,d_3,d_4\}\).
			By Lemma~\ref{lemma:no-cycles}, \(d_2 \neq c_0\), and \(c_1 \neq d_0\). 
			Therefore, $T_2'$ is a path.
			If \(c_1 = d_1\),
			then \(d_2d_1\) and \(c_2c_1\) are two green in edges of \(c_1\).
			Therefore, $T_3'$ is a path,
			and hence definition~\ref{def:semi-complete-commutative}\eqref{def:semi-complete-commutative-types} holds.
			Also, 
			\(\hang_{\D'}(v) \geq \hang_{\D}(v)\) 
			for every \(v\in V(G)\setminus\{a_2=b_4,b_3,c_3,d_3\}\). 
			Note also that \(b_4b_2\) is a hanging edge at \(a_2\) in \(\D\)
			but not in \(\D'\).
			However, \((T_0,T_1')\) is an exceptional pair.
			Also, $b_3$, $c_3$ and $d_3$ are not connection vertices of $\D'$.
			Thus, since \(d_2\) and \(d_3\) are not connection vertices in \(\D'\),
			the element \(T_4\) (or \(T_1\), if \(s=4\)) is free.
			Therefore, \(S'=T_4,\ldots, T_{s-1},T_1,T'_2\) is an open chain,
			and hence Definition~\ref{def:semi-complete-commutative}\eqref{def:semi-complete-commutative-hanging-edge} 
			holds for \(\D'\). 	
			Finally, note that an element \(T\) of type~A in \(\D\setminus\{T_1,T_2,T_3\}\) 
			is either in an A-chain of \(\D\) different from \(S\),
			which implies that \(T\) is in an A-chain of \(\D'\),
			or is in \(S\), which implies that \(T\) is in \(S'\).
			Thus, Definition~\ref{def:semi-complete-commutative}\eqref{def:semi-complete-free} holds for \(\D'\).
			Therefore, \(\D'\) is an admissible decomposition of \(G\)
			such that \(\tau(\D')  = \tau(\D) - 3\), a contradiction to the minimality of \(\D\).
		\end{proof}
		
		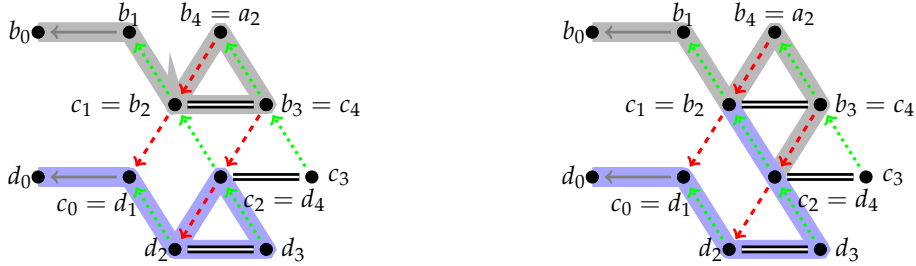
\begin{figure}[H]
			\centering
			\begin{subfigure}{.45\textwidth}
				\centering
				\scalebox{.8}{	\begin{tikzpicture}[scale=1.5]
					
						\draw[fatpath,backcolor1] (-0.5,0.8) -- (-1.5,0.8) -- (-0.5,0.8) -- (0,0) -- (0,0) -- (1,0) -- (0.5,0.8) -- (0,0);
						\draw[fatpath,backcolor2] (-0.5,-0.8) -- (-1.5,-0.8)-- (-0.5,-0.8) -- (0,-1.6) -- (1,-1.6) -- (0.5,-0.8) -- (0,-1.6);
					
						\node (0) [black vertex] at (-1.5,0.8) {};
						\node (1) [black vertex] at (-0.5,0.8) {};
						\node (2) [black vertex] at (0,0) {};
						\node (3) [black vertex] at (1,0) {};
						\node (4) [black vertex] at (0.5,0.8) {};
						\node () []		 at ($(0,0)+(60:1)$) {};

						\node (4') [black vertex] at ($(1,0)$) {};
						\node (3') [black vertex] at ($(3)+(0.5,-0.8)$) {};	
						\node (2') [black vertex] at ($(2)+(0.5,-0.8)$) {};
						\node (1') [black vertex] at ($(0,0)$) {};
						\node (0') [black vertex] at ($(2')-(1,0)$) {};
						\node (5') [] at ($(2,0)$) {};

						\node (4'') [black vertex] at ($(2')$) {};
						\node (3'') [black vertex] at ($(3)-(0,1.6)$) {};	
						\node (2'') [black vertex] at ($(2)-(0,1.6)$) {};
						\node (1'') [black vertex] at ($(4'')-(1,0)$) {};
						\node (0'') [black vertex] at ($(1'')+(-1,0)$) {};
						\node (5'') [] at ($(2,0)$) {};

						\node () [] at ($(0)+(180:.2)$) {$b_0$};
						\node () [] at ($(1)+(90:.2)$) {$b_1$};
						\node () [] at ($(2)+(90:.3)$) {};
						\node () [] at ($(3)+(0:.6)$) {$b_3=c_4$};
						\node () [] at ($(4)+(90:.2)$) {$b_4=a_2$};

						\node () [] at ($(0')+(-90:.3)-(0.35,0)$) {$c_0=d_1$};
						\node () [] at ($(1')+(180:.4)-(0.3,0)$) {$c_1=b_2$};
						\node () [] at ($(2')+(-90:.25)+(0.7,0)$) {$c_2=d_4$};
						\node () [] at ($(3')+(0:.3)$) {$c_3$};
						\node () [] at ($(4')+(-90:.3)$) {};

						\node () [] at ($(0'')+(180:.2)$) {$d_0$};
						\node () [] at ($(1'')+(225:.4)$) {};
						\node () [] at ($(2'')+(180:.2)$) {$d_2$};
						\node () [] at ($(3'')+(0:.3)$) {$d_3$};
						\node () [] at ($(4'')+(90:.3)$) {};

						\draw[line width=1.3pt,color=gray,<-] (0) -- (1);
						\draw[line width=1.5pt,dotted,color=green,->] (2) -- (1);
						\draw[line width=1.5pt,dotted,color=green,->] (3) -- (4);
						\draw[line width=1.5pt,dashed,color=red,->] (4) -- (2);
						\draw[line width=1.3pt,M edge] (2) -- (3);

						\draw[line width=1.5pt,dashed,color=red,<-] (0') -- (1');
						\draw[line width=1.5pt,dashed,color=red,->] (4') -- (2');
						\draw[line width=1.5pt,dotted,color=green,->] (2') -- (1');
						\draw[line width=1.5pt,dotted,color=green,->] (3') -- (4');
						\draw[line width=1.3pt,M edge] (2') -- (3');

						\draw[line width=1.3pt,color=gray,<-] (0'') -- (1'');
						\draw[line width=1.5pt,dashed,color=red,->] (4'') -- (2'');
						\draw[line width=1.5pt,dotted,color=green,->] (2'') -- (1'');
						\draw[line width=1.5pt,dotted,color=green,->] (3'') -- (4'');
						\draw[line width=1.3pt,M edge] (2'') -- (3'');
					\end{tikzpicture}
				}
			\end{subfigure}
			\begin{subfigure}{.45\textwidth}
				\centering
				\scalebox{.8}{\begin{tikzpicture}[scale=1.5]
						
						\draw[fatpath,backcolor1] (-0.5,0.8) -- (-1.5,0.8) -- (-0.5,0.8) -- (0,0) -- (0.5,0.8) -- (1,0) -- (0.5,-0.8);
						\draw[fatpath,backcolor2] (-0.5,-0.8) -- (-1.5,-0.8)-- (-0.5,-0.8) -- (0,-1.6) -- (1,-1.6) -- (0.5,-0.8) -- (0,0);
						
						\node (0) [black vertex] at (-1.5,0.8) {};
						\node (1) [black vertex] at (-0.5,0.8) {};
						\node (2) [black vertex] at (0,0) {};
						\node (3) [black vertex] at (1,0) {};
						\node (4) [black vertex] at (0.5,0.8) {};
						\node () []		 at ($(0,0)+(60:1)$) {};

						\node (4') [black vertex] at ($(1,0)$) {};
						\node (3') [black vertex] at ($(3)+(0.5,-0.8)$) {};	
						\node (2') [black vertex] at ($(2)+(0.5,-0.8)$) {};
						\node (1') [black vertex] at ($(0,0)$) {};
						\node (0') [black vertex] at ($(2')-(1,0)$) {};
						\node (5') [] at ($(2,0)$) {};

						\node (4'') [black vertex] at ($(2')$) {};
						\node (3'') [black vertex] at ($(3)-(0,1.6)$) {};	
						\node (2'') [black vertex] at ($(2)-(0,1.6)$) {};
						\node (1'') [black vertex] at ($(4'')-(1,0)$) {};
						\node (0'') [black vertex] at ($(1'')+(-1,0)$) {};
						\node (5'') [] at ($(2,0)$) {};

						\node () [] at ($(0)+(180:.2)$) {$b_0$};
						\node () [] at ($(1)+(90:.2)$) {$b_1$};
						\node () [] at ($(2)+(90:.3)$) {};
						\node () [] at ($(3)+(0:.6)$) {$b_3=c_4$};
						\node () [] at ($(4)+(90:.2)$) {$b_4=a_2$};

						\node () [] at ($(0')+(-90:.3)-(0.35,0)$) {$c_0=d_1$};
						\node () [] at ($(1')+(180:.4)-(0.3,0)$) {$c_1=b_2$};
						\node () [] at ($(2')+(-90:.25)+(0.7,0)$) {$c_2=d_4$};
						\node () [] at ($(3')+(0:.3)$) {$c_3$};
						\node () [] at ($(4')+(-90:.3)$) {};

						\node () [] at ($(0'')+(180:.2)$) {$d_0$};
						\node () [] at ($(1'')+(225:.4)$) {};
						\node () [] at ($(2'')+(180:.2)$) {$d_2$};
						\node () [] at ($(3'')+(0:.3)$) {$d_3$};
						\node () [] at ($(4'')+(90:.3)$) {};

						\draw[line width=1.3pt,color=gray,<-] (0) -- (1);
						\draw[line width=1.5pt,dotted,color=green,->] (2) -- (1);
						\draw[line width=1.5pt,dotted,color=green,->] (3) -- (4);
						\draw[line width=1.5pt,dashed,color=red,->] (4) -- (2);
						\draw[line width=1.3pt,M edge] (2) -- (3);

						\draw[line width=1.5pt,dashed,color=red,<-] (0') -- (1');
						\draw[line width=1.5pt,dashed,color=red,->] (4') -- (2');
						\draw[line width=1.5pt,dotted,color=green,->] (2') -- (1');
						\draw[line width=1.5pt,dotted,color=green,->] (3') -- (4');
						\draw[line width=1.3pt,M edge] (2') -- (3');

						\draw[line width=1.3pt,color=gray,<-] (0'') -- (1'');
						\draw[line width=1.5pt,dashed,color=red,->] (4'') -- (2'');
						\draw[line width=1.5pt,dotted,color=green,->] (2'') -- (1'');
						\draw[line width=1.5pt,dotted,color=green,->] (3'') -- (4'');
						\draw[line width=1.3pt,M edge] (2'') -- (3'');
					\end{tikzpicture}
				}
			\end{subfigure}
			\caption{Exchange performed in the proof of Claim~\ref{claim:open-chain}
				in the case \(\tr(T_3) = \cv_2(T_2)\).}
			\label{fig:case5-similar}
		\end{figure}

		Now, let \(S = T_0, T_1,\ldots, T_{s-1}\) be an open chain in \(\D\).
		Let \(T_j = a_{0,j}a_{1,j}a_{2,j}a_{3,j}a_{4,j}a_{5,j}\), for \(j\in\{0,\ldots,s-1\}\), where \(a_{2,j}a_{3,j}\in M\) and \(a_{5,j}=a_{2,j}\) for \(j\in\{0,\ldots,s-2\}\).
		
		\begin{claim}\label{claim:open-chains-1}
			\(T_1\) is an element of type~A and \(\tr(T_1) = \cv_1(T_0)\).
		\end{claim}
		
		\begin{proof}
			Suppose, for a contradiction, that \(T_1\) is not an element of type~A or \({\tr(T_1) = \cv_2(T_0)}\).
			We claim that \(T_1\) does not contain a hanging edge at \(\cv_1(T_0)\).
			Indeed, if \(T_1\) is not an element of type~A, then, by the definition of open chain, \(T_1\) is an element of type~C,
			and hence, by Remark~\ref{remark:exceptional-hanging-edge}, \(T_1\) does not have a hanging edge at \(\cv_1(T_0)\);
			and if \(T_1\) is an element of type~A for which \(\tr(T_1) = \cv_2(T_0)\), 
			then we have \(a_{4,1} = \tr(T_1) = \cv_2(T_0) = a_{2,0}\),
			and hence, if \(a_{1,1} = a_{3,0}\), then we have \(a_{4,1} + r + g = a_{1,1} = a_{3,0} = a_{2,0} - r - g\),
			which implies that \(2g + 2r = 0\), a contradiction.
			Therefore, \(T_1\) does not contain a hanging edge at \(\cv_1(T_0)\).
			By Definition~\ref{def:semi-complete-commutative}\eqref{def:semi-complete-commutative-hanging-edge},
			there are two hanging edges at \(\cv_1(T_0)\).
			Thus, there is an element \(T = u_0u_1u_2u_3u_4u_5\) in \(\D\setminus\{T_0,T_1\}\) that contains a hanging edge, say \(u_1u_0\), at \(\cv_1(T_0)\).
			Note that all the edges incident to \(a_{2,0}\) are in \(E(T_0)\cup E(T_1)\).
			Let \(T_0' = a_{0,0}a_{1,0}a_{2,0}a_{4,0}a_{3,0}u_0\) and \(T' = a_{2,0}u_1u_2u_3u_4u_5\) and put \(\D' = \big(\D\setminus\{T_0,T\}\big)\cup\{T_0',T'\}\).
			By Lemma~\ref{lemma:T'1-is-path}, \(T_0'\) is a path;
			and since all the edges incident to \(a_{2,0}\) are in \(E(T_0)\cup E(T_1)\), we have \(a_{2,0}\notin \{u_1,u_2,u_3,u_4,u_5\}\), 
			and hence if \(T\) is a path (resp. an element of type~A), then \(T'\) is a path (resp. an element of type~A).
			Thus Definition~\ref{def:semi-complete-commutative}\eqref{def:complete-commutative-types} holds for \(\D'\).	
			
			To check that $\D'$ is an admissible decomposition first
			note that \(\hang_{\D'}(v) \geq \hang_{\D}(v)\) 
			for every \(v\in V(G)\setminus\{a_{4,0}\}\). 
			Thus, since \(T_0\) is a free element, \(a_{4,0}\) is not a connection vertex in \(\D\),
			and hence \(a_{4,0}\) is not a connection vertex in \(\D'\).
			Note also that \(T_1\) is either an element of type~C or a free element of type~A,
			and hence \(S' = T_1,\ldots, T_{s-1}\) is an open chain.
			Thus, Definition~\ref{def:semi-complete-commutative}\eqref{def:semi-complete-commutative-hanging-edge} 
			holds for \(\D'\). 
			Analogously to the cases above, every element of type~A in $\D'$ is in an A-chain of $\D'$.
			Thus, Definition~\ref{def:semi-complete-commutative}\eqref{def:semi-complete-free} holds for \(\D'\).
			Therefore, \(\D'\) is an admissible decomposition of \(G\)
			such that \(\tau(\D')  = \tau(\D) - 1\), a contradiction to the minimality of \(\D\).	
		\end{proof}
		
		By Claim~\ref{claim:open-chains-1}, we have \(s\geq 3\),
		and hence, there is an element \(T_2\) in \(S\).
		Note that, by Lemma~\ref{lemma:type12}, since \(\tr(T_1) = \cv_1(T_0)\), we have \(\aux(T_1) = \cv_2(T_0)\).
		This implies that \(a_{1,1}a_{0,1}\in F_r\) because all the edges incident to \(a_{1,1} = \cv_2(T_0)\) are in \(E(T_0)\cup E(T_1)\).
		
		\begin{claim}\label{claim:open-chains-2}
			\(T_2\) is of type~A.
		\end{claim}
		
		\begin{proof}
			Suppose, for a contradiction, that \(T_2\) is not of type~A,
			then \(T_2\) is an element of type~C
			and \((T_1,T_2)\) is an exceptional pair.
			Thus, we can write \(T_2 = a_{0,2}a_{1,2}a_{2,2}a_{3,2}a_{4,2}a_{5,2}\)
			such that \(a_{2,1}=a_{5,1} = a_{3,2}\),
			and \(a_{2,2}a_{1,2},a_{4,2}a_{3,2}\in F_g\), 
			\(a_{3,2}a_{2,2},a_{4,2}a_{5,2}\in F_r\),
			\(a_{1,2}a_{0,2}\in M_{g,r}\cup F_g\cup F_r\).
			We claim that \(a_{0,1} = a_{1,2}\).
			Indeed, since \({a_{1,1}a_{0,1}\in F_r}\), we have \({a_{0,1} = a_{1,1} + r = a_{2,1} + g + r}\),
			but by the definition of type~C, we have \({a_{1,2} = a_{2,2} + g = a_{3,2} + r + g}\).
			Thus, since \(a_{3,2} = a_{2,1}\), we obtain \(a_{0,1} = a_{1,2}\).	
			Now, put 
			\(T'_0 = a_{0,0}a_{1,0}a_{2,0}a_{4,0}a_{3,0}a_{2,1}\),
			\(T'_1 = a_{1,1}a_{4,1}a_{3,1}a_{2,1}a_{2,2}a_{0,1}\), and
			\(T'_2 = a_{0,2}a_{1,2}a_{1,1}a_{3,2}a_{4,2}a_{5,2}\)
			(see Figure~\ref{fig:claim5}),
			and put \(\D' = \big(\D\setminus\{T_0,T_1,T_2\}\big)\cup\{T'_0,T'_1,T'_2\}\).
			By Lemma~\ref{lemma:T'1-is-path}, \(T_0'\) is a path;
			since \(G\) is a simple graph, 
			\(a_{2,2}\notin \{a_{1,1},a_{4,1},a_{3,1},a_{2,1},a_{0,1}\}\), 
			and hence \(T'_1\) is a path;
			and since all edges incident to \(a_{1,1}\) are in \(E(T_0)\cup E(T_1)\),
			we have \(a_{1,1} \notin V(T_2)\), which implies that \(T'_2\) is a path.
			Thus Definition~\ref{def:semi-complete-commutative}\eqref{def:complete-commutative-types} holds for \(\D'\).	
			
			\begin{figure}[H]
				\centering
				\begin{subfigure}{.45\textwidth}
					\centering
					\scalebox{.8}{\begin{tikzpicture}[scale=1.5]
						
							\draw[fatpath,backcolor1] (-0.5,0.8) -- (-1.5,0.8) -- (-0.5,0.8) -- (0,0) -- (0,0) -- (1,0) -- (0.5,0.8) -- (0,0);
							\draw[fatpath,backcolor2] (0,0) -- (-0.5,-0.8) -- (0,0) -- (0.5,-0.8) -- (1,0) -- (1.5,-0.8) -- (0.5,-0.8);
						
							\node (0) [black vertex] at (-1.5,0.8) {};
							\node (1) [black vertex] at (-0.5,0.8) {};
							\node (2) [black vertex] at (0,0) {};
							\node (3) [black vertex] at (1,0) {};
							\node (4) [black vertex] at (0.5,0.8) {};
							\node () []		 at ($(0,0)+(60:1)$) {};

							\node (4') [black vertex] at ($(1,0)$) {};
							\node (3') [black vertex] at ($(3)+(0.5,-0.8)$) {};	
							\node (2') [black vertex] at ($(2)+(0.5,-0.8)$) {};
							\node (1') [black vertex] at ($(0,0)$) {};
							\node (0') [black vertex] at ($(0,0)+(-0.5,-0.8)$) {};
							\node (5') [] at ($(2,0)$) {};

							\node (1'') [black vertex] at ($(0')$) {};
							\node (0'') [black vertex] at ($(1'')+(-0.5,-0.8)$) {};	
							\node (2'') [black vertex] at ($(1'')+(0.5,-0.8)$) {};
							\node (3'') [black vertex] at ($(2')$) {};
							\node (4'') [black vertex] at ($(2')+(0.5,-0.8)$) {};
							\node (5'') [black vertex] at ($(4'')+(1,0)$) {};

							\node () [] at ($(0)+(180:.35)$) {$a_{0,0}$};
							\node () [] at ($(1)+(90:.2)$) {$a_{1,0}$};
							\node () [] at ($(2)+(180:.3)-(0.5,0)$) {$a_{2,0}=a_{1,1}$};
							\node () [] at ($(3)+(0:.35)+(0.4,0)$) {$a_{3,0}=a_{4,1}$};
							\node () [] at ($(4)+(90:.2)$) {$a_{4,0}$};

							\node () [] at ($(0')+(180:.35)-(0.4,.)$) {$a_{0,1}=a_{1,2}$};
							\node () [] at ($(1')+(-90:.3)$) {};
							\node () [] at ($(2')+(-90:.35)+(0.9,0)$) {$a_{2,1}=a_{3,2}$};
							\node () [] at ($(3')+(0:.35)$) {$a_{3,1}$};
							\node () [] at ($(4')+(-90:.3)$) {};

							\node () [] at ($(0'')+(180:.35)$) {$a_{0,2}$};
							\node () [] at ($(1'')+(270:.3)$) {};
							\node () [] at ($(2'')+(180:.35)$) {$a_{2,2}$};
							\node () [] at ($(3'')+(270:.25)$) {};
							\node () [] at ($(4'')+(180:.35)$) {$a_{4,2}$};
							\node () [] at ($(5'')+(0:.35)$) {$a_{5,2}$};

							\draw[line width=1.3pt,color=gray,<-] (0) -- (1);
							\draw[line width=1.5pt,dotted,color=green,->] (2) -- (1);
							\draw[line width=1.5pt,dotted,color=green,->] (3) -- (4);
							\draw[line width=1.5pt,dashed,color=red,->] (4) -- (2);
							\draw[line width=1.3pt,M edge] (2) -- (3);

							\draw[line width=1.5pt,dashed,color=red,<-] (0') -- (1');
							\draw[line width=1.5pt,dashed,color=red,->] (4') -- (2');
							\draw[line width=1.5pt,dotted,color=green,->] (2') -- (1');
							\draw[line width=1.5pt,dotted,color=green,->] (3') -- (4');
							\draw[line width=1.3pt,M edge] (2') -- (3');

							\draw[line width=1.5pt,color=gray,<-] (0'') -- (1'');
							\draw[line width=1.5pt,dashed,color=red,->] (4'') -- (5'');
							\draw[line width=1.5pt,dotted,color=green,->] (2'') -- (1'');
							\draw[line width=1.5pt,dotted,color=green,->] (4'') -- (3'');
							\draw[line width=1.5pt,dashed,color=red,->] (3'') -- (2'');
						\end{tikzpicture}
					}
				\end{subfigure}
				\begin{subfigure}{.45\textwidth}
					\centering
					\scalebox{.8}{\begin{tikzpicture}[scale=1.5]
						
							\draw[fatpath,backcolor1] (-0.5,0.8) -- (-1.5,0.8) -- (-0.5,0.8) -- (0,0) -- (0.5,0.8) -- (1,0) -- (0.5,-0.8);
							\draw[fatpath,backcolor2] (0,0) -- (1,0) -- (1.5,-0.8) -- (0.5,-0.8) -- (0,-1.6) -- (-0.5,-0.8) -- (0,-1.6);

							\node (0) [black vertex] at (-1.5,0.8) {};
							\node (1) [black vertex] at (-0.5,0.8) {};
							\node (2) [black vertex] at (0,0) {};
							\node (3) [black vertex] at (1,0) {};
							\node (4) [black vertex] at (0.5,0.8) {};
							\node () []		 at ($(0,0)+(60:1)$) {};

							\node (4') [black vertex] at ($(1,0)$) {};
							\node (3') [black vertex] at ($(3)+(0.5,-0.8)$) {};	
							\node (2') [black vertex] at ($(2)+(0.5,-0.8)$) {};
							\node (1') [black vertex] at ($(0,0)$) {};
							\node (0') [black vertex] at ($(0,0)+(-0.5,-0.8)$) {};
							\node (5') [] at ($(2,0)$) {};

							\node (1'') [black vertex] at ($(0')$) {};
							\node (0'') [black vertex] at ($(1'')+(-0.5,-0.8)$) {};	
							\node (2'') [black vertex] at ($(1'')+(0.5,-0.8)$) {};
							\node (3'') [black vertex] at ($(2')$) {};
							\node (4'') [black vertex] at ($(2')+(0.5,-0.8)$) {};
							\node (5'') [black vertex] at ($(4'')+(1,0)$) {};

							\node () [] at ($(0)+(180:.35)$) {$a_{0,0}$};
							\node () [] at ($(1)+(90:.2)$) {$a_{1,0}$};
							\node () [] at ($(2)+(180:.3)-(0.5,0)$) {$a_{2,0}=a_{1,1}$};
							\node () [] at ($(3)+(0:.35)+(0.4,0)$) {$a_{3,0}=a_{4,1}$};
							\node () [] at ($(4)+(90:.2)$) {$a_{4,0}$};

							\node () [] at ($(0')+(180:.35)-(0.4,.)$) {$a_{0,1}=a_{1,2}$};
							\node () [] at ($(1')+(-90:.3)$) {};
							\node () [] at ($(2')+(-90:.35)+(0.9,0)$) {$a_{2,1}=a_{3,2}$};
							\node () [] at ($(3')+(0:.35)$) {$a_{3,1}$};
							\node () [] at ($(4')+(-90:.3)$) {};

							\node () [] at ($(0'')+(180:.35)$) {$a_{0,2}$};
							\node () [] at ($(1'')+(270:.3)$) {};
							\node () [] at ($(2'')+(180:.35)$) {$a_{2,2}$};
							\node () [] at ($(3'')+(270:.25)$) {};
							\node () [] at ($(4'')+(180:.35)$) {$a_{4,2}$};
							\node () [] at ($(5'')+(0:.35)$) {$a_{5,2}$};

							\draw[line width=1.3pt,color=gray,<-] (0) -- (1);
							\draw[line width=1.5pt,dotted,color=green,->] (2) -- (1);
							\draw[line width=1.5pt,dotted,color=green,->] (3) -- (4);
							\draw[line width=1.5pt,dashed,color=red,->] (4) -- (2);
							\draw[line width=1.3pt,M edge] (2) -- (3);

							\draw[line width=1.5pt,dashed,color=red,<-] (0') -- (1');
							\draw[line width=1.5pt,dashed,color=red,->] (4') -- (2');
							\draw[line width=1.5pt,dotted,color=green,->] (2') -- (1');
							\draw[line width=1.5pt,dotted,color=green,->] (3') -- (4');
							\draw[line width=1.3pt,M edge] (2') -- (3');

							\draw[line width=1.5pt,color=gray,<-] (0'') -- (1'');
							\draw[line width=1.5pt,dashed,color=red,->] (4'') -- (5'');
							\draw[line width=1.5pt,dotted,color=green,->] (2'') -- (1'');
							\draw[line width=1.5pt,dotted,color=green,->] (4'') -- (3'');
							\draw[line width=1.5pt,dashed,color=red,->] (3'') -- (2'');
						\end{tikzpicture}
					}
				\end{subfigure}
				\caption{Exchange of edges between two elements of type~A
					and the elements of an exceptional pair
					in the proof of Claim~\ref{claim:open-chains-2}.}
				\label{fig:claim5}
			\end{figure}
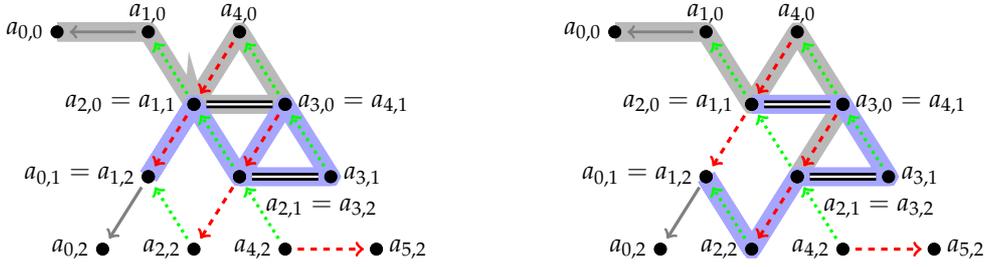
			
			To check that $\D'$ is an admissible decomposition first
			note that \(\hang_{\D'}(v) \geq \hang_{\D}(v)\) 
			for every \(v\in V(G)\setminus\{a_{4,0},a_{1,1},a_{3,1}\}\). 
			Thus, since \(T_0\) is a free element, \(a_{4,0}\) is not a connection vertex in \(\D\),
			and hence \(a_{4,0}\) is not a connection vertex in \(\D'\);
			and since the edges of \(M\) incident to \(a_{1,1}\) and \(a_{3,1}\) are in \(T'_1\),
			the vertices \(a_{1,1}\) and \(a_{3,1}\) are not connection vertices in \(\D'\).
			Note also that no element of \(S\) is in \(\D'\), and hence there are no open chains in \(\D'\).
			Thus, Definitions~\ref{def:semi-complete-commutative}\eqref{def:semi-complete-commutative-hanging-edge} and~\ref{def:semi-complete-commutative}\eqref{def:semi-complete-free}
			hold for~\(\D'\). 
			Therefore, \(\D'\) is an admissible decomposition of \(G\)
			such that \(\tau(\D')  = \tau(\D) - 2\), a contradiction to the minimality of~\(\D\).	
		\end{proof}
		
		Now, by Claim~\ref{claim:open-chains-2}, we have \(s\geq 4\).
		In what follows, we divide the proof depending on whether \(\tr(T_2) = \cv_1(T_1)\) or \(\tr(T_2) = \cv_2(T_1)\).
		
		\smallskip	\noindent
		\textbf{Case} \(\mathbf{\btr\boldsymbol{(}T_2\boldsymbol{)} \boldsymbol{=} \bcv_1\boldsymbol{(}T_1\boldsymbol{)}}\)\textbf{.}
		By Lemma~\ref{lemma:type12}, we have \(a_{1,2}=\aux(T_2) = \cv_2(T_1)=a_{2,1}\).
		Put \(T_0' = a_{0,0}a_{1,0}a_{2,0}a_{4,0}a_{3,0}a_{2,1}\),
		\(T_1' = a_{0,1}a_{1,1}a_{4,1}a_{3,1}a_{2,1}a_{0,2}\),
		\(T_2' = a_{1,1}a_{1,2}a_{2,2}a_{3,2}a_{4,2}a_{2,2}\) (see Figure~\ref{fig:case4-similar2}),
		let \(\D' = \big(\D\setminus\{T_0,T_1,T_2\}\big)\cup\{T_0',T_1',T_2'\}\),
		and let \(S' = T'_2, T_3, \dots, T_{s-1}\).
		By Lemma~\ref{lemma:T'1-is-path}, \(T'_0\) is an element of type~C.		
		In what follows, we prove that \(T_1'\) is a path and \(T_2'\) is an element of type~A.
		Since \(G\) is simple, we have \(a_{0,2}\notin \{a_{1,1},a_{2,1},a_{3,1},a_{4,1}\}\),
		and \(a_{1,1}\notin \{a_{1,2},a_{2,2},a_{4,2}\}\).
		By Lemma~\ref{lemma:no-cycles}, \(a_{0,2} \neq a_{0,1}\),
		and hence, $T_1'$ is a path.
		If \(a_{1,1} = a_{3,2}\),
		then \(a_{2,0}a_{3,0}\) and \(a_{2,2}a_{3,2}\) are two edges of \(M\) incident to \(a_{1,1}\),
		a contradiction.
		Therefore,~$T_2'$ is an element of type~A.
		
		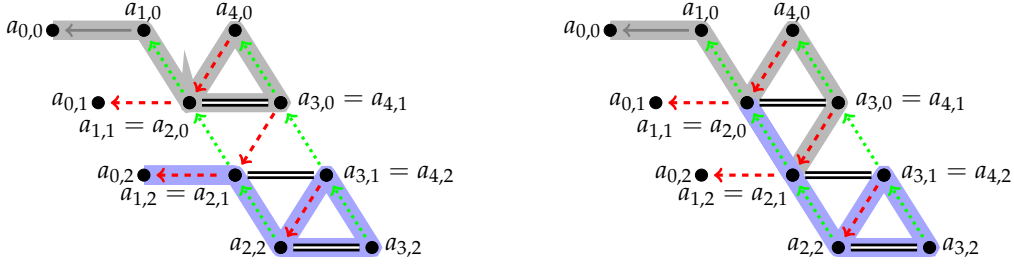
\begin{figure}[H]
			\centering
			\begin{subfigure}{.45\textwidth}
				\centering
				\scalebox{.8}{\begin{tikzpicture}[scale=1.5]
					
						\draw[fatpath,backcolor1] (-0.5,0.8) -- (-1.5,0.8) -- (-0.5,0.8) -- (0,0) -- (0,0) -- (1,0) -- (0.5,0.8) -- (0,0);
						\draw[fatpath,backcolor2] (0.5,-0.8) -- (-0.5,-0.8) -- (0.5,-0.8) -- (1,-1.6) -- (2,-1.6) -- (1.5,-0.8) -- (1,-1.6);

						\node (0) [black vertex] at (-1.5,0.8) {};
						\node (1) [black vertex] at (-0.5,0.8) {};
						\node (2) [black vertex] at (0,0) {};
						\node (3) [black vertex] at (1,0) {};
						\node (4) [black vertex] at (0.5,0.8) {};
						\node () []		 at ($(1,0)+(60:1)$) {};

						\node (4') [black vertex] at ($(1,0)$) {};
						\node (3') [black vertex] at ($(3)+(0.5,-0.8)$) {};	
						\node (2') [black vertex] at ($(2)+(0.5,-0.8)$) {};
						\node (1') [black vertex] at ($(0,0)$) {};
						\node (0') [black vertex] at ($(0,0)+(-1,0)$) {};
						\node (5') [] at ($(3,0)$) {};

						\node (4'') [black vertex] at ($(3')$) {};
						\node (3'') [black vertex] at ($(3)+(1,-1.6)$) {};	
						\node (2'') [black vertex] at ($(2)+(1,-1.6)$) {};
						\node (1'') [black vertex] at ($(2')$) {};
						\node (0'') [black vertex] at ($(1'')+(-1,0)$) {};
						\node (5'') [] at ($(3,0)$) {};

						\node () [] at ($(0)+(180:.3)$) {$a_{0,0}$};
						\node () [] at ($(1)+(90:.2)$) {$a_{1,0}$};
						\node () [] at ($(2)+(90:.2)+(0.5,0)$) {};
						\node () [] at ($(3)+(0:.2)+(0.6,0)$) {$a_{3,0}=a_{4,1}$};
						\node () [] at ($(4)+(90:.2)$) {$a_{4,0}$};

						\node () [] at ($(0')+(180:.3)$) {$a_{0,1}$};
						\node () [] at ($(1')+(-90:.3)-(0.6,0)$) {$a_{1,1}=a_{2,0}$};
						\node () [] at ($(2')+(90:.35)$) {};
						\node () [] at ($(3')+(0:.2)+(0.6,0)$) {$a_{3,1}=a_{4,2}$};
						\node () [] at ($(4')+(-90:.2)+(0.4,0)$) {};

						\node () [] at ($(0'')+(180:.3)$) {$a_{0,2}$};
						\node () [] at ($(1'')+(225:.35)-(0.4,0)$) {$a_{1,2}=a_{2,1}$};
						\node () [] at ($(2'')+(180:.35)$) {$a_{2,2}$};
						\node () [] at ($(3'')+(0:.35)$) {$a_{3,2}$};
						\node () [] at ($(4'')+(-90:.35)+(0.3,0)$) {};

						\draw[line width=1.3pt,color=gray,<-] (0) -- (1);
						\draw[line width=1.5pt,dotted,color=green,->] (2) -- (1);
						\draw[line width=1.5pt,dotted,color=green,->] (3) -- (4);
						\draw[line width=1.5pt,dashed,color=red,->] (4) -- (2);
						\draw[line width=1.3pt,M edge] (2) -- (3);

						\draw[line width=1.5pt,dashed,color=red,<-] (0') -- (1');
						\draw[line width=1.5pt,dashed,color=red,->] (4') -- (2');
						\draw[line width=1.5pt,dotted,color=green,->] (2') -- (1');
						\draw[line width=1.5pt,dotted,color=green,->] (3') -- (4');
						\draw[line width=1.3pt,M edge] (2') -- (3');

						\draw[line width=1.5pt,dashed,color=red,<-] (0'') -- (1'');
						\draw[line width=1.5pt,dashed,color=red,->] (4'') -- (2'');
						\draw[line width=1.5pt,dotted,color=green,->] (2'') -- (1'');
						\draw[line width=1.5pt,dotted,color=green,->] (3'') -- (4'');
						\draw[line width=1.3pt,M edge] (2'') -- (3'');
					\end{tikzpicture}
				}
			\end{subfigure}
			\begin{subfigure}{.45\textwidth}
				\centering
				\scalebox{.8}{\begin{tikzpicture}[scale=1.5]
					
						\draw[fatpath,backcolor1] (-0.5,0.8) -- (-1.5,0.8) -- (-0.5,0.8) -- (0,0) -- (0.5,0.8) -- (1,0) -- (0.5,-0.8);
						\draw[fatpath,backcolor2] (0.5,-0.8) -- (0,0) -- (0.5,-0.8) -- (1,-1.6) -- (2,-1.6) -- (1.5,-0.8) -- (1,-1.6);

						\node (0) [black vertex] at (-1.5,0.8) {};
						\node (1) [black vertex] at (-0.5,0.8) {};
						\node (2) [black vertex] at (0,0) {};
						\node (3) [black vertex] at (1,0) {};
						\node (4) [black vertex] at (0.5,0.8) {};
						\node () []		 at ($(1,0)+(60:1)$) {};

						\node (4') [black vertex] at ($(1,0)$) {};
						\node (3') [black vertex] at ($(3)+(0.5,-0.8)$) {};	
						\node (2') [black vertex] at ($(2)+(0.5,-0.8)$) {};
						\node (1') [black vertex] at ($(0,0)$) {};
						\node (0') [black vertex] at ($(0,0)+(-1,0)$) {};
						\node (5') [] at ($(3,0)$) {};

						\node (4'') [black vertex] at ($(3')$) {};
						\node (3'') [black vertex] at ($(3)+(1,-1.6)$) {};	
						\node (2'') [black vertex] at ($(2)+(1,-1.6)$) {};
						\node (1'') [black vertex] at ($(2')$) {};
						\node (0'') [black vertex] at ($(1'')+(-1,0)$) {};
						\node (5'') [] at ($(3,0)$) {};

						\node () [] at ($(0)+(180:.3)$) {$a_{0,0}$};
						\node () [] at ($(1)+(90:.2)$) {$a_{1,0}$};
						\node () [] at ($(2)+(90:.2)+(0.5,0)$) {};
						\node () [] at ($(3)+(0:.2)+(0.6,0)$) {$a_{3,0}=a_{4,1}$};
						\node () [] at ($(4)+(90:.2)$) {$a_{4,0}$};

						\node () [] at ($(0')+(180:.3)$) {$a_{0,1}$};
						\node () [] at ($(1')+(-90:.3)-(0.6,0)$) {$a_{1,1}=a_{2,0}$};
						\node () [] at ($(2')+(90:.35)$) {};
						\node () [] at ($(3')+(0:.2)+(0.6,0)$) {$a_{3,1}=a_{4,2}$};
						\node () [] at ($(4')+(-90:.2)+(0.4,0)$) {};

						\node () [] at ($(0'')+(180:.3)$) {$a_{0,2}$};
						\node () [] at ($(1'')+(225:.35)-(0.4,0)$) {$a_{1,2}=a_{2,1}$};
						\node () [] at ($(2'')+(180:.35)$) {$a_{2,2}$};
						\node () [] at ($(3'')+(0:.35)$) {$a_{3,2}$};
						\node () [] at ($(4'')+(-90:.35)+(0.3,0)$) {};

						\draw[line width=1.3pt,color=gray,<-] (0) -- (1);
						\draw[line width=1.5pt,dotted,color=green,->] (2) -- (1);
						\draw[line width=1.5pt,dotted,color=green,->] (3) -- (4);
						\draw[line width=1.5pt,dashed,color=red,->] (4) -- (2);
						\draw[line width=1.3pt,M edge] (2) -- (3);

						\draw[line width=1.5pt,dashed,color=red,<-] (0') -- (1');
						\draw[line width=1.5pt,dashed,color=red,->] (4') -- (2');
						\draw[line width=1.5pt,dotted,color=green,->] (2') -- (1');
						\draw[line width=1.5pt,dotted,color=green,->] (3') -- (4');
						\draw[line width=1.3pt,M edge] (2') -- (3');

						\draw[line width=1.5pt,dashed,color=red,<-] (0'') -- (1'');
						\draw[line width=1.5pt,dashed,color=red,->] (4'') -- (2'');
						\draw[line width=1.5pt,dotted,color=green,->] (2'') -- (1'');
						\draw[line width=1.5pt,dotted,color=green,->] (3'') -- (4'');
						\draw[line width=1.3pt,M edge] (2'') -- (3'');
					\end{tikzpicture}
				}
			\end{subfigure}
			\caption{Exchange of edges between the three first elements in an open A-chain with at least four elements, in the case \(\tr(T_3) = \cv_1(T_2)\).}
			\label{fig:case4-similar2}
		\end{figure}
		
		To check that $\D'$ is an admissible decomposition first
		note that \(\hang_{\D'}(v) \geq \hang_{\D}(v)\) 
		for every \(v\in V(G)\setminus\{a_{3,0}, a_{3,1}, a_{4,0}\}\),
		but since \(T_0\) is free, \(a_{4,0}\) is not a connection vertex in \(\D\),
		and hence \(a_{3,0}, a_{3,1},a_{4,0}\) are not connection vertices in \(\D'\).
		Thus, since \(a_{3,1}\) is not a connection vertex in \(\D'\),
		the element \(T'_2\) is free.
		Therefore, \(S'\) is an open chain,
		and hence Definition~\ref{def:semi-complete-commutative}\eqref{def:semi-complete-commutative-hanging-edge} 
		holds for \(\D'\). 
		Since, $T'_0$ and $T'_1$ are paths and $T'_2$ is an element of type~A, 
		Definition~\ref{def:semi-complete-commutative}\eqref{def:complete-commutative-types} holds for \(\D'\).	
		Analogously to the cases above, every element of type~A in $\D'$ is in an A-chain of $\D'$.
		Thus, Definition~\ref{def:semi-complete-commutative}\eqref{def:semi-complete-free} holds for \(\D'\).
		Therefore, \(\D'\) is an admissible decomposition of \(G\)
		such that \(\tau(\D')  = \tau(\D) - 2\), a contradiction to the minimality of \(\D\).	
		
		\smallskip	\noindent
		\textbf{Case} \(\mathbf{\btr\boldsymbol{(}T_2\boldsymbol{)} \boldsymbol{=} \bcv_2\boldsymbol{(}T_1\boldsymbol{)}}\)\textbf{.}
		Put \(T_0' = a_{0,0}a_{1,0}a_{2,0}a_{4,0}a_{3,0}a_{2,1}\),
		\(T_1' = a_{0,1}a_{1,1}a_{4,1}a_{3,1}a_{2,1}a_{2,2}\),
		\(T_2' = a_{0,2}a_{1,2}a_{2,2}a_{3,2}a_{4,2}a_{1,1}\) (see Figure~\ref{fig:case5-similar2}),
		let \(\D' = \big(\D\setminus\{T_0,T_1,T_2\}\big)\cup\{T_0',T_1',T_2'\}\),
		and let \(S' = T_3, \dots, T_{s-1}\).
		By Lemma~\ref{lemma:T'1-is-path}, \(T'_1\) is an element of type~C.		
		In what follows, we prove that \(T_1'\) and \(T_2'\) are paths.
		Since \(G\) is simple, we have \(a_{2,2}\notin \{a_{1,1},a_{2,1},a_{3,1},a_{4,1}\}\),
		and \(a_{1,1}\notin \{a_{2,2},a_{3,2},a_{4,2}\}\).
		By Lemma~\ref{lemma:no-cycles}, \(a_{2,2} \neq a_{0,1}\), and \(a_{1,1} \neq a_{0,2}\). 
		Therefore, $T_2'$ is a path.
		If \(a_{1,1} = a_{1,2}\),
		then \(a_{2,2}a_{1,2}\) and \(a_{2,1}a_{1,1}\) are two green in edges of \(a_{1,1}\).
		Therefore, $T_2'$ is a path.
		
		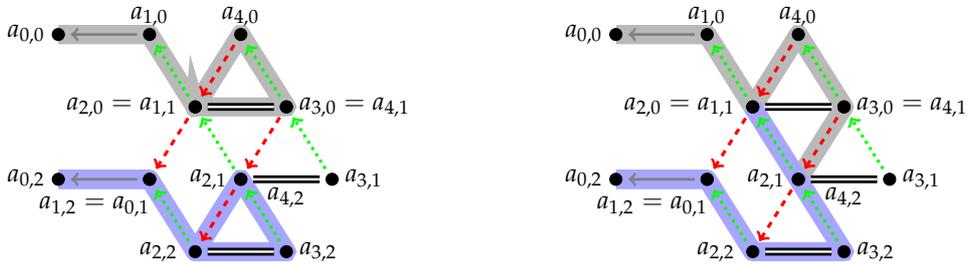
\begin{figure}[h]
			\centering
			\begin{subfigure}{.45\textwidth}
				\centering
				\scalebox{.8}{\begin{tikzpicture}[scale=1.5]
					
						\draw[fatpath,backcolor1] (-0.5,0.8) -- (-1.5,0.8) -- (-0.5,0.8) -- (0,0) -- (0,0) -- (1,0) -- (0.5,0.8) -- (0,0);
						\draw[fatpath,backcolor2] (-0.5,-0.8) -- (-1.5,-0.8)-- (-0.5,-0.8) -- (0,-1.6) -- (1,-1.6) -- (0.5,-0.8) -- (0,-1.6);
					
						\node (0) [black vertex] at (-1.5,0.8) {};
						\node (1) [black vertex] at (-0.5,0.8) {};
						\node (2) [black vertex] at (0,0) {};
						\node (3) [black vertex] at (1,0) {};
						\node (4) [black vertex] at (0.5,0.8) {};
						\node () []		 at ($(0,0)+(60:1)$) {};

						\node (4') [black vertex] at ($(1,0)$) {};
						\node (3') [black vertex] at ($(3)+(0.5,-0.8)$) {};	
						\node (2') [black vertex] at ($(2)+(0.5,-0.8)$) {};
						\node (1') [black vertex] at ($(0,0)$) {};
						\node (0') [black vertex] at ($(2')-(1,0)$) {};
						\node (5') [] at ($(2,0)$) {};

						\node (4'') [black vertex] at ($(2')$) {};
						\node (3'') [black vertex] at ($(3)-(0,1.6)$) {};	
						\node (2'') [black vertex] at ($(2)-(0,1.6)$) {};
						\node (1'') [black vertex] at ($(4'')-(1,0)$) {};
						\node (0'') [black vertex] at ($(1'')+(-1,0)$) {};
						\node (5'') [] at ($(2,0)$) {};

						\node () [] at ($(0)+(180:.35)$) {$a_{0,0}$};
						\node () [] at ($(1)+(90:.2)$) {$a_{1,0}$};
						\node () [] at ($(2)+(180:.15)-(0.65,0)$) {$a_{2,0}=a_{1,1}$};
						\node () [] at ($(3)+(0:.35)+(0.4,0)$) {$a_{3,0}=a_{4,1}$};
						\node () [] at ($(4)+(90:.2)$) {$a_{4,0}$};

						\node () [] at ($(0')+(180:.35)$) {};
						\node () [] at ($(1')+(240:.3)$) {};
						\node () [] at ($(2')+(180:.35)$) {$a_{2,1}$};
						\node () [] at ($(3')+(0:.35)$) {$a_{3,1}$};
						\node () [] at ($(4')+(-90:.3)-(0.4,0)$) {};

						\node () [] at ($(0'')+(180:.35)$) {$a_{0,2}$};
						\node () [] at ($(1'')+(-90:.3)-(0.6,0)$) {$a_{1,2}=a_{0,1}$};
						\node () [] at ($(2'')+(180:.4)$) {$a_{2,2}$};
						\node () [] at ($(3'')+(0:.35)$) {$a_{3,2}$};
						\node () [] at ($(4'')+(-90:.2)+(0.5,0)$) {$a_{4,2}$};

						\draw[line width=1.3pt,color=gray,<-] (0) -- (1);
						\draw[line width=1.5pt,dotted,color=green,->] (2) -- (1);
						\draw[line width=1.5pt,dotted,color=green,->] (3) -- (4);
						\draw[line width=1.5pt,dashed,color=red,->] (4) -- (2);
						\draw[line width=1.3pt,M edge] (2) -- (3);

						\draw[line width=1.5pt,dashed,color=red,<-] (0') -- (1');
						\draw[line width=1.5pt,dashed,color=red,->] (4') -- (2');
						\draw[line width=1.5pt,dotted,color=green,->] (2') -- (1');
						\draw[line width=1.5pt,dotted,color=green,->] (3') -- (4');
						\draw[line width=1.3pt,M edge] (2') -- (3');

						\draw[line width=1.3pt,color=gray,<-] (0'') -- (1'');
						\draw[line width=1.5pt,dashed,color=red,->] (4'') -- (2'');
						\draw[line width=1.5pt,dotted,color=green,->] (2'') -- (1'');
						\draw[line width=1.5pt,dotted,color=green,->] (3'') -- (4'');
						\draw[line width=1.3pt,M edge] (2'') -- (3'');
					\end{tikzpicture}
				}
			\end{subfigure}
			\begin{subfigure}{.45\textwidth}
				\centering
				\scalebox{.8}{\begin{tikzpicture}[scale=1.5]
					
						\draw[fatpath,backcolor1] (-0.5,0.8) -- (-1.5,0.8) -- (-0.5,0.8) -- (0,0) -- (0.5,0.8) -- (1,0) -- (0.5,-0.8);
						\draw[fatpath,backcolor2] (-0.5,-0.8) -- (-1.5,-0.8)-- (-0.5,-0.8) -- (0,-1.6) -- (1,-1.6) -- (0.5,-0.8) -- (0,0);
					
						\node (0) [black vertex] at (-1.5,0.8) {};
						\node (1) [black vertex] at (-0.5,0.8) {};
						\node (2) [black vertex] at (0,0) {};
						\node (3) [black vertex] at (1,0) {};
						\node (4) [black vertex] at (0.5,0.8) {};
						\node () []		 at ($(0,0)+(60:1)$) {};

						\node (4') [black vertex] at ($(1,0)$) {};
						\node (3') [black vertex] at ($(3)+(0.5,-0.8)$) {};	
						\node (2') [black vertex] at ($(2)+(0.5,-0.8)$) {};
						\node (1') [black vertex] at ($(0,0)$) {};
						\node (0') [black vertex] at ($(2')-(1,0)$) {};
						\node (5') [] at ($(2,0)$) {};

						\node (4'') [black vertex] at ($(2')$) {};
						\node (3'') [black vertex] at ($(3)-(0,1.6)$) {};	
						\node (2'') [black vertex] at ($(2)-(0,1.6)$) {};
						\node (1'') [black vertex] at ($(4'')-(1,0)$) {};
						\node (0'') [black vertex] at ($(1'')+(-1,0)$) {};
						\node (5'') [] at ($(2,0)$) {};

						\node () [] at ($(0)+(180:.35)$) {$a_{0,0}$};
						\node () [] at ($(1)+(90:.2)$) {$a_{1,0}$};
						\node () [] at ($(2)+(180:.15)-(0.65,0)$) {$a_{2,0}=a_{1,1}$};
						\node () [] at ($(3)+(0:.35)+(0.4,0)$) {$a_{3,0}=a_{4,1}$};
						\node () [] at ($(4)+(90:.2)$) {$a_{4,0}$};

						\node () [] at ($(0')+(180:.35)$) {};
						\node () [] at ($(1')+(240:.3)$) {};
						\node () [] at ($(2')+(180:.35)$) {$a_{2,1}$};
						\node () [] at ($(3')+(0:.35)$) {$a_{3,1}$};
						\node () [] at ($(4')+(-90:.3)-(0.4,0)$) {};

						\node () [] at ($(0'')+(180:.35)$) {$a_{0,2}$};
						\node () [] at ($(1'')+(-90:.3)-(0.6,0)$) {$a_{1,2}=a_{0,1}$};
						\node () [] at ($(2'')+(180:.4)$) {$a_{2,2}$};
						\node () [] at ($(3'')+(0:.35)$) {$a_{3,2}$};
						\node () [] at ($(4'')+(-90:.2)+(0.5,0)$) {$a_{4,2}$};

						\draw[line width=1.3pt,color=gray,<-] (0) -- (1);
						\draw[line width=1.5pt,dotted,color=green,->] (2) -- (1);
						\draw[line width=1.5pt,dotted,color=green,->] (3) -- (4);
						\draw[line width=1.5pt,dashed,color=red,->] (4) -- (2);
						\draw[line width=1.3pt,M edge] (2) -- (3);

						\draw[line width=1.5pt,dashed,color=red,<-] (0') -- (1');
						\draw[line width=1.5pt,dashed,color=red,->] (4') -- (2');
						\draw[line width=1.5pt,dotted,color=green,->] (2') -- (1');
						\draw[line width=1.5pt,dotted,color=green,->] (3') -- (4');
						\draw[line width=1.3pt,M edge] (2') -- (3');

						\draw[line width=1.3pt,color=gray,<-] (0'') -- (1'');
						\draw[line width=1.5pt,dashed,color=red,->] (4'') -- (2'');
						\draw[line width=1.5pt,dotted,color=green,->] (2'') -- (1'');
						\draw[line width=1.5pt,dotted,color=green,->] (3'') -- (4'');
						\draw[line width=1.3pt,M edge] (2'') -- (3'');
					\end{tikzpicture}
				}
			\end{subfigure}
			\caption{Exchange of edges between the three first elements in an open A-chain with at least four elements, in the case \(\tr(T_3) = \cv_2(T_2)\).}
			\label{fig:case5-similar2}
		\end{figure}
		
		To check that $\D'$ is an admissible decomposition first
		note that \(\hang_{\D'}(v) \geq \hang_{\D}(v)\) 
		for every \(v\in V(G)\setminus\{a_{3,0}, a_{3,1}, a_{3,2},a_{4,0}\}\),
		but since \(T_0\) is free, \(a_{3,0}, a_{3,1}, a_{3,2},a_{4,0}\) are not connection vertices in \(\D\),
		and hence \(a_{4,0}\) is not a connection vertex in \(\D'\).
		Thus, since $a_{2,2}$ and $a_{3,2}$ are not connection vertices in~$\D'$,
		the element $T_3$ is free.
		Therefore, \(S'\) is either an open chain or contains only one element,
		which is of type~C,
		and hence Definition~\ref{def:semi-complete-commutative}\eqref{def:semi-complete-commutative-hanging-edge} 
		holds for~\(\D'\). 
		Since, $T'_0$, $T'_1$ and $T'_2$ are paths, 
		Definition~\ref{def:semi-complete-commutative}\eqref{def:complete-commutative-types} holds for \(\D'\).	
		Analogously to the cases above, every element of type~A in $\D'$ is in an A-chain of $\D'$.
		Thus, Definition~\ref{def:semi-complete-commutative}\eqref{def:semi-complete-free} holds for \(\D'\).
		Therefore, \(\D'\) is an admissible decomposition of \(G\)
		such that \(\tau(\D')  = \tau(\D) - 3\), a contradiction to the minimality of \(\D\).
		This concludes the proof.
	\end{proof}
	
	Recall that a \(\{g,r\}\)-graph \(G\) is a \(5\)-regular graph that contains the Cayley graph \(X(\Gamma,S)\), where \(S = \{g,-g,r,-r\}\).
	Thus, since \(S\) is closed under taking inverses, \(G\) is also a \(\{g,-r\}\)-, \(\{-g,r\}\)-, \(\{-g,-r\}\)-graph,
	which yields the following corollary of Theorem~\ref{theorem:2g2r!=0}.
	
	\begin{corollary}\label{corollary:hard-equation}
		Every \(\{g,r\}\)-graph for which \(2g+2r \neq 0\) or \(2g-2r\neq 0\)
		admits a \(P_5\)-decomposition.
	\end{corollary}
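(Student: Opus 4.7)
The plan is essentially a reduction to Theorem~\ref{theorem:2g2r!=0}, exploiting the symmetry of the generator set $S=\{g,-g,r,-r\}$ under inversion. If $2g+2r\neq 0$, then Theorem~\ref{theorem:2g2r!=0} applies directly to $G$ viewed as a $\{g,r\}$-graph, and we are done. So the only case that requires work is when $2g+2r=0$ but $2g-2r\neq 0$.

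For that case, the key observation is that the Cayley graph $X(\Gamma,S)$ defined by $\{g,r\}$ is the \emph{same} graph as the Cayley graph defined by the generator $\{g,-r\}$, since its generating set is $\{g,-g,-r,-(-r)\}=\{g,-g,r,-r\}=S$. Hence $G$, being a $\{g,r\}$-graph, is simultaneously a $\{g,-r\}$-graph with the same base $2$-factorization (up to swapping the roles of in edges and out edges in $F_r$).

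The next step is to check that $\{g,-r\}$ is itself an SCG. Condition~(a) holds because $\{g,-r,2g,-2r\}$ is obtained from $\{g,r,2g,2r\}$ by replacing elements with their inverses, so $0$ is in one set iff it is in the other. Condition~(b), namely $g\notin\{-r,-(-r)\}=\{-r,r\}$, is identical to the corresponding condition for $\{g,r\}$. Condition~(c), $g+(-r)=(-r)+g$, follows from $g+r=r+g$ after adding $-r$ on both sides and using commutativity of $r$ with itself.

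With $G$ recast as a $\{g,-r\}$-graph, the hypothesis becomes $2g+2(-r)=2g-2r\neq 0$, so Theorem~\ref{theorem:2g2r!=0} yields the desired $P_5$-decomposition. There is no real obstacle here; the only subtlety is verifying cleanly that the SCG conditions transfer to $\{g,-r\}$ and that the Cayley graph is literally unchanged, so no structural hypothesis on $G$ is lost under the reinterpretation.
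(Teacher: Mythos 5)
Your argument is correct and is essentially the paper's own: the paper derives the corollary by noting that $S=\{g,-g,r,-r\}$ is closed under inverses, so $G$ is also a $\{g,-r\}$-graph, and Theorem~\ref{theorem:2g2r!=0} applied to that generator handles the case $2g-2r\neq 0$. Your explicit verification that $\{g,-r\}$ satisfies the SCG conditions and generates the same Cayley graph is exactly the (left implicit) content of that reduction.
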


	The main result of this paper is a straightforward consequence of Corollary~\ref{corollary:hard-equation} and Theorem~\ref{theorem:no-g2r2}.
	
	\begin{theorem}\label{thm:main}
		Every \(\{g,r\}\)-graph admits a \(P_5\)-decomposition.
	\end{theorem}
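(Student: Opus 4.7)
The plan is to dispatch Theorem~\ref{thm:main} by a simple dichotomy on the group identity satisfied by the generator $\{g,r\}$. Let $G$ be a $\{g,r\}$-graph; I want to produce a $P_5$-decomposition in every case. The two regimes have already been set up so as to cover all possibilities: the ``generic'' regime, where at least one of $2g+2r$ and $2g-2r$ is nonzero, and the ``degenerate'' regime, where both vanish.

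First, suppose that $2g+2r\neq 0$ or $2g-2r\neq 0$. Then Corollary~\ref{corollary:hard-equation} applies directly to $G$ and supplies a $P_5$-decomposition. (This is the case absorbed by the long admissibility argument: initial decomposition via Proposition~\ref{proposition:initial-decomposition}, reduction to A-chain structure via Lemma~\ref{lemma:P5-decomposition}, and destruction of the remaining trails via Theorem~\ref{theorem:2g2r!=0}, then the invariance of the SCG property under inverting $g$ or $r$ extends from $2g+2r\neq 0$ to the disjunction stated in the corollary.)

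Second, suppose that simultaneously $2g+2r=0$ and $2g-2r=0$. Then Theorem~\ref{theorem:no-g2r2} applies and yields an $M_{g,r}$-centered $P_5$-decomposition of $G$; in particular, $G$ admits a $P_5$-decomposition. Since the two cases are exhaustive, the theorem follows.

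There is no real obstacle here: the two earlier results have been deliberately tuned so that their hypotheses partition the parameter space of SCGs $\{g,r\}$. All of the graph-theoretic work has already been done, by Theorem~\ref{theorem:no-g2r2} for the degenerate $K_{4,4}$-factor case and by Theorem~\ref{theorem:2g2r!=0} together with Corollary~\ref{corollary:hard-equation} for the generic case. The proof of Theorem~\ref{thm:main} is essentially just the observation that these two cases together exhaust all $\{g,r\}$-graphs.
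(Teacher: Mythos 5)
Your proof is correct and is exactly the paper's argument: split on whether $2g+2r$ and $2g-2r$ both vanish, handling the degenerate case with Theorem~\ref{theorem:no-g2r2} and the remaining case with Corollary~\ref{corollary:hard-equation} (itself obtained from Theorem~\ref{theorem:2g2r!=0} via the symmetry $r\mapsto -r$). Nothing further is needed.
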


\section{Conclusion and future works} \label{sec:concluding}

In this paper, we verified Conjecture \ref{conj:favaron} for (i) 
$(2k +1)$-regular graphs containing a spanning $2k$-regular power of a cycle, 
and (ii) $5$-regular graphs containing special spanning $4$-regular Cayley graphs. 
We believe that the techniques developed here can be extended for a more general class of graphs, such as \emph{Schreier Coset Graphs} (see \cite{Gross77}).

Let $G$ be a group and let $H$ be a subgroup of $G$. 
For $s \in G$, the \emph{right coset} of $H$ corresponding to $s$ is the set $Hs=\{hs\colon h \in H\}$. 
\emph{Left cosets} can be defined analogously. 
Let $g_1,\dots,g_r$ be a sequence in $G$ whose members generate $G$, the \emph{Schreier Right Coset Graph} (SRCG) is defined as follows. 
Its vertex set is the set of right cosets of $H$ in $G$, 
for each coset $H'$ and each generator $g_i$ there is an edge from $H'$ to the right coset $H'g_i$.
In particular, a Cayley graph is an SRCG where $H=\{0\}$. 
Schreier coset graphs are generalization of Cayley ``color'' graphs using cosets of some specified subgroup as vertices instead of group elements. 
In 1977, Gross~\cite{Gross77} showed that every connected regular graph of even degree is an SRCG. 
This implies that, if we extend our result for 5-regular graphs that contain any spanning $4$-regular SRCG, 
then we verify the conjecture for $k=2$.

Finally, we can also explore others graphs containing special spanning Cayley graphs. For instance,  a natural step is to examine 7-regular graphs containing a spanning 4- or 6-regular Cayley graph. Also, note that the definitions of simple commutative generator and $\{g,r\}$-graph are equivalent to Cayley graphs under the restriction of the equation $g+r = r+g$ for every pair of generators. Therefore, we plan to explore other restrictions, such as $g+r \neq r+g$, which would extend our result for 5-regular graphs containing every spanning 4-regular Cayley graph.

\section*{Acknowlegments}
This study was financed in part by the Coordenação de Aperfeiçoamento de Pessoal de Nível Superior - Brasil (CAPES) - Finance Code 001.
F. Botler is partially supported by CNPq (Grant 423395/2018-1) and by FAPERJ (Grant 211.305/2019).

	\bibliographystyle{amsplain}

\end{document}